\documentclass[preprint,twoside,12pt]{amsart}

\usepackage{amssymb,color,enumerate,amscd,graphicx}
\usepackage{amsmath,amsfonts,amsopn,amsthm}
\usepackage{subfigure,float}
\usepackage{pdfsync}
\usepackage[top=2cm,bottom=4cm,left=2.0cm,right=2cm,asymmetric]{geometry}

\newtheorem{theorem}{Theorem}
\newtheorem{lemma}[theorem]{Lemma}

\newtheorem{prop}[theorem]{Proposition}
\theoremstyle{definition}
\newtheorem{definition}[theorem]{Definition}

\newtheorem{ass}{Assumption}
\theoremstyle{remark}
\newtheorem{remark}{Remark}[section]

\numberwithin{equation}{section}


\newcommand{\new}{\newcommand}

\providecommand{\nor}[1]{\lVert{#1}\rVert}
\providecommand{\abs}[1]{\lvert{#1}\rvert}
\providecommand{\set}[1]{\{#1\}}
\providecommand{\scal}[2]{\left\langle{#1},{#2}\right\rangle}

\providecommand{\det}[1]{\operatorname{det}(#1)}

\providecommand{\dim}[1]{\operatorname{dim}(#1)}

\new{\R}{\mathbb R}
\new{\C}{\mathbb C}
\new{\N}{\mathbb N}
\new{\Z}{\mathbb Z}

\new{\hh}{\mathcal H}
\new{\kk}{\mathcal K}

\new{\la}{\lambda}
\new{\eps}{\epsilon}

\newcommand{\intl}{\int\limits}

\def\Spdr{Sp(d,\R)}
\def\Symdr{\rm Sym(d,\R)}
\def\Gldr{{\rm GL(d,\R)}}

\begin{document}

\title[Admissible
  vectors.]{Reproducing subgroups  of Sp(2,R).\\ Part II:  admissible
  vectors.}

\author{G.~Alberti}
\address{G. Alberti, Mathematical Institute\\ 24-29 St Giles'  Oxford, OX1 3LB, UK}
\email{Giovanni.Alberti@maths.ox.ac.uk }
\author{F.~De~Mari}
\address{F. De Mari, DIMA\\
Via Dodecaneso, 35, 16146 Genova, Italy}
\email{demari@dima.unige.it}
\author{E.~De Vito}
\address{E. De Vito, DIMA\\Via Dodecaneso, 35
\\16146 Genova, Italy}
\email{devito@dima.unige.it}
\author{L.~Mantovani}
\address{L. Mantovani, DIMA\\Via Dodecaneso, 35
\\16146 Genova, Italy}
\email{mantovani@dima.unige.it}

\begin{abstract} In part~I we introduced the class ${\mathcal E}_2$ of
  Lie subgroups of $Sp(2,\R)$ and obtained a classification up to
  conjugation (Theorem~1.1). Here, we determine for which of these
  groups the restriction of the metaplectic representation gives rise
  to a reproducing formula. In all the positive cases we characterize
  the admissible vectors with a generalized Calder\'on equation. They
  include products of 1D-wavelets, directional wavelets, shearlets,
  and many new examples.
\end{abstract}

\subjclass[2010]{Primary: 42C40,43A32, 43A65 }

\maketitle

\section{Introduction}

In this paper we complete the program begun in Part~I, where we introduced the class  ${\mathcal E}_2$ of Lie subgroups of $Sp(2,\R)$, consisting of semidirect products $\Sigma\rtimes H$, where the normal factor $\Sigma$ is a vector space of $2\times2$ symmetric matrices and $H$ is a connected Lie subgroup of $GL(2,\R)$. We then obtained a classification up to $Sp(2,\R)$-conjugation, which is restated below in Theorem~\ref{THELIST} in more explicit notation. Here we prove  which of the groups in the class ${\mathcal E}_2$ are reproducing,
namely for which of them there exists an admissible vector $\eta\in L^2(\R^2)$ such that
  \begin{equation}
\nor{f}^2= \intl_G\abs{\scal{f}{U_g\eta }}^2\,dg 
\label{eq:2}
  \end{equation}
holds for every $f\in L^2(\R^2)$, where $dg$ is a fixed left Haar measure on $G$, and $U$ is the restriction to $G$ of the metaplectic representation. This is the content of Proposition~\ref{prop:not_repr} and of Theorem~\ref{main}.  

Secondly, and most importantly,  we want to describe all the admissible vectors in the reproducing cases. Both problems are solved by means of the general theorems proved in \cite{dede10}. We also prove an auxiliary result concerning what we call {\it orbit equivalence}, a notion designed in order to find conditions under which two groups that are {\it not} conjugate within $Sp(2,\R)$ (hence  not equivalent according to the  general representation-theoretic principles that are relevant in this context) do exhibit the same analytic features, in the sense that they have coinciding sets of admissible vectors. Orbit equivalence, which is an analytic condition, allows us to treat several families in a concise way. In some sense, it should  be thought of as a version of a change of variables. It is worthwile observing that all but one families depending on a parameter (precisely: those listed as (2D.1), (2D.2), (2D.3), (3D.4) together with (3D.5), (3D.6) and (3D.7)) are shown to consist of orbitally equivalent groups. The exception is notably the family (4D.4) which is the family of shearlet groups, indexed by the parameter $\alpha\in(-1,0]$ (the case $\alpha=0$ corresponding to the usual shearlets with parabolic scaling), for which we show that no orbit equivalence can possibly exist.

 In the end, the most interesting new phenomenon is perhaps the appearence of four non-conjugate classes of two-dimensional groups. Each of them is isomorphic to the affine group, but the metaplectic representation restricted to each of them is of course highly reducible and clearly not equivalent to the standard wavelet representation, the most obvious difference being that the former analyses two-dimensional signals and the latter acts on $L^2(\R)$.
 
 A broader discussion concerning the r\^ole of the metaplectic representation and of the groups in  ${\mathcal E}_2$ in signal analysis is to be found in Part~I. Other significant contributions in this circle of ideas are
 contained in the recent papers \cite{kingczaja12} and \cite{kampanat12}.

The paper is organised as follows. In the rather long Section~\ref{notation}, after introducing some notation, we review the results contained in \cite{dede10} in some detail. In Section~\ref{orbitequivalence} we discuss orbit maps and introduce the notion of orbit equivalence, which will be used later. Here the main result is 
Theorem~\ref{OE}. Section~\ref{AV} contains the classification result and the equations for the admissible vectors.

\section{Notation and known results}\label{notation}

In this section we recall the main results concerning the 
voice transform associated to the metaplectic representation restricted to a
suitable class of Lie subgroups of $Sp(2,\R)$. We review rather thoroughly the results contained in \cite{dede10} because we need a much simpler formulation here, which would not be  plain to infer from \cite{dede10} at first reading.

\subsection{Notation} Given a locally
compact second countable space $X$, $C_c(X)$ is the
space of continuous complex functions with compact support. A Radon
measure $\nu$ on $X$ is a positive
measure defined on the Borel $\sigma$-algebra of $X$ and finite on compact
subsets, and $L^2(X,\nu)$ is the Hilbert space of the complex
functions on $X$ which are  square
integrable  with respect to $\nu$. The  norm and the scalar product of $L^2(X,\nu)$
are denoted by
$\nor{\cdot}_{\nu}$ and $\scal{\cdot}{\cdot}_{\nu}$, respectively, and  if
no confusion arises the dependence on $\nu$ is omitted.

\subsection{Reproducing groups of $\Spdr$.}
Given an integer $d\geq 1$, we denote by $\Spdr$ the symplectic group acting on $\R^{2d}$ and by $\mu$ the metaplectic
representation of $\Spdr$ (see for example \cite{fol89}). We recall that $\mu$ is a projective, unitary, strongly continuous representation of $\Spdr$ acting on $L^2(\R^d,dx)$,  where $dx$ is
the Lebesgue measure of $\R^d$. Given a Lie subgroup $G$ of $\Spdr$, we denote by $dg$ a left Haar
measure of $G$, by $\Delta_G$ its modular function, and by $U$ the restriction of $\mu$ to $G$.

Fix now a Lie subgroup $G$ of $\Spdr$. Regarded the space $L^2(\R^d,dx)$ as the set
of \mbox{$d$-dimensional} signals, we  introduce the voice transform associated to
the representation $U$ of  $G$.  Recall that a voice transform is obtained by
choosing an analysing function $\eta\in L^2(\R^d,dx)$, and then
defining for any signal $f\in L^2(\R^d,dx)$  the continuous function 
\[ 
V_\eta f:G\to\C,\qquad V_\eta f(g)=\scal{f}{U_g\eta}.
\]
For an arbitrary $G$ it may well happen that  $V_\eta f$ is not in
$L^2(G,dg)$. The following definition  selects the family of subgroups for
which the voice transform becomes an isometry.
Notice that we do not require $U$ to be irreducible.
\begin{definition}
A Lie subgroup $G$ of $\Spdr$ is {\em a reproducing group} if
  there exists $\eta\in L^2(\R^d,dx)$ such that formula \eqref{eq:2} holds  for every $f\in L^2(\R^d,dx)$.
The function $\eta$ is called an {\em admissible
    vector} for $G$.
\end{definition}
Under these circumstances, $V_\eta f$ is in $L^2(G,dg)$ and the  analysis operator
$f\mapsto V_\eta f$ is an isometry from $L^2(\R^d,dx)$ into
$L^2(G,dg)$ intertwining $U$ with the left regular
representation of $G$.  Furthermore,
the following reproducing formula holds true  for
all $f\in L^2(\R^d,dx)$
\begin{equation}
  \label{eq:1}
f =\intl_ G \scal{f}{U_g\eta}\, U_g\eta \ dg,
\end{equation}
where the integral must be interpreted in the weak sense.

\begin{remark}\label{conjugate} Take  a reproducing group $G$  and $g\in\Spdr$.  It is 
well-known that the conjugate group $gGg^{-1}$ is
  reproducing, too.  Furthermore, if $\eta\in L^2(\R^d,dx)$ is
  an admissible vector for $G$, then $\mu_g\eta$ is an admissible
  vector for $gGg^{-1}$.
\end{remark}

\subsection{The class $\mathcal E$} In this paper, we consider subgroups of $\Spdr$ whose elements are
``triangular'' $d\times d$-block matrices.  For a discussion on this choice the reader is referred to Part~I. 
\begin{definition}
A Lie subgroup $G$ of $\Spdr$ belongs to the class $\mathcal E$  if
it is of the form
  \begin{equation*}
    G=\Bigl\{\begin{bmatrix}h & 0 \\\sigma h& ^th^{-1} \end{bmatrix}: h\in
    H,\ \sigma\in \Sigma\Bigr\},
  \end{equation*}
  where $H$ is a connected Lie subgroup of $\Gldr$ and $\Sigma$ is a  subspace of the space $\Symdr$ of 
  $d\times d$ symmetric matrices. We further require that both $\Sigma$ and $H$ are not trivial.
Whenever needed, we write $\mathcal E_d$ to specify the size.\end{definition}
  
  In order for $G$ to be a group it is necessary and sufficient that 
  $h^\dag[\Sigma]=\Sigma$ for all  $h\in H$, where
  \begin{equation}
h^\dag[\sigma]={^th}^{-1}\sigma h^{-1}.
\label{dagger1}
\end{equation}
If $G\in\mathcal E$, both $\Sigma$ and $H$ are naturally identified as Lie
subgroups of $G$. Clearly,  $\Sigma H=G$, $\Sigma\cap H=\set{e}$, $\Sigma$ is
a normal subgroup of $G$ and it is invariant under the action
of $H$ given by~\eqref{dagger1}, so that $G$ is the semi-direct product $\Sigma\rtimes H$.

 For the remaining part of this section, we fix a group $G=\Sigma\rtimes H$ in the class
 $\mathcal E$.
A left Haar measure and the modular function of $G$ are
\begin{equation}\label{eq:deltaG}
dg= \chi(h)^{-1}d\sigma dh\qquad
\Delta_G(\sigma,h)=\chi(h)^{-1}\Delta_H(h),
\end{equation}
where $d\sigma$ is a Haar measure of $\Sigma$, $dh$ is a left Haar
measure of $H$, $\Delta_H$ is the modular function of $H$, and $\chi$ is the positive character of $H$ given, for all $h\in H$, by
\begin{equation}\label{eq:alpha}
\chi(h)=\abs{\det{\sigma\mapsto h^\dag[\sigma]}}.
\end{equation} 
We denote by $\Sigma^*$ the dual of $\Sigma$. The contragredient action of \eqref{dagger1} is then given by
\begin{equation}
h[\sigma^*](\sigma)=\sigma^*(({h^{-1}})^{\dagger}[\sigma])=\sigma^*(\,^th\sigma h),
\qquad
\sigma^*\in\Sigma^*, \sigma\in\Sigma, h\in H.
\label{semidirect}
\end{equation}
Since the action \eqref{semidirect} will play a more relevant r\^ole than the action \eqref{dagger1},
we have chosen the simpler notation for the former.
\subsection{The representation $U$} The
restriction $U$ of the metaplectic representation to $G\in\mathcal E$ is completely
characterized by a ``symbol'' $\Phi$, as we now explain.
In its standard form, $U$ acts on $f\in L^2(\R^d)$ by
\begin{equation}
U_{(\sigma,h)}f(x) 
= \beta(h)^{-\frac12}\,e^{\pi i\scal{\sigma x}{x}}f(h^{-1}x),\qquad\text{a.e. }x\in\R^d,
\end{equation}
where $\beta$ is the positive character  of $H$ given by
\[
\beta(h)=\abs{\det{h}}.
\]
Now, given $x\in\R^d$,  the map $\sigma\mapsto -\frac{1}{2}\,\scal{\sigma
  x}{x}$ is a linear functional on $\Sigma$ and hence it defines a unique element
$\Phi(x)\in\Sigma^*$ by the requirement that for all $\sigma\in\Sigma$
\begin{equation}\label{eq:defPhi}
{\Phi(x)}(\sigma)=-\frac{1}{2}\,\scal{\sigma x}{x}_{\R^d}.
\end{equation}
The corresponding function $\Phi: \R^d\to \Sigma^*$  has a fundamental
invariance property. Indeed, noting that the group $H$ acts naturally
on $\R^d$ by means of
\[ h.x=hx\qquad x\in\R^d,\ h\in H,\]
it follows that for all $x\in \R^d$ and $h\in H$,
\begin{equation}
\Phi(h.x)=h[\Phi(x)].\label{PHI}
\end{equation}
This is seen by observing that for all $\sigma\in\Sigma$ we have
\[  
\Phi(h.x)(\sigma)
  =-\frac{1}{2}\,\scal{{^th}\sigma h x}{x}_{\R^d} 
  =\Phi(x)((h^{-1})^\dagger[\sigma])
  = h[\Phi(x)](\sigma).
\]
Therefore, for  $\sigma\in \Sigma$ and  $h\in H$, by
\begin{equation*}
U_{(\sigma,h)}f(x) 
 =\beta(h)^{-\frac12}\,e^{-2\pi i \Phi(x)(\sigma)}f(h^{-1}x),
\end{equation*}
which exhibits $U$ as a representation of the kind considered in \cite{dede10}, with the quadratic symbol~$\Phi$.

\subsection{More notation} In the examples, we often parametrise the $n$-dimensional
  vector space $\Sigma$ by selecting a  basis $\{\sigma_1,\ldots,\sigma_n\}$. Clearly
  \begin{equation*}
  \Sigma=\set{\sigma(u):=u_1\sigma_1+\ldots+u_n\sigma_n\,:\, u=(u_1,\ldots,u_n)\in\R^n}.  
  \end{equation*}
With this choice, we fix the Haar measure $d\sigma$ on $\Sigma$ as the push-forward
of the Lebesgue measure under the linear isomorphism $u\mapsto
\sigma(u)$. By means of this choice we also identify $\Sigma^*$ with $\R^n$, that is, if 
 $\{\sigma_1^*,\ldots,\sigma_n^*\}$ is the dual basis, then we map 
 \[
 y\mapsto \sigma^*(y):=y_1\sigma_1^*+\ldots+y_n\sigma_n^*.
\]
The Haar measure $d\sigma^*$ on $\Sigma^*$ will be the push-forward
of the Lebesgue measure under the linear isomorphism $y\mapsto\sigma^*(y)$.
Furthermore, for $h\in H$ the action~\eqref{semidirect}
defines the matrix $M_h\in\operatorname{GL}(n,\R)$  in the chosen basis, that is 
\[
h^\dag[\sigma(u)]= \sigma(M_hu)\qquad\iff\qquad
h^\dag[\sigma_j]=\sum_i \sigma_i (M_h)_{ij}\quad j=1,\ldots,n,
\]
and the dual action~\eqref{semidirect} becomes the contragredient of
$M_h$, namely
\[
h[\sigma^*(y)]=\sigma^*(^t\!M^{-1}_hy).
\]
The quadratic map $\Phi$ given by~\eqref{eq:defPhi} defines
$n$-smooth maps $\varphi_1,\ldots,\varphi_n:\R^d\to\R$ by
\[
\Phi(x)(\sigma(u))=-\frac12\scal{\sigma(u)x}{x}_{\R^d}=u_1\varphi_1(x)+\ldots+u_n\varphi_n(x)
\quad\iff\quad 
\varphi_i(x)=-\frac12\scal{\sigma_1x}{x}_{\R^d}.
\]
 In the following, with slight abuse of notation, we
write
\begin{equation}
h^\dag[u]:=M_hu,
\qquad
h[y]:={}^t\!M^{-1}_h y.
\label{TYaction}
\end{equation}
Finally, we denote by $D\Phi(x)$ the $n\times d$ Jacobian matrix 
\[
D\Phi(x)=\begin{smallmatrix}
  \dfrac{\partial(\varphi_1,\ldots,\varphi_n)}{\partial(x_1,\ldots,x_d)}
\end{smallmatrix}
\]
and by $J\Phi(x)= \sqrt{\det D\Phi(x)\,^tD\Phi(x)}$ the Jacobian determinant. 
With this notation, the critical points of $\Phi$ are precisely the solutions of
the equation $J\Phi(x)=0$. The set of critical points
is invariant under any change of basis in $\Sigma$, whereas the Haar
measure $d\sigma$ and $J\Phi$ will change by a positive constant, making the
condition~\eqref{eq:1} on the admissible vectors invariant.

In the next two sections we recall the main results of \cite{dede10}  applied to our setting.

\subsection{Geometric characterization}
In what follows, we need a technical assumption that  is necessary 
in order to avoid pathological phenomena and it is
verified in all our examples.  To state it, we identify $\R^n$with $\Sigma^*$ under
the map $y\leftrightarrow\sigma^*(y)$ and hence we regard $\R^n$
 as an $H$-space with respect to the second
action in~\eqref{TYaction}. For any $y\in\R^n$, we denote by
$H[y]=\set{h[y]\in{\R^n}: h\in H}$ the
corresponding orbit and by $H_y=\set{h\in H: h[y]=y}$ the stability subgroup.
The following assumption will be made throughout the remaining sections.
\begin{ass}\label{H1}
For every $y\in{\R^n}$, the orbit  $H[y]$ is locally closed in ${\R^n}$.
\end{ass}

The first result gives some necessary conditions in order that $G$ is
a reproducing group. These conditions become sufficient if the stabilizers are compact.
\begin{theorem}\label{th:suffnec}
Take $G=\Sigma\rtimes H\in\mathcal E$.
If  $G$ is a reproducing group, then
 \begin{enumerate}[i)]
\item\label{2} $G$ is non-unimodular;
\item\label{1} $\dim{\Sigma}\leq d$;
\item\label{3} the set of critical points of $\Phi$, which is an $H$-invariant
  closed subset of $\R^d$, has zero Lebesgue measure.
\end{enumerate}
Furthermore, if $\dim{\Sigma}=d$, then
\begin{enumerate}[iv)]
\item for almost every $y\in \Phi(\R^d)$ the stability
  subgroup $H_y$ is compact.
\end{enumerate}
Conversely, if  \ref{2}), \ref{1})  \ref{3}) and iv) 
(without assuming $\dim{\Sigma}=d$) hold true, then  $G$
is reproducing.
\end{theorem}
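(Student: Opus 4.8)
The plan is to derive the statement from the general characterisation of reproducing representations established in \cite{dede10}, specialising its abstract hypotheses to the geometric data carried by the quadratic symbol $\Phi$. The point of departure is the observation, already made above, that $U$ is exactly of the form treated there: the abelian factor $\Sigma$ acts by the unitary characters $\sigma\mapsto e^{-2\pi i\,\Phi(x)(\sigma)}$, so that $\Phi\colon\R^d\to\Sigma^*\cong\R^n$ is the orbit map, while $H$ acts on $\Sigma^*$ by the contragredient $y\mapsto h[y]$ of~\eqref{TYaction}. Under Assumption~\ref{H1} the orbit space is standard, and the abstract theory disintegrates $L^2(\R^d)$ over the $H$-orbits in $\Sigma^*$ by pushing $\abs{f(x)}^2\,dx$ forward through $\Phi$; the admissibility of $\eta$ is then encoded by a Calder\'on-type equation living on the generic orbit. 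Each of \ref{2}), \ref{1}), \ref{3}) and iv) will be obtained by rewriting one ingredient of that equation in terms of $\Phi$.

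For the necessity part I would argue as follows. Condition \ref{2}) is the usual Duflo--Moore obstruction: a reproducing formula \eqref{eq:2} forces the relevant semi-invariance weight to be non-trivial, so $\Delta_G\neq1$, which is read off the explicit modular function in~\eqref{eq:deltaG}. For \ref{1}) and \ref{3}) the key is that no Calder\'on equation can hold unless the push-forward $\Phi_*(\abs{f}^2\,dx)$ is absolutely continuous on $\Sigma^*$, which by the coarea formula requires $\Phi$ to be a submersion off a Lebesgue-null set and brings in $J\Phi$ as a weight. Since $D\Phi(x)$ is an $n\times d$ matrix, this is impossible when $n>d$, for then $J\Phi\equiv0$ and \emph{every} point of $\R^d$ is critical; this yields \ref{1}), and \ref{3}) is precisely the a.e.\ submersivity just described. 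Finally, when $\dim{\Sigma}=d$ the map $\Phi$ is a local diffeomorphism away from the null critical set, so its fibres are discrete and $\Phi(\R^d)$ is open; on each orbit the abstract admissibility condition reduces to a Calder\'on equation on $H/H_y$, and for this to possess a finite-energy solution the orbit must support a relatively invariant measure of the correct homogeneity, which together with \ref{2}) forces $H_y$ to be compact for almost every $y$, giving iv).

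For the converse I would invoke the sufficiency half of the theorem in \cite{dede10}. Assuming \ref{2}), \ref{1}), \ref{3}) and iv), the null critical set guarantees that $\Phi$ fibres $\R^d$ over $\Phi(\R^d)$ with an absolutely continuous push-forward, while compactness of the generic stabiliser ensures that the generic orbit carries a genuine relatively invariant measure. These are exactly the inputs under which the orbital Calder\'on equation is solvable, and an admissible $\eta$ is then built fibrewise by prescribing its $\Phi$-profile on a measurable section of the orbit space and normalising against $J\Phi$ and the Duflo--Moore density; the reproducing formula~\eqref{eq:1} follows.

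The main obstacle is the faithful dictionary between the representation-theoretic admissibility condition of \cite{dede10}, phrased through Plancherel/Duflo--Moore data on the dual orbits, and the purely differential-geometric conditions \ref{3}) and iv) on $\Phi$. Concretely, the delicate point is the equivalence, valid when $\dim{\Sigma}=d$, between compactness of the generic stabiliser and solvability of the orbital Calder\'on equation, together with the careful measure-theoretic bookkeeping on the critical set and on the lower-dimensional, non-generic orbits, where the disintegration could fail to be well behaved; this is where the real work of the proof is concentrated.
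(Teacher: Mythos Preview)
Your approach is essentially the paper's own: both proofs consist in invoking the appropriate results from \cite{dede10} and checking that the hypotheses there translate into the conditions \ref{2}), \ref{1}), \ref{3}), iv). The paper's proof is a bare list of citations (Theorem~1 and Lemma~2 for \ref{1}) and \ref{3}), Proposition~5 for \ref{2}), Theorem~10 for iv), Theorem~9 for the converse), and your sketch is a narrative version of the same logic.

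There is one substantive discrepancy worth flagging. You justify \ref{2}) as ``the usual Duflo--Moore obstruction,'' suggesting that non-unimodularity follows from general principles about square-integrable representations. This is not the argument the paper uses, and in general it is not true that a reproducing group must be non-unimodular. The paper explicitly records that Proposition~5 of \cite{dede10} applies \emph{because} $\Phi$ is quadratic and the action $x\mapsto h.x$ is linear; these structural features are what force the weight to be nontrivial in this setting. Your sketch omits this input, so as written the justification of \ref{2}) has a gap: you would need to explain why, for this specific class of representations, the semi-invariance weight cannot be trivial, and that explanation is precisely the content of the cited Proposition~5 together with the quadratic/linear hypotheses.
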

\begin{proof} Hereafter, all the cited results refer
  to~\cite{dede10}.  Assume that $G$ is reproducing. Theorem~1 together with Lemma~2 
implies~\ref{1}) and \ref{3}). Since $\Phi$ is a
 quadratic map and the action $x\mapsto h.x$ is linear, Proposition~5
 gives~ \ref{2}). Finally, if $\dim{\Sigma}= d$, Theorem~10 yields~iv).

Conversely, Theorem~9, where  \ref{1}) is
understood, proves that $G$ is a reproducing group.
\end{proof}
\subsection{Analytic conditions}\label{sub:analytic}
Because of Theorem~\ref{th:suffnec}, in this section we assume the existence of an open $H$-invariant
subset $X\subset\R^d$ with negligible complement, whence $L^2(X,dx)=L^2(\R^d,dx)$, such that the Jacobian of $\Phi$ is strictly
positive on $X$. As a consequence, $Y:=\Phi(X)$ is an \mbox{$H$-invariant}
open set of $\R^n$, for every $y\in Y$ the level set  $\Phi^{-1}(y)$ is
a Riemannian submanifold (with Riemannian measure $dv_y$) and the
Lebesgue measure $dy$ of $\R^n$, restricted to $Y$, is a
pseudo-image measure of the Lebesgue measure $dx$ of $X$ (see Lemma~2
in \cite{dede10}).

The next result is based on classical disintegration
formul\ae \ of measures. We refer to \cite{bourbaki_int} for the general
theory. A short account is given in Appendix~A of \cite{dede10}.

The coarea formula \cite{Federer69} yields the  disintegration \eqref{eq:12} of the Lebesgue measure on $X$.
\begin{prop}\label{prop:dis1}
  There exists a unique family $\{\nu_y\}_{y\in Y} $ of Radon
  measures on $X$ such that:
  \begin{enumerate}[a)]
  \item for every $y\in Y$ the measure $\nu_y$ is concentrated
    on $\Phi^{-1}(y)$;
  \item for every $\varphi\in C_c(X)$
    \begin{equation}
      \int_X
      \varphi(x)d\nu_y(x)=\int_{\Phi^{-1}(y)}\varphi(x)  \frac{
        dv_y(x)}{(J\Phi)(x)};\label{eq:11}
    \end{equation}
  \item for every $\varphi\in C_c(X)$
    \begin{equation}
      \int_X
      \varphi(x)dx=\int_Y\Bigl(\int_X
      \varphi(x)d\nu_y(x)\Bigr)dy;\label{eq:12}
    \end{equation}
  \item for every $\varphi\in C_c(X)$ and $h\in H$
    \begin{equation}
      \int_X
      \varphi(h^{-1}.x)d\nu_{h[y]}(x)=\chi(h)\beta(h)\int_X\varphi(x)d\nu_y(x).\label{eq:13}
    \end{equation}
  \end{enumerate}
\end{prop}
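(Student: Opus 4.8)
The plan is to \emph{define} the candidate family by the formula in b) and then verify the remaining properties, postponing the genuinely analytic point to the very end. Concretely, for each $y\in Y$ set
\[
\nu_y(\varphi)=\int_{\Phi^{-1}(y)}\varphi(x)\,\frac{dv_y(x)}{(J\Phi)(x)},\qquad \varphi\in C_c(X).
\]
Since $\Phi$ is smooth and $J\Phi>0$ on the open set $X$, the weight $1/J\Phi$ is continuous and bounded on every compact subset of $X$; as $\Phi^{-1}(y)$ is a submanifold carrying the locally finite Riemannian measure $dv_y$, the functional $\nu_y$ is a well-defined Radon measure, concentrated on $\Phi^{-1}(y)$ by construction. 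This gives a) and b) at once, and also makes the uniqueness assertion immediate: b) prescribes each $\nu_y$ explicitly, so any family meeting a)--d) coincides with this one. This is moreover consistent with the general uniqueness of disintegrations relative to a pseudo-image measure recorded in \cite{bourbaki_int}, since $dy$ is a pseudo-image of $dx$ under $\Phi$.

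Property c) is then exactly Federer's coarea formula \cite{Federer69}. Applying it to $g=\varphi/J\Phi\in C_c(X)$ gives
\[
\int_X\varphi(x)\,dx=\int_X g(x)\,(J\Phi)(x)\,dx=\int_Y\Bigl(\int_{\Phi^{-1}(y)} g(x)\,dv_y(x)\Bigr)dy=\int_Y\Bigl(\int_X\varphi\,d\nu_y\Bigr)dy,
\]
which is precisely c); the strict positivity of $J\Phi$ on $X$ is what lets us divide by it and recover $\nu_y$.

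The substantive point is the equivariance d), which I would derive from c) by a change of variables rather than by a direct computation on the level sets. Replacing $\varphi$ with $x\mapsto\varphi(h^{-1}.x)$ in c), substituting $x\mapsto h.x$ in the integral over $X$ (Jacobian $\abs{\det h}=\beta(h)$) and $y\mapsto h[y]={}^t\!M_h^{-1}y$ in the integral over $Y$ (Jacobian $\abs{\det M_h}^{-1}=\chi(h)^{-1}$), and using $\Phi(h.x)=h[\Phi(x)]$ from \eqref{PHI}, one is led to
\[
\int_Y\Bigl(\chi(h)\beta(h)\int_X\varphi\,d\nu_y-\int_X\varphi(h^{-1}.x)\,d\nu_{h[y]}(x)\Bigr)dy=0
\]
for all $\varphi\in C_c(X)$ and $h\in H$. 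I expect the main obstacle to lie exactly here: this integrated identity only forces d) for \emph{almost every} $y$, whereas the statement requires it for \emph{every} $y$. The cleanest resolution is to observe that both the family $\{\chi(h)\beta(h)\,\nu_y\}_y$ and the family of pushforwards of $\nu_{h[y]}$ under $x\mapsto h^{-1}.x$ are, by \eqref{PHI}, concentrated on the fibre $\Phi^{-1}(y)$ and disintegrate one and the same measure on $X$, so uniqueness of disintegration \cite{bourbaki_int} gives equality for $dy$-a.e.\ $y$; the smoothness of $\Phi$ then makes $y\mapsto\int_X\psi\,d\nu_y$ continuous for each $\psi\in C_c(X)$, which upgrades the identity to all of $Y$. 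A direct verification of d) from b) is also possible, but would require controlling how the Riemannian measure $dv_y$ transforms under the non-isometric map $x\mapsto h.x$ between distinct level sets — precisely the bookkeeping that the coarea route sidesteps by keeping only the global Jacobian factors $\beta(h)$ and $\chi(h)$.
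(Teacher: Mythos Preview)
Your proposal is correct and follows precisely the route the paper indicates: the paper does not supply a proof but simply cites the coarea formula \cite{Federer69} for the disintegration~\eqref{eq:12} and refers to \cite{bourbaki_int} and Appendix~A of \cite{dede10} for the general theory, which is exactly the machinery you invoke. Your derivation of d) via change of variables in c), followed by the upgrade from almost-everywhere to everywhere using the continuity of $y\mapsto\int_X\psi\,d\nu_y$ (a property the paper itself relies on later, in the proof of Lemma~\ref{measurenu}, citing Theorem~2 of \cite{dede10}), is the standard and intended argument.
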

The general theory of disintegration of measures gives that
\begin{equation}
L^2(X,dx) =\int_Y  L^2(X,\nu_y) dy,
\label{eq:6}
\end{equation}
where the direct  integral  of the family of Hilbert spaces
$\{L^2(X,\nu_y) \}_{y\in Y}$ is taken with respect to the
measurable structure defined by $C_c(X)\subset L^2(X,\nu_y)$ (see
\cite{dix57}  for the  theory of direct integrals and Proposition~6 in \cite{dede10} for \eqref{eq:6}). Accordingly,
for any $f\in L^2(X,dx)$, we write 
\[f=\int_Y  f_y  dy,
\qquad
\nor{f}^2=\int_Y \nor{f_y}^2 dy\]
where $f_y\in L^2(X,\nu_y)$. Since $\nu_y$ is
concentrated on $\Phi^{-1}(y)$,  $f_y$ can be regarded as
a function on $\Phi^{-1}(y)$. In particular, if $f$ is
continuous, then $f_y$ is the restriction of $f$ to
$\Phi^{-1}(y)$.

To state the next disintegration formula, we fix a locally compact
second countable topological space $Z$,  two Borel maps
 $\pi:Y\to Z$ and  $o:Z\to Y$, and a Radon measure $\lambda$ on $Z$ such that:
  \begin{enumerate}[i)]
  \item $\pi(y)=\pi(y')$ if and only if $y$ and
    $y'$ belong to the same orbit;
\item  for all $z\in \pi(Y)$ we have $\pi(o(z))=z$;
\item\label{pseudo} a set $E\subset Z$ is $\lambda$-negligible if and only if $\pi^{-1}(E)$ is
Lebesgue negligible.
\end{enumerate}
\begin{remark}
The set $Z$ is a sort of redundant parametrization of the orbit space $Y/H$ by means
of a  locally compact space (in general $Y/H$ is not Hausdorff, hence not locally compact, with respect
to the quotient topology), $\pi$ replaces the canonical projection
from $Y$ onto $Y/H$ (though, in general, $\pi$ is not surjective) and
$o$ is a Borel section for $\pi$.  Property~\ref{pseudo}) states that
the $\lambda$-negligible sets of $Z$ are uniquely defined by $\pi$ and
the Lebesgue measure of $Y$, so that the measure class of $\lambda$ is
unique.  As a consequence of~\ref{pseudo}), $\lambda$ is concentrated
on $\pi(Y)$. 
The existence of $Z$, $\lambda$, $\pi$ and $o$ follows from
 Assumption~\ref{H1} and Theorem~2.9 in \cite{effros65},
as explained in \cite{dede10}  before Theorem~3. In many examples, however, $Y/H$ is
indeed locally compact and we may safely assume that $Z=Y/H$ and that $\pi$ is the canonical projection.
As for the measure $\lambda$, one takes an $L^1$ positive density $f$ on $Y$ and then builds the image measure of $f\cdot dy$ under $\pi$.

\end{remark}
The next result is again obtained from  the theory of disintegration
of measures (see the proof of Theorem~4 of \cite{dede10} for a list of
references).

\vskip1cm

\begin{prop}\label{prop:dis2}
There is a family of
  Radon measures $\set{\tau_z}_{z\in Z}$ on $Y$ with the following
  properties:
\begin{enumerate}[a)]
  \item for every $z\in Z$, the measure $\tau_z$ is concentrated on the
    orbit $\pi^{-1}(z)$
    and, for every $h\in H$ and $\varphi\in C_c(Y)$
    \begin{equation}
      \int_Y \varphi(h[y])d\tau_z(y)=\chi(h)^{-1} \int_Y
      \varphi(y)d\tau_z(y)\label{eq:14};
    \end{equation}
  \item for every $\varphi\in C_c(Y)$
    \begin{equation*}
      \int_Y
      \varphi(y)dy=\int_Z\Bigl(\int_Y
      \varphi(y)d\tau_z(y)\Bigr) d\lambda(z)
    \end{equation*}
  \end{enumerate}
\end{prop}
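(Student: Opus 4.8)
The plan is to realise $\{\tau_z\}_{z\in Z}$ as the disintegration of the Lebesgue measure $dy$ on $Y$ along the map $\pi\colon Y\to Z$, and then to deduce the relative invariance \eqref{eq:14} from that of $dy$ under the action \eqref{TYaction} by a uniqueness argument, exactly in the spirit of Proposition~\ref{prop:dis1}.

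\emph{Existence and property~b).} By \ref{pseudo}), the measure $\lambda$ is a pseudo-image of $dy$ under $\pi$, so the general disintegration theorem of \cite{bourbaki_int} produces a $\lambda$-measurable family $\{\tau_z\}_{z\in Z}$ of Radon measures on $Y$, essentially unique (i.e. unique up to a $\lambda$-null set), such that $\tau_z$ is carried by the fibre $\pi^{-1}(z)$ for $\lambda$-almost every $z$ and $\int_Y\varphi\,dy=\int_Z\bigl(\int_Y\varphi\,d\tau_z\bigr)\,d\lambda(z)$ for every $\varphi\in C_c(Y)$. Since $\pi$ separates $H$-orbits, each fibre $\pi^{-1}(z)$ is a single orbit; discarding the exceptional $\lambda$-null set (and setting $\tau_z=0$ there) yields the concentration statement in~a) and property~b) for all $z$.

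\emph{Relative invariance \eqref{eq:14}.} Fix $h\in H$ and let $T_h(y)=h[y]$ denote the action \eqref{TYaction}. Because $\pi$ is constant on orbits and $T_h$ maps each orbit onto itself, we have $\pi\circ T_h=\pi$, so $T_h$ preserves every fibre $\pi^{-1}(z)$. Moreover $dy$ is relatively invariant under $T_h$ with multiplier $\chi(h)^{-1}$; this is the $Y$-level analogue of \eqref{eq:13} and follows from \eqref{eq:alpha} and \eqref{TYaction}. Substituting $\varphi\circ T_h$ for $\varphi$ in property~b) and applying this transformation law to the left-hand side, I would rewrite the identity as $\int_Y\varphi\,dy=\int_Z\bigl(\int_Y\varphi\,d\tau^{h}_z\bigr)\,d\lambda(z)$, where $\tau^{h}_z:=\chi(h)\,(T_h)_*\tau_z$. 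As $T_h$ preserves fibres, each $\tau^{h}_z$ is again carried by $\pi^{-1}(z)$, so $\{\tau^{h}_z\}_{z\in Z}$ is a second disintegration of $dy$ relative to $(\pi,\lambda)$. Essential uniqueness then forces $\tau^{h}_z=\tau_z$ for $\lambda$-almost every $z$, that is $(T_h)_*\tau_z=\chi(h)^{-1}\tau_z$, which is precisely \eqref{eq:14}.

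\emph{Main obstacle.} The delicate point is that the exceptional $\lambda$-null set in the last step depends a priori on $h$, whereas \eqref{eq:14} is asserted for every $h\in H$. To remedy this I would fix a countable dense subgroup $H_0\subset H$, obtain \eqref{eq:14} simultaneously for all $h\in H_0$ and all $z$ outside a single $\lambda$-null set, and then extend to arbitrary $h\in H$ by continuity: for fixed $\varphi\in C_c(Y)$ both sides of \eqref{eq:14} are continuous in $h$, the left-hand side by dominated convergence (the supports stay in a fixed compact set, using the local closedness of orbits granted by Assumption~\ref{H1}) and the right-hand side because $\chi$ is continuous. Controlling the $\lambda$-measurability of $z\mapsto\tau_z$ throughout, and checking that the pushforward construction stays within the measurable structure underlying the direct integral, is the technical heart of the argument; all of it rests on Assumption~\ref{H1} and on the existence of $(Z,\lambda,\pi,o)$ via \cite{effros65}, as recalled above.
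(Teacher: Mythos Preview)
Your approach is exactly what the paper has in mind: the proposition is stated without proof, the authors simply remarking that it ``is again obtained from the theory of disintegration of measures (see the proof of Theorem~4 of \cite{dede10} for a list of references).'' Your sketch---disintegrate $dy$ along $\pi$ to obtain $\{\tau_z\}$ and property~b), then deduce the relative invariance~\eqref{eq:14} from that of $dy$ via the essential uniqueness of disintegrations, handling the $h$-dependent null set by passing through a countable dense subgroup---is precisely the standard argument that this citation encapsulates, so there is nothing further to compare.
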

As in~\eqref{eq:6}, the following direct integral decompositions hold true
\begin{align}
L^2(Y,dy) & =\intl_Z L^2(Y,\tau_z) d\lambda(z) \nonumber \\
L^2(X,dx) & =\intl_Z \Bigl( \intl_Y L^2(X,\nu_y)d\tau_z(y)
\Bigr)d\lambda(z). \label{eq:8bis}
  \end{align}
If $z\notin\pi(Y)$, then $\tau_z=0$ and $L^2(Y,\tau_z)=\{0\}$. By \eqref{eq:8bis}, for any $f\in L^2(X,dx)$ we write 
\begin{align}
f &= \int_Z f_z\ d\lambda(z) \qquad f_z \in \int_Y L^2(X,\nu_y)d\tau_z(y)\label{slice1}\\
f_z &=  \int_Y f_{z,y} \, d\tau_z(y),\qquad f_{z,y}\in L^2(X,\nu_y).  \label{slice2}
\end{align}

In order to write the admissibility conditions, we introduce a unitary
operator $S$ that plays a crucial r\^ole because it implements the diagonalization of $U$.
Some extra ingredient is required.
Given $z\in \pi(Y)$, we take $o(z)$ as the origin of the orbit
$\pi^{-1}(z)=H[o(z)]$, which is
locally closed by assumption. Hence,  there exists a Borel
section  $q_z: \pi^{-1}(z)\to H$ such that  $q_z(o(z))$ is the identity element of $H$
and $q_z(y)[o(z)]=y$ for all $y\in
H[o(z)]$. Since $\tau_z$ is concentrated on $\pi^{-1}(z)$,  $q_z$ can
be extended as a Borel map on $Y$ such that the equality
$q_z(y)[o(z)]=y$ holds true for $\tau_z$-almost all
$y\in Y$.  Furthermore, by~\eqref{PHI},  the level set $\Phi^{-1}(o(z))$
is invariant under  the action of the
stability subgroup $H_z:=H_{o(z)}$  at $o(z)$, the measure
$\nu_z:=\nu_{o(z)}$  is concentrated on $\Phi^{-1}(o(z))$ and, by~\eqref{eq:13},
it is $H_z$-relatively invariant with weight $\chi\beta$.  Finally, we fix the
Haar measure $ds$ on $H_z$ in such a way that   Weil's formula holds true, namely,
\begin{equation}
\intl_H \varphi(h) \chi(h)^{-1} \, dh =\intl_Y \Bigl(\intl_{H_z} \varphi(q_z(y)\,s) ds\Bigr)d\tau_z(y),
\qquad\varphi\in C_c(H).\label{weil}
\end{equation}
We begin to build $S$  using the notation introduced in \eqref{slice1} and \eqref{slice2}. 
Lemma~5 of \cite{dede10} shows that for
$\lambda$-almost every $z\in Z$ the operator
\begin{align}
& S^z:\intl_Y L^2(X,\nu_y)d\tau_z(y)\to  L^2(Y\times X,\tau_z\otimes \nu_z)\nonumber\\
\label{eq:Sz2}
& (S^z f_z)(y,x)=\sqrt{\chi(q_z(y))\beta(q_z(y))}f_{z,y}(q_z(y).x),
\end{align}
is unitary;  since $\lambda$ is concentrated on
$\pi(Y)$, if $z\not\in\pi(Y)$ we  set $L^2(Y\times X,\tau_z\otimes \nu_z) =\set{0}$ and $S^z=0$.  
It follows that the direct integral operator
\begin{align*}
  & S:L^2(X,dx)\to  \int_{ Z}  L^2(Y\times X,\tau_z\otimes \nu_z) \,d\lambda(z) \\
  & S f =  \int_{ Z} S^{z} f_{z} \,d\lambda(z)
\end{align*}
is unitary as well.

\begin{remark}\label{rem:diagon}
As explained in \cite{dede10}, $S$  diagonalizes the representation $U$. More precisely,
for $z\in\pi(Y)$ define $\pi^z$ as the quasi-regular representation of $H_z$ acting on
$L^2(X,\nu_z) $, namely, for  $s\in H_z$ and $u\in L^2(X,\nu_z)$
\[
(\pi^z_su)(x)=\sqrt{\chi(s^{-1})\beta(s^{-1})} u(s^{-1}.x).
\]
Extend $\pi^z$ to a
representation of $\Sigma\rtimes H_z$ by mapping  $y\mapsto
e^{-2\pi i\scal{y}{o(z)}} \operatorname{Id}$, where
$\operatorname{Id}$ is the identity operator on $L^2(X,\nu_z) $.  Then,  the induced
representation $\operatorname{Ind}_{\Sigma\rtimes H_{z}}^G(e^{-2\pi i\scal{\cdot}{o(z)}}\otimes\pi^z)$
acts on $L^2(Y\times X,\tau_z\otimes
\nu_z)=L^2\bigl(Y,\tau_z;L^2(X,\nu_z)\bigr)$ and the operator $S$ intertwines $U$
with the direct integral representation of $G$ given by
$\int_{ Z}    \operatorname{Ind}_{\Sigma\rtimes H_{z}}^G(e^{-2\pi i\scal{\cdot}{o(z)}}\otimes\pi^z) \,d\lambda(z)$.
\end{remark}


We are now ready to state the  characterization of the
admissible vectors.
\begin{theorem}[\cite{dede10}, Theorem~6]\label{adm-vec1} A function $\eta\in L^2(X,dx)$ is an
  admissible vector for $G$ if and only if
for $\lambda$-almost every $z\in  Z$ and for every $u\in L^2(X,\nu_z)$
\begin{align}
\|u\|_{\nu_{z}}^2 & =
     \int_{Y}\Bigl(\int_{H_{z}}
  \bigl|{\int_X u(x)\overline{S^z\eta_z}(y,s^{-1}.x)d\nu_z(x)}\bigr|^2\chi(s^{-1})\beta(s^{-1})\,ds\Bigr)
\,\frac{\chi(q_z(y))}{\Delta_H(q_z(y))} d\tau_z(y). \label{SECOND}
\end{align}
\end{theorem}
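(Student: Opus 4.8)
The plan is to exploit the unitary operator $S$ of Remark~\ref{rem:diagon}, which intertwines $U$ with the direct integral representation $\rho=\intl_Z \rho^z\, d\lambda(z)$, where $\rho^z=\operatorname{Ind}_{\Sigma\rtimes H_z}^G(e^{-2\pi i\scal{\cdot}{o(z)}}\otimes\pi^z)$ acts on $L^2(Y\times X,\tau_z\otimes\nu_z)=L^2(Y,\tau_z;L^2(X,\nu_z))$. Since $S$ is unitary and $SU_g=\rho_g S$, one has $\scal{f}{U_g\eta}=\scal{Sf}{\rho_g S\eta}$ and $\nor{f}=\nor{Sf}$, so $\eta$ satisfies \eqref{eq:2} for every $f$ if and only if $\tilde\eta:=S\eta$ is an admissible vector for $\rho$; that is, $\nor{\tilde f}^2=\intl_G\abs{\scal{\tilde f}{\rho_g\tilde\eta}}^2\,dg$ must hold for every $\tilde f$ in the direct integral space. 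The first task is therefore to reduce admissibility for $\rho$ to a condition on each fibre.

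First I would argue that the decomposition $\rho=\intl_Z\rho^z\,d\lambda(z)$ is \emph{disjoint}: the $\rho^z$ are induced from the distinct characters $e^{-2\pi i\scal{\cdot}{o(z)}}$ of the abelian normal subgroup $\Sigma$, whose $H$-orbits in $\widehat{\Sigma}$ are precisely the sets $\pi^{-1}(z)$, so for $\lambda$-distinct $z$ the representations live over disjoint portions of $\widehat{\Sigma}$ and admit no nonzero intertwiners. Expanding $\scal{\tilde f}{\rho_g\tilde\eta}=\intl_Z\scal{\tilde f_z}{\rho^z_g\tilde\eta_z}\,d\lambda(z)$, the cross contributions drop out after integration in $g$, and the global reproducing formula splits as $\intl_Z\nor{\tilde f_z}^2\,d\lambda(z)=\intl_Z\bigl(\intl_G\abs{\scal{\tilde f_z}{\rho^z_g\tilde\eta_z}}^2\,dg\bigr)\,d\lambda(z)$; since $\tilde f$ is arbitrary, this is equivalent to the fibrewise identity $\nor{\tilde f_z}^2=\intl_G\abs{\scal{\tilde f_z}{\rho^z_g\tilde\eta_z}}^2\,dg$ for $\lambda$-a.e.\ $z$. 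Justifying that the $L^2(G)$-norm may be pulled inside the $\lambda$-integral, i.e.\ the general theory of admissible vectors for disjoint direct integrals, is the genuine analytic heart of the matter, and I expect it to be the main obstacle.

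Next I would make the fibrewise condition explicit. On the fibre $z$, the Mackey model has $\Sigma$ acting by the characters $y$, while $h\in H$ carries the fibre over $y$ to the one over $h[y]$, twisting by the quasi-regular representation $\pi^z$ of the stabilizer $H_z$ through the section $q_z$ and the relative-invariance weight $\chi\beta$ recorded in \eqref{eq:13}. Computing $\scal{\tilde f_z}{\rho^z_{(\sigma,h)}\tilde\eta_z}$ and integrating $\abs{\cdot}^2$ against $dg=\chi(h)^{-1}\,d\sigma\,dh$, the integration over $\sigma\in\Sigma$ is carried out by Plancherel on the abelian group $\Sigma$, which collapses the two $Y$-variables of the tensor product into one; the surviving integral over $H$ is then split, by Weil's formula \eqref{weil}, into an integral over the orbit against $\tau_z$ and an integral over $H_z$ against $ds$. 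Because $\rho^z$ is induced from $\Sigma\rtimes H_z$, its admissibility is governed entirely by the inducing data, so the condition reduces from the full fibre to a reproducing identity for the single-fibre vectors $u\in L^2(X,\nu_z)$; unwinding the definition \eqref{eq:Sz2} of $S^z$ in this reduction yields exactly \eqref{SECOND}.

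Finally, I would verify the bookkeeping of weights and the finiteness of the inner integral. The factor $\chi(q_z(y))/\Delta_H(q_z(y))$ in \eqref{SECOND} arises from the change of variables $h\mapsto q_z(y)\,s$ relating the left Haar measure $dh$ to the product $d\tau_z(y)\,ds$, combined with the $\chi^{-1}$ twist in $dg$; tracking $\chi$, $\beta$ and $\Delta_H$ through \eqref{eq:deltaG}, \eqref{eq:13}, \eqref{weil} and \eqref{eq:Sz2} is routine but must be done with care, since these weights are what make the right-hand side independent of the arbitrary choices of section $q_z$ and origin $o(z)$. The compactness of the generic stabilizers $H_z$ provided by Theorem~\ref{th:suffnec} (and its converse) is what guarantees convergence of the inner $H_z$-integral and turns the fibrewise reproducing formula into the orthogonality relation \eqref{SECOND} for $\pi^z$.
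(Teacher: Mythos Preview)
The paper does not prove this theorem; it is quoted as Theorem~6 of \cite{dede10} and stated without argument, as part of the review of known results in Section~\ref{notation}. There is therefore no proof in the present paper against which to compare your proposal.

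That said, your sketch follows the machinery the paper has set up (the intertwining operator $S$, the direct integral over $Z$, Plancherel on $\Sigma$, Weil's formula \eqref{weil}) and is broadly in line with the strategy one would expect from \cite{dede10}. One genuine slip, however: in your final paragraph you invoke compactness of the stabilizers $H_z$, via Theorem~\ref{th:suffnec}, to guarantee convergence of the inner $H_z$-integral. Theorem~\ref{adm-vec1} does \emph{not} assume Assumption~\ref{H2}; compact stabilizers enter only in Section~\ref{sub:compact} for the refined Theorem~\ref{th:9}. In fact the paper applies Theorem~\ref{adm-vec1} precisely in the cases---(3.2), (3.3), (4.1)---where the stabilizers are \emph{not} compact. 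So that part of your justification is misplaced, and whatever makes \eqref{SECOND} well-posed must be argued without appealing to compactness of $H_z$.
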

According to~\eqref{eq:11}, the measure $\nu_z$ lives on 
$\Phi^{-1}(o(z))$ and its restriction to this manifold has density
$(J\Phi)^{-1}$ with respect to the corresponding Riemannian measure. 
Moreover, if $d=\dim{\Sigma}$ the Jacobian criterion implies
that $\Phi^{-1}(o(z))$ is a finite set, $\nu_z$ is a linear combination of
Dirac measures and $L^2(X,\nu_z)$ becomes a finite
dimensional vector space.  These facts provide a stronger version of
Theorem~\ref{adm-vec1}, which is very useful in the examples.
\begin{theorem}[\cite{dede10}, Corollary 4]\label{th:corollary 4}
Assume that $\dim{\Sigma}=d$. A function $\eta\in L^2(X,dx)$ is an admissible
vector for $G$ if and only if for $\lambda$-almost every $z\in Z$ and
 all points $x_{1},\ldots, x_{M}\in\Phi^{-1}(o(z))$ 
\begin{equation}
\int_H \eta(h^{-1}.x_{i})\overline{ \eta(h^{-1}.x_{j})}
\frac{dh}{\chi(h)\beta(h)}= (J\Phi)(x_{i}) \ \delta_{ij}\qquad
i,j=1,\ldots M.\label{eq:corollary 4}
\end{equation}
\end{theorem}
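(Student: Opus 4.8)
The plan is to specialize the general admissibility criterion \eqref{SECOND} of Theorem~\ref{adm-vec1} to the case $\dim\Sigma=d$, where the geometry collapses and the condition reduces to a finite-dimensional linear-algebra identity. First I would record the structural simplifications forced by $\dim\Sigma=d$. Since $J\Phi>0$ on $X$ and $\Phi$ maps between open sets of equal dimension, $\Phi$ is a local diffeomorphism, so every level set $\Phi^{-1}(o(z))$ is discrete and hence (as already noted) finite, say $\{x_1,\dots,x_M\}$; by \eqref{eq:11} the Riemannian measure on this $0$-dimensional manifold is the counting measure, so $\nu_z=\sum_i (J\Phi(x_i))^{-1}\delta_{x_i}$ and $L^2(X,\nu_z)$ is identified with $\C^M$ equipped with $\|u\|_{\nu_z}^2=\sum_i |u(x_i)|^2/J\Phi(x_i)$. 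Under this identification the left-hand side of \eqref{SECOND} is exactly this weighted sum, while the inner integral $\int_X u\,\overline{S^z\eta_z}(y,s^{-1}.x)\,d\nu_z$ becomes the finite sum $\sum_i (J\Phi(x_i))^{-1}u(x_i)\,\overline{S^z\eta_z(y,s^{-1}.x_i)}$.

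Next I would substitute the explicit formula \eqref{eq:Sz2} for $S^z\eta_z$, identifying $\eta_{z,y}$ with the restriction of $\eta$ to $\Phi^{-1}(y)$, and expand the modulus square, turning the right-hand side of \eqref{SECOND} into the Hermitian form $\sum_{i,j}\frac{u(x_i)\overline{u(x_j)}}{J\Phi(x_i)J\Phi(x_j)}A_{ij}$ with
\[
A_{ij}=\int_Y\!\int_{H_z}\chi(q_z(y))\beta(q_z(y))\,\chi(s^{-1})\beta(s^{-1})\,\frac{\chi(q_z(y))}{\Delta_H(q_z(y))}\,\overline{\eta(q_z(y)s^{-1}.x_i)}\,\eta(q_z(y)s^{-1}.x_j)\,ds\,d\tau_z(y).
\]
The crucial point is to reassemble the integrand into a function of the single group element $h=q_z(y)s$ so that Weil's formula \eqref{weil} applies. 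Here I would use that, by Theorem~\ref{th:suffnec}~iv), the stabilizer $H_z$ is compact, hence unimodular, and the characters $\chi,\beta$ and the modular function $\Delta_H$ are all trivial on $H_z$. Replacing $s$ by $s^{-1}$ (legitimate since $H_z$ is unimodular) brings the argument into the form $q_z(y)s.x_i=h.x_i$, and the triviality on $H_z$ gives $\chi(q_z(y))=\chi(h)$, $\beta(q_z(y))=\beta(h)$, $\Delta_H(q_z(y))=\Delta_H(h)$ and $\chi(s^{-1})\beta(s^{-1})=1$. Weil's formula then collapses the double integral to $A_{ij}=\int_H \frac{\chi(h)^2\beta(h)}{\Delta_H(h)}\,\chi(h)^{-1}\,\overline{\eta(h.x_i)}\,\eta(h.x_j)\,dh$, and the substitution $h\mapsto h^{-1}$ (via $\int_H f(h^{-1})\,dh=\int_H f(h)\Delta_H(h)^{-1}\,dh$) reduces the combined weight to $1/(\chi\beta)$, producing precisely the left-hand side of \eqref{eq:corollary 4}.

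Finally, admissibility demands that $\|u\|_{\nu_z}^2$ equal this Hermitian form for every $u\in\C^M$ and $\lambda$-almost every $z$. Since $A$ is Hermitian and $\|u\|_{\nu_z}^2$ is the diagonal form with entries $(J\Phi(x_i))^{-1}$, polarization shows that the equality holds for all $u$ if and only if the two matrices coincide entrywise, i.e. $A_{ij}/(J\Phi(x_i)J\Phi(x_j))=\delta_{ij}/J\Phi(x_i)$, which is exactly $A_{ij}=J\Phi(x_i)\,\delta_{ij}$, namely \eqref{eq:corollary 4}. The main obstacle I anticipate is the bookkeeping in the middle step: arranging the factors of $\chi$, $\beta$ and $\Delta_H$ so that the integrand is genuinely a function of $h=q_z(y)s$ alone—this is where compactness of $H_z$ is indispensable—and justifying the pointwise manipulations of $\eta$ (the identification of $\eta_{z,y}$ with the restriction of $\eta$ to $\Phi^{-1}(y)$), which for a general $\eta\in L^2$ must be carried out through the measurable/continuous representatives used in \cite{dede10} rather than by naive evaluation.
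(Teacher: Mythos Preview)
The paper does not supply a proof of this statement: it is simply quoted as Corollary~4 of \cite{dede10}. So there is no in-text argument to compare against, only the pointer to the external reference.

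That said, your derivation is the natural one and is correct. You specialize the general criterion \eqref{SECOND} of Theorem~\ref{adm-vec1} (Theorem~6 of \cite{dede10}) to the equidimensional case, using exactly the two structural facts the paper highlights just before the statement: that $\Phi^{-1}(o(z))$ is finite and that $\nu_z$ is a weighted sum of Dirac masses with weights $(J\Phi)^{-1}$, so $L^2(X,\nu_z)\simeq\C^M$. The reduction of the iterated $\int_Y\int_{H_z}$ to a single $\int_H$ via Weil's formula \eqref{weil}, after using that $\chi$, $\beta$ and $\Delta_H$ are trivial on the compact stabilizer $H_z$, is precisely the mechanism behind Corollary~4 in \cite{dede10}. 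The final polarization step, turning the quadratic identity in $u\in\C^M$ into the entrywise matrix equations \eqref{eq:corollary 4}, is standard. One cosmetic remark: after your substitution $h\mapsto h^{-1}$ you obtain $\int_H\overline{\eta(h^{-1}.x_i)}\eta(h^{-1}.x_j)\,dh/(\chi\beta)$, which is the complex conjugate of the integral in \eqref{eq:corollary 4}; since the right-hand side is real and the matrix is Hermitian, the two families of conditions are equivalent (just swap $i$ and $j$), so this is not an issue.
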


\subsection{Compact stabilizers}\label{sub:compact}
In this section, we make a compactness assumption that
allows us to refine~Theorem~\ref{adm-vec1}.
\begin{ass}\label{H2}
For almost every $y\in \Phi(\R^d)$ the stability subgroup
$H_y$ of
  $y$ is compact.
\end{ass}
This assumption allows to decompose $U$ into its irreducible
components by means of $S$. Indeed,  for $\lambda$-almost every $z\in
Z$,  $H_z$ is compact, so that the
representation $\pi^z$ of $H_z$  is completely reducible and, hence,
we can decompose it as direct sum of its irreducible components
\[
\pi^z\simeq \bigoplus_{i\in I} m_i\, \pi^{z,i}\qquad L^2(X,\nu_z)\simeq\bigoplus_{i\in I} m_i\, \C^{d_i^z},
\]
where each $\pi^{z,i}$ is an irreducible representation of
$H_z$
acting on some $\C^{d_i^z}$, any two of them are not equivalent,  and the cardinal $m_i \in\N\cup
\{\aleph_0\}$ is the multiplicity of $\pi^{z,i}$ into $\pi^z$.
Theorem~7 in \cite{dede10} shows that
it is possible to choose the index set $I$ in such a way that $m_i$ is
independent of $z$ (it can happen that $d_i^z=0$). 
Hence, for $\lambda$-almost all $z\in Z$,
\begin{equation}
 L^2(Y\times X,\tau_z\otimes \nu_z) \simeq\bigoplus_{i\in I} m_i
 L^2\bigl(Y,\tau_z;\C^{d_i^z}\bigr).\label{decom1} 
\end{equation}
Denote by $\{e_j\}_{j=1}^\infty$ the canonical basis of
$\ell_2=:\C^{\aleph_0}$ and, for each $i\in I$ regard $\C^{m_i}$ as  subspace of $\ell_2$
spanned by the family $\{e_j\}_{j=1}^{m_i}$. Then,  for any $f\in L^2(X,dx)$ we write
\begin{equation}\label{eq:decomp_Sf}
Sf= \sum_{i\in I} \sum_{j=1}^{m_i} \int_Z F_z^{ij}\,d\lambda(z)  \otimes e_j
\end{equation}
where  $F_z^{i,j}\in L^2(Y,\tau_z,\C^{d_i^z})$ for all $z\in Z$, $i\in
I$ and $j=1,\ldots,m_i$, and each of the sections  $ z\mapsto F^{ij}_z$ is
$\lambda$-measurable, {\em i.e.}  for all
$\varphi\in C_c(X)$ the map
 \[ z\mapsto \int_{Y\times X}
F^{ij}_z(y,x)  S^z\varphi(y,x)d\tau_z(y)d\nu_z(x)\]
is $\lambda$-measurable, where $F^{ij}_z$ is regarded as an element of
$L^2(Y\times X,\tau_z\otimes \nu_z) $ by~\eqref{decom1}  (see remark
before Theorem~5 and Remark~9 of \cite{dede10}). Note that, since $S$
is
unitary, then clearly
\[ \intl_X \abs{f(x)}^2dx = \sum_{i\in I }\sum_{j=1}^{m_i}\intl_Z\intl_Y \nor{F^{ij}_z(y)}^2_{\C^{d_i^z}} \,d\tau_z(y)d\lambda(z).\]

We are now ready to characterize the existence of admissible vectors
under Assumption~\ref{H2}.
\begin{theorem}[Theorem~9 \cite{dede10}]\label{th:9}
A function $\eta\in L^2(\R^d,dx)$ is an admissible vector if and only
if  for all $i\in I$ and $j=1,\ldots,m_i$ there exists  $\lambda$-measurable
sections
\[ Z\ni z\mapsto F^{ij}_z\in L^2(Y,\tau_z,\C^{d_i^z})\]
such that
\begin{align*}
\sum_{i\in I }\sum_{j=1}^{m_i}\intl_Z\intl_Y \nor{F^{ij}_z(y)}^2_{\C^{d_i^z}} \,d\tau_z(y)d\lambda(z)<+\infty
\end{align*}
and, for almost every $z\in Z$ and for all $i\in I$,  $j,\ell=1,\ldots,m_i$
\begin{align}\label{eq:th9}
\intl_Y \scal{F^{ij}_z(y)}{F^{i\ell}_z(y)}_{\C^{d_i^z}} \frac{\chi(q_z(y))}{\Delta_H(q_z(y))}\,d\tau_z(y) =\frac{d_i^z}{\operatorname{vol}{H_z}}\delta_{j\ell}.
\end{align}
Under the above circumstances
\[
S\eta = \sum_{i\in I} \sum_{j=1}^{m_i} \int_Z F_z^{ij}\,d\lambda(z) \otimes e_j.
\]
\end{theorem}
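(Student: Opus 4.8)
The plan is to derive the statement directly from the general characterization in Theorem~\ref{adm-vec1}, turning the inner $H_z$-integral in \eqref{SECOND} into a statement about the quasi-regular representation $\pi^z$ and then exploiting its decomposition into irreducibles, which is legitimate precisely because Assumption~\ref{H2} forces $H_z$ to be compact for $\lambda$-almost every $z$.

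First I would rewrite the innermost quantity in \eqref{SECOND} as a matrix coefficient. Fix $z$ and $y$ and set $w_y:=(S^z\eta_z)(y,\cdot)\in L^2(X,\nu_z)$, so that, under \eqref{decom1}, the vector $F^{ij}_z(y)$ is exactly the component of $w_y$ in the $j$-th copy of $\C^{d_i^z}$. Using the definition $(\pi^z_s w_y)(x)=\sqrt{\chi(s^{-1})\beta(s^{-1})}\,w_y(s^{-1}.x)$ from Remark~\ref{rem:diagon}, one checks that
\[
\int_X u(x)\,\overline{(S^z\eta_z)(y,s^{-1}.x)}\,d\nu_z(x)
=\frac{1}{\sqrt{\chi(s^{-1})\beta(s^{-1})}}\,\scal{u}{\pi^z_s w_y}_{\nu_z},
\]
so that the weight $\chi(s^{-1})\beta(s^{-1})$ appearing in \eqref{SECOND} is absorbed exactly and the inner integrand collapses to $\abs{\scal{u}{\pi^z_s w_y}}^2$. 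Hence \eqref{SECOND} becomes the requirement that, for $\lambda$-a.e.\ $z$ and every $u$,
\[
\nor{u}_{\nu_z}^2=\intl_Y\Bigl(\intl_{H_z}\abs{\scal{u}{\pi^z_s w_y}}^2\,ds\Bigr)\frac{\chi(q_z(y))}{\Delta_H(q_z(y))}\,d\tau_z(y).
\]

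The core of the argument is the evaluation of the $H_z$-integral. Expanding $u=\sum_{i,k}e_k\otimes u^i_k$ and $w_y=\sum_{i,j}e_j\otimes F^{ij}_z(y)$ along \eqref{decom1}, and using that $\pi^z$ acts trivially on the multiplicity index and by the irreducible $\pi^{z,i}$ on each $\C^{d_i^z}$, the Schur orthogonality relations for the compact group $H_z$ make all cross terms between inequivalent $\pi^{z,i}$ vanish and yield
\[
\intl_{H_z}\abs{\scal{u}{\pi^z_s w_y}}^2\,ds
=\sum_{i\in I}\frac{\operatorname{vol}{H_z}}{d_i^z}\sum_{j,\ell=1}^{m_i}\scal{u^i_j}{u^i_\ell}\,\overline{\scal{F^{ij}_z(y)}{F^{i\ell}_z(y)}}.
\]
Inserting this into the previous display and interchanging the $Y$-integral with the summations (licit once the integrability of the sections is in force), the right-hand side becomes a Hermitian form in the components $(u^i_j)$ whose matrix entries are $\tfrac{\operatorname{vol}{H_z}}{d_i^z}\intl_Y\scal{F^{i\ell}_z(y)}{F^{ij}_z(y)}\tfrac{\chi(q_z(y))}{\Delta_H(q_z(y))}\,d\tau_z(y)$. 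Since the left-hand side $\nor{u}^2=\sum_{i,j}\nor{u^i_j}^2$ is the form of the identity and the equality must hold for all $u$, comparing the two Hermitian forms entry by entry gives exactly the orthogonality relations \eqref{eq:th9} (the finiteness condition being automatic, as it merely records $\nor{S\eta}^2=\nor{\eta}^2<\infty$). Running the chain of equalities backwards proves the converse: starting from sections satisfying \eqref{eq:th9}, the finiteness hypothesis guarantees that $\sum_{i,j}\int_Z F^{ij}_z\,d\lambda(z)\otimes e_j$ has finite norm, so it equals $S\eta$ for a unique $\eta\in L^2(\R^d,dx)$ (as $S$ is unitary), and the same computation shows that this $\eta$ satisfies \eqref{SECOND}, hence is admissible.

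I expect the genuine difficulty to be measure-theoretic rather than algebraic. One must guarantee that the field $z\mapsto\pi^z$ and the decomposition \eqref{decom1} can be organised measurably with the multiplicities $m_i$ independent of $z$ (this is where Theorem~7 and Remark~9 of \cite{dede10} are needed), that the sections $z\mapsto F^{ij}_z$ inherit $\lambda$-measurability from $\eta$ through \eqref{eq:decomp_Sf}, and that the Fubini-type interchanges among the integration over $H_z$, the integration over $Y$ against $\tau_z$, and the summation over $I$ are justified. A second delicate point is the passage from ``equality of the two Hermitian forms for every $u$'' to the entrywise identities \eqref{eq:th9}: this requires knowing a priori that the right-hand form is finite-valued for $\lambda$-a.e.\ $z$, which again rests on the measurability and integrability of the sections being established beforehand.
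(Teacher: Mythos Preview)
The paper does not actually prove this theorem: it is quoted verbatim as Theorem~9 of \cite{dede10} and used as a black box in the subsequent case-by-case analysis, so there is no ``paper's own proof'' to compare against. That said, your derivation from Theorem~\ref{adm-vec1} via Schur orthogonality is correct and is precisely the route one would expect the original reference to take: the reduction of the inner integral in \eqref{SECOND} to $\int_{H_z}|\langle u,\pi^z_s w_y\rangle|^2\,ds$, the block-diagonalisation along \eqref{decom1}, and the comparison of Hermitian forms are all sound, and you have correctly flagged that the genuine work lies in the measurable selection of the decomposition (handled in \cite{dede10} by Theorem~7 and Remark~9) and in the a~priori finiteness needed to justify the Fubini steps and the entrywise comparison.
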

\section{Orbit equivalence}\label{orbitequivalence}
In this section we introduce the notion of orbit map and, more importantly, of orbit equivalence,
which is devised in order to treat in a unified ways families of groups depending on a parameter. 

Suppose we have two sets of data $(\Sigma\rtimes H, dh, X, Y)$ and $(\Sigma'\rtimes H', dh', X', Y')$, each enjoying all the properties that we have discussed in the previous section, where $dh$ and $dh'$ are two chosen left Haar measures. These data uniquely determine  $\chi$, $\beta$ and $\Phi$, and the analogues for the second set, which will all be denoted with primed letters. We assume now that the two sets are related by a triple of maps  $(\theta,\psi,\xi)$,  called an {\it orbit map} between the two sets, which are required to satisfy the following properties:
\begin{itemize}
\item[(OM1)] $\theta:H\to H'$ is an isomorphism between the two connected Lie groups $H$ and $H'$ such that  $dh'=\theta_* dh$ (the image measure); 
\item[(OM2)] $\psi:X\to X'$ is a diffeomorphism such that $\psi(hx)=\theta(h)\psi(x)$ for every $h\in H$ and every $x\in X$; hence in particular, by invariance of domain,  $d=d'$;
\item[(OM3)] $\xi:Y\to Y'$ is a diffeomorphism such that $\xi(\Phi(x))=\Phi'(\psi(x))$ for every $x\in X$; hence in particular, by invariance of domain,  $n=n'$.
\end{itemize}
\begin{definition}\label{OEQ} An orbit map $(\theta,\psi,\xi)$ between $(\Sigma\rtimes H, dh, X, Y)$ and $(\Sigma'\rtimes H', dh', X', Y')$ is called an {\it orbit equivalence} if the Jacobian $J\xi$
is a positive constant on $Y$. 
\end{definition}

The basic properties of orbit maps $(\theta,\psi,\xi)$ are recorded below.
\begin{prop}\label{OMprop} Let $(\theta,\psi,\xi)$ be an orbit map. Then
\begin{itemize}
\item[(i)] $\xi(h[y])=\theta(h)[\xi(y)]$ for every $h\in H$ and every $y\in Y$;
\item[(ii)]  $\psi(\Phi^{-1}(y))=(\Phi')^{-1}(\xi(y))$ for every $y\in Y$;
\item[(iii)] $J\xi(h[y])\chi(h)^{-1}=\chi'(\theta(h))^{-1}J\xi(y)$  for every $h\in H$ and every $y\in Y$;
\item[(iv)] $J\psi(hx)\beta(h)=\beta'(\theta(h))J\psi(x)$ for every $h\in H$ and every $x\in X$; 
\end{itemize}
\end{prop}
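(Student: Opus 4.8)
The plan is to derive all four identities from the two intertwining relations built into the definition of an orbit map, namely (OM2) and (OM3), combined with the fundamental invariance property~\eqref{PHI} of the symbol, $\Phi(h.x)=h[\Phi(x)]$, together with its primed analogue $\Phi'(h'.x')=h'[\Phi'(x')]$. The first two statements are set-theoretic consequences of these relations and of the fact that $\psi,\xi$ are bijections; the last two are then obtained by differentiating the first and invoking the chain rule for Jacobians, once the Jacobians of the underlying linear actions are identified with the characters $\chi$ and $\beta$.

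First I would prove (i). Given $y\in Y$, write $y=\Phi(x)$ for some $x\in X$, which is possible since $Y=\Phi(X)$. Then the computation reads
$\xi(h[y])=\xi(h[\Phi(x)])=\xi(\Phi(h.x))=\Phi'(\psi(h.x))=\Phi'(\theta(h).\psi(x))=\theta(h)[\Phi'(\psi(x))]=\theta(h)[\xi(y)]$,
using in order~\eqref{PHI}, then (OM3), then (OM2), then~\eqref{PHI} for the primed data, and finally (OM3) once more. Thus $\xi$ conjugates the $H$-action on $Y$ into the $H'$-action on $Y'$ through $\theta$. For (ii), one inclusion is immediate: if $\Phi(x)=y$ then $\Phi'(\psi(x))=\xi(\Phi(x))=\xi(y)$ by (OM3), so $\psi(x)\in(\Phi')^{-1}(\xi(y))$. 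The reverse inclusion uses that $\psi$ is a bijection and $\xi$ is injective: any $x'\in(\Phi')^{-1}(\xi(y))$ equals $\psi(x)$ for a unique $x\in X$, and then $\xi(\Phi(x))=\Phi'(\psi(x))=\xi(y)$ forces $\Phi(x)=y$, whence $x'\in\psi(\Phi^{-1}(y))$.

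Then I would obtain (iii) and (iv) by differentiation. The key observation is that both relevant actions are linear with constant Jacobians equal to the characters: the map $L_h:y\mapsto h[y]={}^t\!M_h^{-1}y$ has Jacobian $\abs{\det{M_h}}^{-1}=\chi(h)^{-1}$ by~\eqref{eq:alpha} and~\eqref{TYaction}, and the map $A_h:x\mapsto h.x$ has Jacobian $\abs{\det{h}}=\beta(h)$, with the analogous statements for the primed data. Rewriting (i) as the identity of maps $\xi\circ L_h=L'_{\theta(h)}\circ\xi$ and applying multiplicativity of Jacobians under composition gives, at $y$, the equality $J\xi(h[y])\,\chi(h)^{-1}=\chi'(\theta(h))^{-1}\,J\xi(y)$, which is (iii). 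Differentiating the relation $\psi\circ A_h=A'_{\theta(h)}\circ\psi$ coming from (OM2) in exactly the same way yields $J\psi(h.x)\,\beta(h)=\beta'(\theta(h))\,J\psi(x)$, which is (iv).

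The computations are routine, and I do not expect a genuine obstacle. The only point requiring care is the identification of the Jacobians of the linear actions on $\R^n$ and $\R^d$ with $\chi(h)^{-1}$ and $\beta(h)$ respectively (and their primed counterparts), which rests on the formula $\chi(h)=\abs{\det{M_h}}$ furnished by~\eqref{eq:alpha} and~\eqref{TYaction}. Once this is in place, (iii) and (iv) follow formally from the chain rule applied to the intertwining relations established in (i) and (OM2).
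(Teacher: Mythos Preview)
Your proof is correct and follows essentially the same route as the paper: part (i) is obtained via the same chain of equalities after writing $y=\Phi(x)$, part (ii) from (OM3) and bijectivity (you are in fact more explicit about the reverse inclusion than the paper), and parts (iii) and (iv) by differentiating the intertwining relations and identifying the Jacobians of the linear actions with $\chi^{-1}$ and $\beta$, exactly as the paper does via tangent maps and determinants.
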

\begin{proof} (i) Using the various properties, including the surjectivity of $\Phi:X\to Y$, we infer
\begin{align*}
\xi(h[y])&=\xi(h[\Phi(x)])\\
&=\xi(\Phi(hx))\\
&=\Phi'(\psi(hx))     \\
&=\Phi'(\theta(h)\psi(x))\\
&=\theta(h)[\Phi'(\psi(x)]\\
&=\theta(h)[\xi(y)].
\end{align*}
\noindent
(ii) This follows immediately from (OM3), since
\[
\Phi'(\psi(\Phi^{-1}(y)))=\xi(\Phi(\Phi^{-1}(y)))=\xi(y).
\]
\noindent
(iv) Taking tangent maps at $x$, we have $\psi_{*hx}h=\theta(h)\psi_{*x}$. The absolute value of the determinant of  these yields the  equality asserted in (iv). Statement (iii) follows from (i).  
\end{proof}
\begin{remark}\label{orbitmapping} By (OM3),  the diagram
 $$
 \begin{CD} 
 X @>\Phi>> Y\\
 @VV\psi V@VV\xi V \\
X'@>\Phi'>> Y' \end{CD}
$$
is commutative.  It is also equivariant with respect to the actions of $H$ (on $X$ and $Y$) and those of $H'$ (on $X'$ and $Y'$). The equivariance in the left column is (OM2) and that of the right column is (i) of Proposition~\ref{OMprop}. 
 This means that the $H$-orbits in $X$ are mapped onto the $H'$-orbits in $X'$ by $\psi$,  and that the $H$-orbits in $Y$ are mapped onto the $H'$-orbits in $Y'$ by $\xi$, whence the name orbit map. Finally, by  (ii) of Proposition~\ref{OMprop}, fibers relative to $\Phi$ and $\Phi'$ are mapped onto eachother by $\psi$.
 \end{remark}
 
\begin{remark}\label{samealfa} If $(\theta,\psi,\xi)$ is an orbit
  equivalence, then (iii) of Proposition~\ref{OMprop} implies that, for
  all $h\in H$, $\chi(h)=\chi'(\theta(h))$. Thus, two  orbitally equivalent data sets must have the ``same'' $\chi$. \end{remark}

\begin{lemma}\label{measurenu} Suppose that  $(\theta,\psi,\xi)$ is an orbit map. Then, for every $y'\in Y'$, the measure $\nu'_{y'}$ described by Proposition~$\ref{prop:dis1}$
is given by
\begin{equation}
\nu'_{y'}=J\xi^{-1}(y')\,J\psi\circ\psi^{-1}\cdot \psi_{*}(\nu_{\xi^{-1}(y')}),
\label{nuprime}
\end{equation}
where $\psi_{*}(\nu_{\xi^{-1}(y')})$ is the image measure of $\nu_{\xi^{-1}(y')}$ under $\psi$.
\end{lemma}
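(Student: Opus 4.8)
The plan is to exploit the uniqueness built into Proposition~\ref{prop:dis1}: once the base measure $dy'$ on $Y'$ is fixed, the family $\{\nu'_{y'}\}_{y'\in Y'}$ is the coarea disintegration of the Lebesgue measure $dx'$ along the fibres of $\Phi'$, characterised pointwise by the Riemannian density formula~\eqref{eq:11}. Accordingly, I would set
$$
\tilde\nu_{y'} := J\xi^{-1}(y')\,(J\psi\circ\psi^{-1})\cdot\psi_*\bigl(\nu_{\xi^{-1}(y')}\bigr),
$$
and prove $\tilde\nu_{y'}=\nu'_{y'}$ by checking that $\{\tilde\nu_{y'}\}$ enjoys the two defining features of the primed disintegration: concentration on the correct fibre, and the disintegration identity~\eqref{eq:12} for the primed data.

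First I would verify concentration. Since $\nu_{\xi^{-1}(y')}$ lives on $\Phi^{-1}(\xi^{-1}(y'))$, its image under $\psi$ lives on $\psi\bigl(\Phi^{-1}(\xi^{-1}(y'))\bigr)$, which by Proposition~\ref{OMprop}(ii) equals $(\Phi')^{-1}(y')$; multiplying by the density $J\xi^{-1}(y')\,(J\psi\circ\psi^{-1})$, a function on $X'$, does not enlarge the support. Hence each $\tilde\nu_{y'}$ is concentrated on $(\Phi')^{-1}(y')$.

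Next I would establish the disintegration identity. Fix $\varphi'\in C_c(X')$. Unwinding the image measure turns $\int_{X'}\varphi'\,d\tilde\nu_{y'}$ into $J\xi^{-1}(y')\int_X\varphi'(\psi(x))\,J\psi(x)\,d\nu_{\xi^{-1}(y')}(x)$, using $(J\psi\circ\psi^{-1})(\psi(x))=J\psi(x)$. Integrating this against $dy'$ and changing the base variable $y'=\xi(y)$, so that $dy'=J\xi(y)\,dy$ and $J\xi^{-1}(\xi(y))\,J\xi(y)=1$, collapses the two Jacobian factors and leaves $\int_Y\bigl(\int_X\varphi'(\psi(x))\,J\psi(x)\,d\nu_y(x)\bigr)dy$; applying the unprimed coarea disintegration~\eqref{eq:12} with the integrand $x\mapsto\varphi'(\psi(x))J\psi(x)$ (which lies in $C_c(X)$) and then the change of variables $x'=\psi(x)$, for which $dx'=J\psi(x)\,dx$, yields exactly $\int_{X'}\varphi'(x')\,dx'$. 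Thus $\int_{Y'}\tilde\nu_{y'}\,dy'=dx'$, and uniqueness of the coarea disintegration forces $\tilde\nu_{y'}=\nu'_{y'}$.

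The delicate point, which I expect to be the main obstacle, is that this uniqueness argument a priori delivers the equality only for (Lebesgue-)almost every $y'$, whereas the statement claims it for every $y'\in Y'$. I would bridge this gap by observing that both families are given by the canonical pointwise formula~\eqref{eq:11}, namely the Riemannian density $(J\Phi')^{-1}\,dv'_{y'}$ on the smoothly varying fibres, so two continuously varying representatives agreeing almost everywhere must agree everywhere. A more self-contained alternative is to verify~\eqref{eq:11} for $\tilde\nu_{y'}$ directly and pointwise: this amounts to the transformation law $\psi_*\bigl((J\Phi)^{-1}dv_y\bigr)=\bigl(J\xi\circ\Phi\bigr)\,(J\psi\circ\psi^{-1})^{-1}\,(J\Phi')^{-1}dv'_{y'}$, obtained by differentiating $\Phi'\circ\psi=\xi\circ\Phi$ and splitting $D\psi$ into its components tangent and transverse to the fibres, so that the tangential Jacobian accounts for $dv_y\mapsto dv'_{y'}$ while the transverse part, together with $D\xi$, accounts for $J\Phi\mapsto J\Phi'$.
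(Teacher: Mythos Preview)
Your approach is the same as the paper's: verify that the candidate family satisfies the characterising properties of the disintegration and invoke uniqueness. You correctly check concentration on the fibres and the disintegration identity $\int_{Y'}\tilde\nu_{y'}\,dy'=dx'$. The only gap is precisely the one you flag at the end: with just these two properties, uniqueness gives equality only for almost every $y'$. The paper closes this gap not by the differential-geometric computation you sketch, but by verifying one additional property, namely that for every $\varphi'\in C_c(X')$ the map $y'\mapsto\int_{X'}\varphi'\,d\tilde\nu_{y'}$ is continuous. This is immediate from the expression you already derived, $J\xi^{-1}(y')\int_X\varphi'(\psi(x))J\psi(x)\,d\nu_{\xi^{-1}(y')}(x)$, since $\xi^{-1}$ is smooth and the inner integral is continuous in $y$ by the corresponding property of the unprimed family. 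The uniqueness statement the paper invokes (Theorem~2 of \cite{dede10}) requires concentration, the disintegration identity, \emph{and} this continuity, and in return delivers equality for every $y'$; so once you add this short continuity check your argument is complete and matches the paper's exactly.
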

\begin{proof} We observe {\it en passant} that $J\psi\circ\psi^{-1}$  is a smooth density and $J\xi^{-1}(y')$ is of course a real number. Theorem~2 in \cite{dede10} tells us that the family $\{\nu_{y'}\}$ is unique provided that it satisfies:
\begin{itemize}
\item[(i)] $\nu_{y'}$ is concentrated on $\Phi^{-1}(y')$ for all $y'\in Y'$;
\item[(ii)] the family $\{\nu_{y'}\}$ is scalarly integrable with respect to $dy'$ and $dx'=\int_{Y'}\nu_{y'}\,dy'$;
\item[(iii)] for any $\varphi\in C_c(X')$ the map $y'\mapsto\int_{X'}\varphi(x')d\nu_{y'}(x')$ is continuous.
\end{itemize}
We therefore show that the measure $\bar\nu_{y'}$ defined by the right-hand side of \eqref{nuprime} satisfies these properties.  First we prove that $\bar\nu_{y'}$ is concentrated on $(\Phi')^{-1}(y')$ by showing that such is $\psi_{*}(\nu_{\xi^{-1}(y')})$. Indeed, for any Borel set $E'\subset X'$, if $y=\xi^{-1}(y')$ by (ii) of Proposition~\ref{OMprop} we have
\[
\psi^{-1}(E')\cap\Phi^{-1}(y)=\emptyset
\iff
E'\cap\psi(\Phi^{-1}(y))=\emptyset
\iff
E'\cap(\Phi')^{-1}(y')=\emptyset.
\]
As for (iii), take $\varphi\in C_c(X')$. Then since $x\mapsto\bar\varphi(x)=J\psi(x)\,\varphi(\psi(x))$ is in $C_c(X)$,
by definition of image measure we have
\begin{align*}
  \int_{X'}\varphi(x')\,d\bar\nu_{y'}(x') & =J\xi^{-1}(y')\int_XJ\psi(x)\varphi(\psi(x))\,d\nu_{\xi^{-1}(y')}(x)
  \\
&
=J\xi^{-1}(y')\left[\int_X\bar\varphi(x)\,d\nu_{y}(x)\right]_{y=\xi^{-1}(y')}.
\end{align*}
The smoothness of $\xi$ and the continuity of the square bracket with respect to $y$, implied by property (iii) of the family $\{\nu_y\}$, shows the desired continuity. Finally, to prove (ii) we  take $\varphi\in C_c(X')$ and compute
\begin{align*}
\int_{Y'}\int_{X'}\varphi(x')\,d\bar\nu_{y'}(x')\,dy'
&=\int_{Y'}J\xi^{-1}(y')\left[\int_XJ\psi(x)\varphi(\psi(x))\,d\nu_{\xi^{-1}(y')}(x)\right] \,dy'    \\
(y=\xi^{-1}(y'))\hskip0.3truecm&=\int_{Y}\left[\int_XJ\psi(x)\varphi(\psi(x))\,d\nu_{y}(x)\right] \,dy     \\
\eqref{eq:12}\hskip0.3truecm&=\int_X J\psi(x)\varphi(\psi(x))\,dx   \\
(x'=\psi(x))\hskip0.3truecm&=\int_{X'}\varphi(x')\,dx',
\end{align*}
as desired.
\end{proof}
We introduce the following notation: $W:L^2(X',dx')\to L^2(X,dx)$ denotes the unitary map
\[
Wf'(x)=\sqrt{J\psi(x)}f'(\psi(x)).
\]
We are finally in a position to state our main result on orbit equivalence.
\begin{theorem}\label{OE} Let $(\theta,\psi,\xi)$ be an orbit map between $(\Sigma\rtimes H, dh, X, Y)$ and $(\Sigma'\rtimes H', dh', X', Y')$. If $\eta\in L^2(X,dx)$ is admissible for $G=\Sigma\rtimes H$ and if
\begin{equation}
\eta^*=\sqrt{J\xi\circ\Phi}\;\eta
\label{etastar}
\end{equation}
is also in $L^2(X,dx)$, then $W^{-1}\eta^*$ is admissible for $G'=\Sigma'\rtimes H'$. If  $(\theta,\psi,\xi)$ is an orbit equivalence, then all admissible vectors of $G'$ are of the form $W^{-1}\eta^*$ for some admissible $\eta\in L^2(X,dx)$.
\end{theorem}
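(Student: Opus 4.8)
The plan is to transport the admissibility condition for $G$ through the orbit map, using Theorem~\ref{adm-vec1} as the common characterization on both sides. The operator $W$ is the natural unitary that relates the two $L^2$-spaces, so the strategy is to show that the admissibility integral identity \eqref{SECOND} for $G'$, evaluated at $W^{-1}\eta^*$, reduces to the corresponding identity \eqref{SECOND} for $G$ at $\eta$. The key structural fact is that an orbit map carries all the ingredients entering \eqref{SECOND} to their primed counterparts: by (OM1)--(OM3) and Proposition~\ref{OMprop}, the orbits, the level sets $\Phi^{-1}(y)$, the characters $\chi,\beta$, and the stabilizers are all matched. Concretely, $\xi$ identifies the orbit spaces, so I can transport the redundant parametrization $(Z,\lambda,\pi,o)$ on the $G$-side to one on the $G'$-side by composing with $\xi$; the section $q_z$ and the stabilizer $H_z$ go over to $\theta(H_z)=H'_{\xi(o(z))}$ via $\theta$.

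First I would spell out how the disintegration data transforms. Lemma~\ref{measurenu} already gives the transformation rule \eqref{nuprime} for the fiber measures $\nu'_{y'}$ in terms of $\nu_y$, $J\psi$ and $J\xi$. The factor $\sqrt{J\xi\circ\Phi}$ appearing in the definition of $\eta^*$ in \eqref{etastar} is designed precisely to absorb the $J\xi^{-1}(y')$ weight in \eqref{nuprime}: restricting $W^{-1}\eta^*$ to a primed fiber and comparing $\|\cdot\|^2_{\nu'_{z'}}$ norms should produce the matching $\|\cdot\|^2_{\nu_z}$ norm on the unprimed side, with the Jacobian weights cancelling. I would then check that the inner integral in \eqref{SECOND}---the quantity $\int_X u(x)\overline{S^z\eta_z}(y,s^{-1}.x)\,d\nu_z(x)$---is intertwined by $W$ and the $S$-operators: because $S^z$ \eqref{eq:Sz2} is built from the same weight $\sqrt{\chi\beta}$ and the same section action, and because $\theta$ matches Haar measures $dh\leftrightarrow dh'$ (OM1) and $J\psi$ matches $\beta\leftrightarrow\beta'$ (Proposition~\ref{OMprop}(iv)), the primed and unprimed inner integrals agree after the change of variable $x'=\psi(x)$. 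The outer weight $\chi(q_z(y))/\Delta_H(q_z(y))$ transforms correctly because $\chi=\chi'\circ\theta$ on an orbit equivalence (Remark~\ref{samealfa}) and $\theta$ is a group isomorphism, so $\Delta_H=\Delta_{H'}\circ\theta$.

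For the first assertion (admissibility is preserved in one direction), it suffices to verify that $\eta$ satisfying \eqref{SECOND} for $G$ forces $W^{-1}\eta^*$ to satisfy \eqref{SECOND} for $G'$; this is the forward substitution just sketched and requires only that $\eta^*\in L^2$, which is hypothesized. For the converse under orbit equivalence, the extra input is that $J\xi$ is a \emph{positive constant}: then $\eta\mapsto\eta^*=\sqrt{J\xi\circ\Phi}\,\eta$ is a bijection of $L^2(X,dx)$ onto itself (multiplication by a nonzero constant), so \emph{every} $\eta'\in L^2(X',dx')$ can be written as $W^{-1}\eta^*$ with $\eta=(J\xi)^{-1/2}W\eta'\in L^2(X,dx)$, and running the equivalence of the two conditions \eqref{SECOND} backwards shows $\eta$ is admissible for $G$. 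I expect the main obstacle to be the bookkeeping in the change-of-variables between the two copies of \eqref{SECOND}: one must carefully track how the measurable section $q_z$, the Haar measure $ds$ on $H_z$ normalized by Weil's formula \eqref{weil}, and the three Jacobian factors $J\xi, J\psi, J\Phi$ interact so that all weights cancel \emph{exactly} rather than up to a spurious function of $z$. Verifying that the normalization \eqref{weil} is preserved---i.e. that the induced $ds'$ on $\theta(H_z)$ is the $\theta$-pushforward of $ds$---is the delicate point, and it is exactly where the constancy of $J\xi$ (versus merely $J\xi$ being a smooth nonvanishing density, which suffices only for the one-directional statement) becomes essential for the converse.
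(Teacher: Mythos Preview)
Your strategy is sound and would work, but the paper takes a markedly simpler route that sidesteps exactly the bookkeeping you flag as the main obstacle. Instead of transporting the full machinery entering \eqref{SECOND} (the orbit parametrization $(Z,\lambda,\pi,o)$, the sections $q_z$, the stabilizer Haar measures normalized by Weil's formula), the paper invokes a more primitive admissibility criterion, namely Lemma~1 of \cite{dede10}: $\eta$ is admissible if and only if
\[
\int_Y\int_H\bigl|\omega_{f,\eta}(y,h)\bigr|^2\,\frac{dh\,dy}{\chi(h)\beta(h)}=\|f\|_2^2
\quad\text{for all }f,\qquad\text{where }\omega_{f,\eta}(y,h)=\int_Xf(x)\overline{\eta(h^{-1}x)}\,d\nu_y(x).
\]
This criterion involves only the fiber measures $\nu_y$ and the characters $\chi,\beta$; there is no orbit decomposition, no $S^z$, no stabilizer integral. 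The proof then computes $\omega_{f',\eta'}(y',\theta(h))$ directly via Lemma~\ref{measurenu} and Proposition~\ref{OMprop}(iv), obtaining after the substitutions $x'=\psi(x)$, $y'=\xi(y)$, $h'=\theta(h)$ the single identity \eqref{orbitequality}, where the residual $J\xi$-factor is absorbed into $\eta^*$ exactly as you anticipated. Proposition~\ref{OMprop}(iii) handles the mismatch between $\chi$ and $\chi'\circ\theta$ in the general orbit-map case without needing $\chi=\chi'\circ\theta$; the latter is used only for the converse.

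What your route buys is a direct translation at the level of the irreducible decomposition, which could be valuable if one wanted to compare the decompositions of $U$ and $U'$ explicitly. What the paper's route buys is economy: by working one level below \eqref{SECOND}, all of the delicate points you identify (transporting $q_z$, matching the Weil normalization of $ds$ versus $ds'$, ensuring no spurious $z$-dependent factor survives) simply never arise.
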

\begin{proof} Take $f,\eta\in L^2(X,dx)$ and put, for $y\in Y$ and $h\in H$
\[
\omega_{f,\eta}(y,h)=\int_Xf(x)\overline{\eta(h^{-1}x)}\,d\nu_{y}(x).
\]
By Lemma~1 in \cite{dede10}, $\eta$ is admissible if and only if for every $f\in L^2(X,dx)$
\begin{equation*}
\int_Y\int_H|\omega_{f,\eta}(y,h)|^2\,\frac{dh\,dy}{\chi(h)\beta(h)}=\|f\|_2^2.
\end{equation*}
Evidently, an analogous statement holds for the primed set. Take next $f',\eta'\in L^2(X',dx')$. By 
Lemma~\ref{measurenu} and (iv) of Proposition~\ref{OMprop}
\begin{align*}
\omega_{f',\eta'}(y',\theta(h))&=\int_{X'}f'(x')\overline{\eta'(\theta(h)^{-1}x')}\,d\nu_{y'}(xÕ)\\
&=\int_{X'}f'(x')\overline{\eta'(\theta(h)^{-1}x')}\,J\xi^{-1}(y')\,J\psi(\psi^{-1}(x'))\; d\left[\psi_{*}\nu_{\xi^{-1}(y')}\right](x')\\
(x=\psi^{-1}(x'))\hskip0.3truecm
&=J\xi^{-1}(y')\int_{X}f'(\psi(x))\overline{\eta'(\theta(h)^{-1}\psi(x))}\,\,J\psi(x)\; d\nu_{\xi^{-1}(y')}(x)\\
\text{(OM2)}\hskip0.3truecm&=J\xi^{-1}(y')\int_{X}Vf'(x)\overline{V\eta'(h^{-1}x))}\sqrt{\frac{J\psi(x)}{J\psi(h^{-1}x)}}\, d\nu_{\xi^{-1}(y')}(x)\\
&=J\xi^{-1}(y')\sqrt{\frac{\beta'(\theta(h))}{\beta(h)}}\,\omega_{Vf',V\eta'}(\xi^{-1}(y'),h).
\end{align*}
Now the requirement on Haar measures in (OM1) comes finally  into play:
\begin{align*}
\int_{Y'}\int_{H'}\left|\omega_{f',\eta'}(y',h')\right|^2\,\frac{dh'\,dy'}{\chi'(h')\beta'(h')}
&= \int_{Y'}\int_H\left|\omega_{f',\eta'}(y',\theta(h))\right|^2\,\frac{dh\,dy'}{\chi'(\theta(h))\beta'(\theta(h))}\\
&= \int_{Y'}\int_H J\xi^{-1}(y')^2\left|\omega_{Vf',V\eta'}(\xi^{-1}(y'),h)\right|^2    \,\frac{dh\,dy'}{\chi'(\theta(h))\beta(h)}\\
\hskip-0.3truecm (\xi(y)=y')\hskip0.3truecm
&= \int_{Y}\int_H\left|\omega_{Vf',V\eta'}(y,h)\right|^2\,\frac{1}{J\xi(y)}\,\frac{dh\,dy}{\chi'(\theta(h))\beta(h)}\\
&= \int_{Y}\int_H \left|\omega_{Vf',V\eta'}(y,h)\right|^2\frac{1}{J\xi(h^{-1}[y])}\,\frac{dh\,dy}{\chi(h)\beta(h)},
\end{align*}
where in the last line we have used (iii) of Proposition~\ref{OMprop}, namely
\[
J\xi(y)\chi'(\theta(h))=J\xi(y)\chi'(\theta(h^{-1}))^{-1}=J\xi(h^{-1}[y])\chi(h).
\]
Observe now that since $\nu_y$ is concentrated on $\Phi^{-1}(x)$,
\begin{align*}
\frac{\omega_{f,\eta}(y,h)}{\sqrt{J\xi(h^{-1}[y])}}
&=\int_X f(x)\frac{\overline{\eta(h^{-1}x)}}{\sqrt{J\xi(h^{-1}[y])}}d\nu_y(x)\\
&=\int_X f(x)\frac{\overline{\eta(h^{-1}x)}}{\sqrt{J\xi(h^{-1}[\Phi(x)])}}d\nu_y(x)\\
&=\omega_{f,\eta^*}(y,h).
\end{align*}
We may finally conclude that
\begin{equation}
\int_{Y'}\int_{H'}\left|\omega_{f',\eta'}(y',h')\right|^2\,\frac{dh'\,dy'}{\chi'(h')\beta'(h')}
= \int_{Y}\int_H \left|\omega_{Vf',(V\eta')^*}(y,h)\right|^2\,\frac{dh\,dy}{\chi(h)\beta(h)}.
\label{orbitequality}
\end{equation}
Therefore, if $(W\eta')^*=\eta$ is admissible, then the right-hand side of \eqref{orbitequality}
 is equal to 
$\|Wf'\|_2^2=\|f'\|_2^2$. Hence, if 
$W\eta'=\sqrt{J\xi\circ\Phi}\,\eta \in L^2(X,dx)$, then $\eta'\in L^2(X',dx')$ because $W$ is unitary.
Under these circumstances, the left-hand side of \eqref{orbitequality} is thus equal to $\|f'\|_2^2$, 
and this says precisely that $\eta'$ is admissible. This proves the first statement of the theorem. 
The second one is clear because if $\sqrt{J\xi\circ\Phi}$ is constant, then 
$W\eta'$  is in $L^2(X,dx)$ if and only if $\eta \in L^2(X,dx)$, so $\eta'$ is admissible if and only if $\eta$ is such.
\end{proof}

\begin{remark}\label{iff} The very end of the proof of the theorem shows that under the assumption that $(\theta,\psi,\xi)$ is an orbit equivalence, that is, that  $J\xi$ is constant, the two groups $G$ and $G'$  are either both reproducing or neither of them is, and in the positive case the admissible vectors are in one-to-one correspondence. In the next section we will present many examples in which this situation occurs.\end{remark}

\begin{remark} If $\eta$ is an admissible vector for $G$ and has compact support, then \eqref{etastar} is trivially 
in $L^2$. Hence, if $G$ has an admissible vector with compact support, any group $G'$ belonging to a data set for which an orbit map between the data sets of $G$ and $G'$ exists is also reproducing.
\end{remark}
\section{Admissible vectors}\label{AV}
For the reader's convenience, we recall the classification of the groups in  $\mathcal
E_2$ obtained in \cite{albadede11} using a  more explicit notation. For $t\in\R$ we put
$$
R_t=\begin{bmatrix}
               \cos t & \sin t \\
               -\sin t & \cos t \\
             \end{bmatrix},\qquad  
   A_t=\begin{bmatrix}
               \cosh t & \sinh t \\
               \sinh t & \cosh t \\
             \end{bmatrix},
$$
and we define the subspaces of ${\rm Sym(2,\R)}$
\begin{align*}
{} & \Sigma_1=\bigl\{\begin{bmatrix}
u &0  \\
0&u \\
\end{bmatrix}:u\in\R\bigr\} &&
\Sigma_2=\bigl\{\begin{bmatrix}
u &0  \\
0&- u \\
\end{bmatrix}:u\in\R\bigr\} &&
\Sigma_3=\bigl\{
\begin{bmatrix}
                           u & 0\\
                            0 & 0 \\
                          \end{bmatrix}:
                          u\in\R\bigr\}  \\
& \Sigma_1^\perp=\bigl\{\begin{bmatrix}
u &v \\
v&-u \\
\end{bmatrix}:u,v\in\R\bigr\}
&& \Sigma_2^\perp=\bigl\{\begin{bmatrix}
u &v \\
v&u \\
\end{bmatrix}:u,v\in\R\bigr\}&&
\Sigma_3^\perp=\bigl\{
\begin{bmatrix}
                           0 & v\\
                            v & u \\
                          \end{bmatrix}:
                          u,v\in\R\bigr\}.
\end{align*}
The statement below is  a more readable version of Theorem~(1.1) in Part~I.
A comment on the notation. In four cases, we index a family of groups by means
of a parameter $\alpha\in[0,+\infty]$. The meaning for $\alpha=\infty$  is that of a limit as $\alpha\to+\infty$.
For example, in the case (2.1) we have  
$\Sigma_1\rtimes\{e^tR_{\alpha t}:t\in\R\}=
\Sigma_1\rtimes\{e^{s/\alpha}R_{s}:s\in\R\}$
so that the limit group is $\Sigma_1\rtimes\{R_{s}:s\in\R\}$.

\begin{theorem}[Theorem~1.1 \cite{albadede11}]\label{THELIST}
The following is a complete list, up to $Sp(2,\R)$-conjugation, of the
groups in $\mathcal E_2$ with $1\leq\dim\Sigma\leq2$.\\
Two dimensional groups:
\begin{enumerate}[(2.1)]
\item $ \Sigma_1\rtimes \{e^tR_{\alpha t}:t\in\R\}$, $\alpha\in[0,+\infty]$
\item $\Sigma_2\rtimes \{e^t A_{\alpha t}:t\in\R\}$, $\alpha\in[0,+\infty]$
\item $\Sigma_3\rtimes 
\bigl\{\left[\begin{smallmatrix}
1&0 \\
t&1 \\
\end{smallmatrix}\right]
:t\in\R\bigr\}$
\item  $\Sigma_3\rtimes 
\bigl\{e^t\left[\begin{smallmatrix}
                                         1 & 0 \\
                                         t & 1 \\
                                       \end{smallmatrix}\right]
:t\in\R\bigr\}$

\item $\Sigma_3\rtimes 
\bigl\{\left[\begin{smallmatrix}
                                         e^{\alpha t} & 0 \\
                                         0 & e^{(\alpha+1)t} \\
                                       \end{smallmatrix}\right]
:t\in\R\bigr\}$, $\alpha\in[-1,0]$
\end{enumerate}
Three dimensional groups:
\begin{enumerate}[(3.1)]
\item $\Sigma_1\rtimes \{e^tR_s :t,s\in\R\}$
\item $\Sigma_2\rtimes \{e^t A_s:t,s\in\R\}$
\item $\Sigma_3\rtimes 
\bigl\{\left[\begin{smallmatrix}
e^t & 0 \\
0 & e^s \\
\end{smallmatrix}\right]
:s,t\in\R\bigr\}$
\item $\Sigma_3\rtimes 
\bigl\{\left[\begin{smallmatrix}
e^t & 0 \\
s & e^t \\
\end{smallmatrix}\right]
:s,t\in\R\bigr\}$
\item $\Sigma_3\rtimes 
\bigl\{\left[\begin{smallmatrix}
                                         e^{\alpha t} & 0 \\
                                         s & e^{(\alpha+1)t} \\
                                       \end{smallmatrix}\right]
:s,t\in\R\bigr\}$, $\alpha\in\R$, $\alpha\neq1/2$
\item $\Sigma_1^\perp\rtimes \{e^tR_{\alpha t}:t\in\R\}$, $\alpha\in[0,+\infty]$
\item $\Sigma_2^\perp\rtimes \{e^tA_{\alpha t}:t\in\R\}$, $\alpha\in[0,+\infty]$
\item $\Sigma_3^\perp\rtimes \bigl\{\left[\begin{smallmatrix}
1&t \\
0&1 \\
\end{smallmatrix}\right]
:t\in\R\bigr\}$
\item $\Sigma_3^\perp
\rtimes \bigl\{e^t\left[\begin{smallmatrix}
                                         1 & t \\
                                         0 & 1 \\
                                       \end{smallmatrix}\right]
:t\in\R\bigr\}$
\end{enumerate}
Four dimensional groups:
\begin{enumerate}[(4.1)]
\item $\Sigma_3\rtimes 
\bigl\{\left[\begin{smallmatrix}
a&0 \\
b&c \\
\end{smallmatrix}\right]
:a,b,c\in\R,\;a>0,c>0\bigr\}$
\item $\Sigma_1^\perp\rtimes \{e^tR_s :t,s\in\R\}$
\item $\Sigma_2^\perp\rtimes  \{e^t A_s:t,s\in\R\}$
\item $\Sigma_3^\perp\rtimes\bigl\{\left[\begin{smallmatrix}
                                         e^{\alpha t} & s \\
                                         0& e^{(\alpha+1)t} \\
                                       \end{smallmatrix}\right]
:s,t\in\R\bigr\}$, $\alpha\in [-1,0]$
\end{enumerate}
Five dimensional groups:
\begin{enumerate}[(5.1)]
\item $\Sigma_3^\perp
\rtimes \bigl\{\left[\begin{smallmatrix}
c&b \\
0&a \\
\end{smallmatrix}\right]
:a,b,c\in\R,\;a>0,c>0\bigr\}$.
\end{enumerate}
\end{theorem}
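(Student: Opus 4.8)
The classification is carried out at the level of pairs $(\Sigma,H)$, exploiting the fact that $Sp(2,\R)$-conjugation of groups of the prescribed block form reduces to a manageable action. The first step is to show that if $G,G'\in\mathcal E_2$ are conjugate in $Sp(2,\R)$, then they are already conjugate by an element of the Siegel parabolic $P=\bigl\{\left[\begin{smallmatrix} m & 0\\ \sigma m & {}^tm^{-1}\end{smallmatrix}\right]: m\in GL(2,\R),\ \sigma\in\mathrm{Sym}(2,\R)\bigr\}$. Intuitively this holds because the normal abelian factor $\Sigma$, sitting in the lower-left block, is intrinsically attached to the unipotent radical of $P$, so any symplectic conjugacy must carry the triangular structure to itself; making this precise is a normalizer computation inside $Sp(2,\R)$. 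A direct matrix computation then shows that conjugation by $\left[\begin{smallmatrix} m & 0\\ 0 & {}^tm^{-1}\end{smallmatrix}\right]$ acts by $\Sigma\mapsto m^\dagger[\Sigma]$ (a congruence) and $H\mapsto mHm^{-1}$ simultaneously, while the $\sigma$-translations fix the pair; so the whole problem becomes the classification of pairs $(\Sigma,H)$ up to the simultaneous $GL(2,\R)$-action.

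Second, I would classify the admissible $\Sigma$ up to congruence. Since $\mathrm{Sym}(2,\R)$ is three-dimensional and the relevant action is $\sigma\mapsto {}^tm^{-1}\sigma m^{-1}$, Sylvester's law of inertia gives, for $\dim\Sigma=1$, exactly three orbits of nonzero lines --- definite, indefinite and degenerate --- with representatives $\Sigma_1=\R I$, $\Sigma_2=\R\,\mathrm{diag}(1,-1)$ and $\Sigma_3=\R\,\mathrm{diag}(1,0)$. For $\dim\Sigma=2$ I would pass to the trace-orthogonal complement (a line in $\mathrm{Sym}(2,\R)$, whose congruence class is again an invariant), reducing to the one-dimensional case and producing the representatives $\Sigma_1^\perp,\Sigma_2^\perp,\Sigma_3^\perp$. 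This accounts for the six possible normal factors appearing in the statement.

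Third, for each representative $\Sigma$ I would compute its stabilizer $N_\Sigma=\{m\in GL(2,\R): m^\dagger[\Sigma]=\Sigma\}$ and then classify the connected subgroups $H\subseteq (N_\Sigma)^\circ$, since the group condition $h^\dagger[\Sigma]=\Sigma$ is exactly $H\subseteq N_\Sigma$. For instance $N_{\Sigma_1}^\circ=\R^+\cdot SO(2)=\{e^tR_s\}$, $N_{\Sigma_2}^\circ=\R^+\cdot SO(1,1)=\{e^tA_s\}$, and $N_{\Sigma_3}^\circ$ is the lower-triangular group with positive diagonal, i.e. the four-dimensional group of case (4.1). Passing to Lie algebras, the task is to list subalgebras $\mathfrak h\subseteq\mathfrak n_\Sigma$ of each dimension up to the adjoint action of the residual symmetries (the disconnected pieces of $N_\Sigma$ together with the remaining congruences fixing $\Sigma$). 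In the abelian algebras of $\{e^tR_s\}$ and $\{e^tA_s\}$ the one-dimensional subalgebras are lines of slope $\alpha$, and the reflection symmetries fixing $\Sigma$ fold the parameter down to $\alpha\in[0,+\infty]$ (with $\alpha=\infty$ the ``vertical'' line); this yields families (2.1), (2.2) and, through the complementary factors, (3.6), (3.7), (4.2), (4.3). The non-abelian algebra $\mathfrak n_{\Sigma_3}$ furnishes the shear-type families (2.3)--(2.5), (3.3)--(3.5), (4.1), and via $\Sigma_3^\perp$ the families (3.8), (3.9), (4.4), (5.1).

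Finally, I would prove irredundancy by exhibiting conjugacy invariants: the dimension and congruence class of $\Sigma$ separate the six blocks, and within each parametrized family a finer invariant --- the character $\chi$, the eigenvalue data of the $\mathrm{ad}$-action defining $H$, and unimodularity/modular behaviour --- separates distinct parameter values and identifies the stated ranges as fundamental domains of the residual symmetry. The main obstacle is twofold. The genuinely symplectic input is the reduction in the first step, namely that an arbitrary $Sp(2,\R)$-conjugacy between two groups of triangular form can be realized inside $P$; this must be established carefully, as it is what makes the pair $(\Sigma,H)$ a complete invariant. The second delicate point is the subalgebra bookkeeping for $\mathfrak n_{\Sigma_3}$ and $\mathfrak n_{\Sigma_3^\perp}$, where one must pin down the exact parameter ranges and, in particular, show that distinct values of $\alpha$ in the shearlet family (4.4) (and its lower-dimensional relatives) yield genuinely non-conjugate groups rather than mere reparametrizations.
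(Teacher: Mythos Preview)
The paper does not contain a proof of this theorem: it is restated from Part~I (\cite{albadede11}) for convenience, and no argument is given here. So there is no proof in this paper to compare against, only the classification itself together with some hints scattered through Section~\ref{AV} about how the symplectic conjugacies actually behave.

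Those hints, however, expose a genuine gap in your first step. You assert that two groups in $\mathcal E_2$ conjugate in $Sp(2,\R)$ are already conjugate by an element of the Siegel parabolic $P$, and hence that the congruence class of $\Sigma$ is a complete invariant. This is false. The paper itself, in the discussion of (3.4) and (3.5), exhibits the symplectic permutation
\[
w=\begin{bmatrix}0&0&0&-1\\-1&0&0&0\\0&1&0&0\\0&0&-1&0\end{bmatrix},
\]
which is \emph{not} in $P$ (the upper-right block is nonzero) and conjugates
$\Sigma_3\rtimes\bigl\{\left[\begin{smallmatrix}e^t&0\\s&e^t\end{smallmatrix}\right]\bigr\}$
into
$\Sigma_3^\perp\rtimes\bigl\{\left[\begin{smallmatrix}e^t&0\\0&e^{-t}\end{smallmatrix}\right]\bigr\}$,
a group in $\mathcal E_2$ with a \emph{different} congruence class of $\Sigma$ (degenerate rank-one line versus a two-plane containing an indefinite form). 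The same phenomenon explains why the list of three-dimensional groups with $\Sigma_3^\perp$ contains (3.8) and (3.9) but no diagonal $H$: those diagonal cases are already accounted for under $\Sigma_3$ as (3.4) and (3.5). Likewise the paper remarks, for the shearlet family (4.4), that the parameter range is cut down to $\alpha\in[-1,0]$ precisely because a Weyl-group matrix conjugates the remaining values back into that range.

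So your scheme of classifying pairs $(\Sigma,H)$ up to the $GL(2,\R)$-action would produce a list with redundancies that you could not detect by your proposed invariants (dimension and congruence class of $\Sigma$, the character $\chi$, etc.), since those invariants are not actually $Sp(2,\R)$-conjugacy invariants. The correct argument must incorporate the Weyl-group coset representatives outside $P$ and track which pairs $(\Sigma,H)$ and $(\Sigma',H')$ they identify; this is the ``genuinely symplectic input'' you flagged as delicate, and it does not go the way you hoped.
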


We start by ruling out a number of groups in the preceding list, which will turn out to be complete. The fact that the other groups in the list are reproducing will be established in the next section, where the admissible vectors are computed. 

\begin{prop}\label{prop:not_repr}
The following groups are not reproducing
\begin{itemize}
\item $(2.1, \alpha=\infty)\quad \Sigma_1\rtimes \{R_t:t\in\R\}$
\item $(2.2, \alpha=\infty) \quad\Sigma_2 \rtimes \{A_t:t\in\R\}$
\item $(2.3)\quad \Sigma_3\rtimes \bigl\{\left[\begin{smallmatrix}
1&0 \\
t&1 \\
\end{smallmatrix}\right]
:t\in\R\bigr\}$
\item $(2.5, \alpha=0) \quad\Sigma_3\rtimes \bigl\{\left[\begin{smallmatrix}
                                         1 & 0 \\
                                         0 & e^{t} \\
                                       \end{smallmatrix}\right]
:t\in\R\bigr\}$

\item $(3.5,\alpha=1/2),\quad \Sigma_3\rtimes \bigl\{\left[\begin{smallmatrix}
                                         e^{ t} & 0 \\
                                         s & e^{3t} \\
                                       \end{smallmatrix}\right]
:s,t\in\R\bigr\}$
  \item $(3.6, \alpha=\infty) \quad\Sigma_1^\perp \rtimes \{R_t:t\in\R\}$

  \item $(3.7, \alpha=\infty)\quad \Sigma_2^\perp \rtimes \{A_t:t\in\R\}$

\item $(3.8)\quad \Sigma_3^\perp\rtimes \bigl\{\left[\begin{smallmatrix}
1&t \\
0&1 \\
\end{smallmatrix}\right]
:t\in\R\bigr\}$

\item $(4.4, \alpha=-1) \quad\Sigma_3^\perp\rtimes\bigl\{\left[\begin{smallmatrix}
                                         t & s \\
                                         0& 1 \\
                                       \end{smallmatrix}\right]
:s\in\R,t>0\bigr\}$
\item $(5.1)\quad \Sigma_3^\perp\rtimes \bigl\{\left[\begin{smallmatrix}
c&b \\
0&a \\
\end{smallmatrix}\right]
:a,b,c\in\R,\;a>0,c>0\bigr\}$
\end{itemize}
\end{prop}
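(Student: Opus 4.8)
The plan is to rule out each group by exhibiting a violated necessary condition from Theorem~\ref{th:suffnec}. Every group on the list has $\dim{\Sigma}\le 2=d$, so condition~\ref{1}) always holds and cannot be the reason for failure; thus the obstruction must be non-unimodularity~\ref{2}), negligibility of the critical set~\ref{3}), or --- only when $\dim{\Sigma}=2$ --- compactness of the generic stabilizers (condition~iv). My expectation is that all the groups except $(5.1)$ violate~\ref{2}) because they are unimodular, while $(5.1)$ is eliminated by~iv); exhibiting one failing condition per group suffices.

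For the unimodularity argument I would use \eqref{eq:deltaG}, which gives $\Delta_G(\sigma,h)=\chi(h)^{-1}\Delta_H(h)$, so that $G$ is unimodular exactly when $\chi(h)=\Delta_H(h)$ for every $h\in H$. The character $\chi$ is read off from \eqref{eq:alpha}: one writes the matrix $M_h$ of the action $\sigma\mapsto h^\dag[\sigma]$ of \eqref{dagger1} in the chosen basis of $\Sigma$ and sets $\chi(h)=\abs{\det{M_h}}$. For the seven groups $(2.1,\alpha=\infty)$, $(2.2,\alpha=\infty)$, $(2.3)$, $(2.5,\alpha=0)$, $(3.6,\alpha=\infty)$, $(3.7,\alpha=\infty)$ and $(3.8)$ the subgroup $H$ is one-parameter, hence abelian and unimodular, so $\Delta_H\equiv 1$; a short matrix computation shows in each case that $\det{M_h}=\pm1$, i.e. $\chi\equiv1$. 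These are precisely the degenerate members of their families, in which the dilation factor has been turned off and only a volume-preserving action of $H$ on $\Sigma$ survives. Consequently $\Delta_G\equiv1$ and~\ref{2}) fails.

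The two remaining unimodular cases, $(3.5,\alpha=1/2)$ and $(4.4,\alpha=-1)$, have a genuinely non-abelian two-dimensional $H$, so here I must also compute $\Delta_H$. In $(4.4,\alpha=-1)$ the factor $H=\bigl\{\left[\begin{smallmatrix}a&s\\0&1\end{smallmatrix}\right]:a>0,\ s\in\R\bigr\}$ is the $ax+b$ group, for which $\Delta_H=a^{-1}$; computing the action on $\Sigma_3^\perp$ yields $\chi=a^{-1}$ as well, so $\Delta_G\equiv1$. In $(3.5,\alpha)$ I would parametrise $H$ by $(t,s)$ and obtain, by the usual Jacobian computation of left and right Haar measures, $\Delta_H=e^{-t}$, while the action on $\Sigma_3$ gives $\chi=e^{-2\alpha t}$; hence $\Delta_G=e^{(2\alpha-1)t}$, which is trivial exactly when $\alpha=1/2$. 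Thus both special parameters sit precisely at the unimodular boundary of their respective families, confirming that only these values are excluded.

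Finally, $(5.1)$ is the only listed group with $\dim{\Sigma}=2=d$ that need not be unimodular, and I would dispose of it through condition~iv). Writing $\Sigma_3^\perp=\lspan{K,E_{22}}$ with $K=\left[\begin{smallmatrix}0&1\\1&0\end{smallmatrix}\right]$, formula \eqref{eq:defPhi} gives $\Phi(x)=(-x_1x_2,-\frac{1}{2}x_2^2)$ in the dual coordinates, so $\Phi(\R^2)$ contains the open set $\{y_2<0\}$. A direct computation shows that $M_h$ is lower triangular with positive diagonal, whence the dual action~\eqref{semidirect} is by upper triangular matrices with positive diagonal; this $3$-parameter group acts on $\{y_2<0\}$ with open orbits, so a generic stabilizer is one-dimensional and hence non-compact. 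Since almost every point actually attained by $\Phi$ lies in this open-orbit region, condition~iv) fails. The main obstacle throughout is the careful bookkeeping of the modular functions $\Delta_H$ for the non-abelian factors in $(3.5)$ and $(4.4)$ and, for $(5.1)$, ensuring that the non-compact stabilizers occur not merely at a generic point of $\R^2$ but at almost every point of the image $\Phi(\R^2)$ --- which is exactly why identifying $\Phi(\R^2)$ with (essentially) the half-plane $\{y_2<0\}$ inside the open-orbit set is indispensable.
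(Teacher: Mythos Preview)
Your approach is essentially the same as the paper's: both invoke Theorem~\ref{th:suffnec}, show that nine of the ten groups are unimodular (hence violate condition~\ref{2})), and eliminate $(5.1)$ via non-compact stabilizers (condition~iv)). Your write-up is in fact more explicit about the modular-function computations for the non-abelian factors in $(3.5,\alpha=1/2)$ and $(4.4,\alpha=-1)$, whereas the paper simply asserts unimodularity.

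Two small points worth tightening. First, Theorem~\ref{th:suffnec} is stated under the standing Assumption~\ref{H1} (locally closed orbits); the paper notes that this is verified case by case, and you should at least remark that it holds here (for all the unimodular cases it is trivial, and for $(5.1)$ your own computation shows a single open orbit). Second, in your $(5.1)$ argument the inference ``one-dimensional, hence non-compact'' is not automatic for Lie groups in general; it holds here because $H$ is diffeomorphic to $\R^3$ and hence contains no circle subgroups, so you might add one clause to that effect. Concretely, your dual action ${}^tM_h^{-1}=\left[\begin{smallmatrix}ac&2ab\\0&a^2\end{smallmatrix}\right]$ has the whole half-plane $\{y_2<0\}$ as a single orbit with stabilizer $\{(a,b,c)=(1,0,c):c>0\}\simeq\R_+$, which is exactly the non-compactness the paper records (there phrased in terms of the representatives $\left[\begin{smallmatrix}0&q\\q&0\end{smallmatrix}\right]$).
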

\begin{proof}
All cases are treated using Theorem \ref{th:suffnec}.  We present all the details regarding the first group and we simply indicate what happens with the others.

Consider $\Sigma_1\rtimes \{R_t:t\in\R\}$. Assumption \ref{H1} is satisfied because the orbits are trivial. By \eqref{semidirect}, for $a,t\in\R$, we have $R_t[a\sigma_1]=a\sigma_1$ and so by \eqref{eq:alpha} we get $\alpha(R_t)=1$. Hence, since $\{R_t:t\in\R\}$ is Abelian, by \eqref{eq:deltaG}, $\Sigma_1\rtimes \{R_t:t\in\R\}$ is unimodular.  Therefore $\Sigma_1\rtimes \{R_t:t\in\R\}$ is not reproducing by Theorem \ref{th:suffnec}.

 Assumption \ref{H1} is always satisfied for all the remaining groups
 in the statement, and the case-by-case verification is
 straightforward. The following groups are not reproducing because
 they are unimodular.
 \begin{align*}
 & \Sigma_1^\perp \rtimes \{R_t:t\in\R\} 
&& \Sigma_2 \rtimes \{A_t:t\in\R\} 
&&   \Sigma_2^\perp \rtimes \{A_t:t\in\R\}  
\\ 
 & \Sigma_3\rtimes \bigl\{\left[\begin{smallmatrix}
1&0 \\
t&1 \\
\end{smallmatrix}\right]
:t\in\R\bigr\}  
&&  \Sigma_3\rtimes \bigl\{\left[\begin{smallmatrix}
                                         1 & 0 \\
                                         0 & e^{t} \\
                                       \end{smallmatrix}\right]
:t\in\R\bigr\} 
&&  \Sigma_3\rtimes \bigl\{\left[\begin{smallmatrix}
                                         e^{ t} & 0 \\
                                         s & e^{3t} \\
                                       \end{smallmatrix}\right]
\\
& \Sigma_3^\perp\rtimes \bigl\{\left[\begin{smallmatrix}
1&t \\
0&1 \\
\end{smallmatrix}\right]
:t\in\R\bigr\}  
&&  \Sigma_3^\perp\rtimes\bigl\{\left[\begin{smallmatrix}
                                         t & s \\
                                         0& 1 \\
                                       \end{smallmatrix}\right]
:s\in\R,t>0\bigr\} &&
 \end{align*}
The group $\Sigma_3^\perp\rtimes \bigl\{\left[\begin{smallmatrix}
c&b \\
0&a \\
\end{smallmatrix}\right]
:a,b,c\in\R,\;a>0,c>0\bigr\}$ is not reproducing because $n=d=2$ and for every $q\in\R$ the elements $\left[\begin{smallmatrix}
0&q \\
q&0 \\
\end{smallmatrix}\right]\in\Sigma_3^\perp$ have stabilizers that are not compact. These points form a (redundant) list of representatives for a set of orbits that fills up a set whose complement has measure zero.
\end{proof}
We are in a position to state our main result.
\begin{theorem}\label{main} The following is a complete list of non-conjugate\footnote{Within $Sp(2,\R)$.}
 reproducing groups in
${\mathcal E}_2$:
\begin{enumerate}[$(2D.1)$]
\item $\Sigma_1\rtimes \{e^tR_{\alpha t}:t\in\R\}$, $\alpha\in[0,+\infty)$
\item $\Sigma_2\rtimes \{e^t A_{\alpha t}:t\in\R\}$, $\alpha\in[0,+\infty)$
\item $\Sigma_3\rtimes 
\bigl\{\left[\begin{smallmatrix}
1&0 \\
t&1 \\
\end{smallmatrix}\right]
:t\in\R\bigr\}$
\item $\Sigma_3\rtimes \bigl\{\left[\begin{smallmatrix}
                                         e^{\alpha t} & 0 \\
                                         0 & e^{(\alpha+1)t} \\
                                       \end{smallmatrix}\right]
:t\in\R\bigr\}$, $\alpha\in[-1,0)$
\end{enumerate}
\begin{enumerate}[$(3D.1)$]
\item $\Sigma_1\rtimes \{e^tR_s :t,s\in\R\}$
\item $\Sigma_2\rtimes \{e^t A_s:t,s\in\R\}$
\item $\Sigma_3\rtimes \bigl\{\left[\begin{smallmatrix}
e^t & 0 \\
0 & e^s \\
\end{smallmatrix}\right]
:s,t\in\R\bigr\}$
\item $\Sigma_3\rtimes \bigl\{\left[\begin{smallmatrix}
e^t & 0 \\
s & e^t \\
\end{smallmatrix}\right]
:s,t\in\R\bigr\}$
\item $\Sigma_3\rtimes \bigl\{\left[\begin{smallmatrix}
                                         e^{\alpha t} & 0 \\
                                         s & e^{(\alpha+1)t} \\
                                       \end{smallmatrix}\right]
:s,t\in\R\bigr\}$, $\alpha\in\R\setminus\{1/2\}$
\item $\Sigma_1^\perp\rtimes \{e^tR_{\alpha t}:t\in\R\}$, $\alpha\in[0,+\infty)$
\item $\Sigma_2^\perp\rtimes \{e^tA_{\alpha t}:t\in\R\}$, $\alpha\in[0,+\infty)$
\item $\Sigma_3^\perp\rtimes \bigl\{e^t\left[\begin{smallmatrix}
                                         1 & t \\
                                         0 & 1 \\
                                       \end{smallmatrix}\right]
:t\in\R\bigr\}$
\end{enumerate}
\begin{enumerate}[$(4D.1)$]
\item $\Sigma_3\rtimes \bigl\{\left[\begin{smallmatrix}
a&0 \\
b&c \\
\end{smallmatrix}\right]
:a,b,c\in\R,\;a>0,c>0\bigr\}$
\item $\Sigma_1^\perp\rtimes \{e^tR_s :t,s\in\R\}$
\item $\Sigma_2^\perp\rtimes  \{e^t A_s:t,s\in\R\}$
\item $\Sigma_3^\perp\rtimes\bigl\{\left[\begin{smallmatrix}
                                         e^{\alpha t} & s \\
                                         0& e^{(\alpha+1)t} \\
                                       \end{smallmatrix}\right]
:s,t\in\R\bigr\}$, $\alpha\in (-1,0]$
\end{enumerate}
\end{theorem}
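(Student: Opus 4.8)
The plan is to split the theorem into two independent tasks: a bookkeeping step that settles completeness and non-conjugacy, and a case-by-case step that shows each listed group is reproducing. For the bookkeeping, recall that Theorem~\ref{THELIST} lists every $Sp(2,\R)$-conjugacy class in $\mathcal E_2$ with $1\le\dim{\Sigma}\le2$ exactly once, and that the reproducing property is a conjugation invariant by Remark~\ref{conjugate}; hence reproducibility can be decided on the chosen representatives. Deleting from the list of Theorem~\ref{THELIST} the groups shown to be non-reproducing in Proposition~\ref{prop:not_repr} (the excluded parameter values together with the non-parametric exceptions listed there), one checks directly that the survivors are precisely the families (2D.1)--(4D.4). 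Pairwise non-conjugacy is then inherited verbatim from Theorem~\ref{THELIST}, and completeness follows once every survivor is proved reproducing, so the theorem reduces to the positive statements.

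For the positive direction I would first use orbit equivalence to cut down the workload. By Theorem~\ref{OE} and Remark~\ref{iff}, orbitally equivalent data sets are simultaneously reproducing; thus for every parametrised family except the shearlet family (4D.4) it suffices to construct an orbit equivalence $(\theta,\psi,\xi)$ onto one fixed member and then to treat that member alone. The required maps are read off from the explicit one- or two-parameter actions, and verifying the constancy of $J\xi$ demanded by Definition~\ref{OEQ} is a short computation. This collapses the parametrised families to single representatives, leaving a short finite list of groups to analyse by hand.

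For each of these representatives I would apply the criterion of Theorem~\ref{th:suffnec}. The steps are: check Assumption~\ref{H1}, which is immediate since the orbits are images of one- or two-parameter subgroups and hence locally closed; compute the symbol $\Phi$ from \eqref{eq:defPhi} together with its Jacobian $J\Phi$, exhibit the conull $H$-invariant open set $X$ on which $J\Phi>0$, and confirm that the critical set is Lebesgue-negligible; read off non-unimodularity from \eqref{eq:deltaG}; and, in the equidimensional cases $\dim{\Sigma}=d=2$, verify that the stabilizers $H_y$ are compact for almost every $y$. Theorem~\ref{th:suffnec} then yields reproducibility. To exhibit genuine admissible vectors I would solve the associated Calder\'on equation: Theorem~\ref{th:corollary 4} when $\dim{\Sigma}=d=2$, where $\Phi^{-1}(o(z))$ is finite and \eqref{eq:corollary 4} becomes a finite orthonormality system, and Theorem~\ref{adm-vec1} or Theorem~\ref{th:9} when $\dim{\Sigma}<d$, where the fibres are positive-dimensional and the direct-integral apparatus of Section~\ref{sub:analytic} is required.

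The hardest part is expected to be the shearlet family (4D.4), $\alpha\in(-1,0]$. Here $\dim{\Sigma}=d=2$, so Theorem~\ref{th:corollary 4} is in principle available, but, as announced in the introduction, no orbit equivalence exists across the parameter, so the reduction shortcut fails and the orbit space, the stabilizers, and the critical set of $\Phi$ must be analysed uniformly in $\alpha$. Proving compactness of the generic stabilizer and negligibility of the critical set for the entire range, and then producing a solution of \eqref{eq:corollary 4} valid for all admissible $\alpha$, is the technically demanding point; once each of the other families has been reduced to a representative, its treatment is comparatively routine.
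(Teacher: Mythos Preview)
Your overall strategy matches the paper's: bookkeeping via Theorem~\ref{THELIST} and Proposition~\ref{prop:not_repr}, orbit equivalence to collapse parametrised families, and case-by-case verification. There is, however, a genuine gap in the third paragraph. You propose to verify condition~iv) of Theorem~\ref{th:suffnec} only ``in the equidimensional cases $\dim{\Sigma}=d=2$'' and then conclude reproducibility from that theorem. But the \emph{converse} direction of Theorem~\ref{th:suffnec} requires compactness of the generic stabilizer regardless of $\dim\Sigma$, and this fails for exactly three representatives with $\dim\Sigma=1$: in (3D.2) the stabilizer is $\{A_s:s\in\R\}$, in (3D.3) it is $\{\operatorname{diag}(1,c):c>0\}$, and in (4D.1) it is isomorphic to the full affine group of the line. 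For these three groups Theorem~\ref{th:suffnec} gives no conclusion whatsoever, and reproducibility can only be established \emph{a posteriori} by producing an explicit admissible vector via Theorem~\ref{adm-vec1}. The paper does precisely this, and the constructions are not routine: case (3D.3) requires diagonalising the non-compact quasi-regular representation with the Mellin transform before an admissible $\eta$ can be written down, and case (4D.1) uses the orthogonality relations of the one-dimensional wavelet representation together with a separation-of-variables argument. Your plan treats Theorem~\ref{adm-vec1} as a tool for \emph{describing} admissible vectors once existence is known, but here it is the only route to existence.

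You have also misplaced the difficulty. The shearlet family (4D.4) has $\dim\Sigma=d=2$ and trivial stabilizers, so Theorem~\ref{th:suffnec} applies uniformly in $\alpha$ and reproducibility is immediate; the absence of orbit equivalence only means the admissible-vector equations must be written for general $\alpha$, and after a change of variable they turn out to be $\alpha$-independent. The genuinely delicate cases are the three with non-compact stabilizer listed above, where the existence of an admissible vector is the entire content and must be proved by hand.
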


The proof proceeds by case by case analysis. In the course of the proof we also establish the full set of admissible vectors.
\subsection{Two dimensional groups}
 \paragraph{(2.1), $\alpha\in[0,+\infty)$} 
The elements of $\Sigma_1\rtimes H_\alpha$, with  
 $H_\alpha=\{h_t:=e^tR_{\alpha t}:t\in\R\}$, are 
\[
(u;t):=
\begin{bmatrix}e^{t}R_{\alpha t}&0\\ue^{t}R_{\alpha t}&e^{-t}R_{\alpha t}\end{bmatrix}.
\]
A left Haar measure of $H_\alpha$ is the pushforward of the Lebesgue measure $dt$ under $t\mapsto h_t$, and all $H_\alpha$ are unimodular. The action \eqref{semidirect} is
$h_t[y]=e^{2t}y$ and hence $\beta(t)=e^{2t}$ and $\chi(t)=e^{-2t}$. The intertwining is $\Phi(x)=-\|x\|^2/2$, with Jacobian is $J\Phi(x)=\|x\|$. Hence we put  $X_\alpha=\R^2\setminus\{(0,0)\}$ and $Y_\alpha=(-\infty,0)$.
The representation is
\[
U_{(u;t)}f(x_1,x_2)=e^{-t}e^{\pi
  iu(x_1^2+x_2^2)}f\left(e^{-t}R_{-\alpha t} (x_1,x_2)\right),
\]
here and in the following examples the vector $(x_1,x_2)$ has to be
understood as the column vector $[
\begin{smallmatrix}
  x_1 \\ x_2
\end{smallmatrix}]$.
These groups are mutually orbitally equivalent.   For any fixed $\alpha\not=0$, we define the orbit equivalence $(\theta,\psi,\xi)$ between $(\Sigma_1\rtimes H_0, dt, X_0, Y_0)$  and $(\Sigma_1\rtimes H_\alpha, dt, X_\alpha, Y_\alpha)$ as follows. First, $\theta:H_0\to H_\alpha$  is the group isomorphism $\theta(e^tI_2)=e^{t}R_{\alpha t}$. Further, we parametrize the elements in  $X_0$ and $X_\alpha$ with the  usual polar
 coordinates $(\rho,\theta)$ and define $\psi:X_0\to X_\alpha$ by $\psi(\rho,\theta)=(\rho,\theta+\alpha\log\rho)$.
 Finally, $\xi$ is the identity on $(-\infty,0)$. Both (OM1) and (OM3) are obvious, and for $h=e^tI_2\in H_0$
 \[
 \psi(h.(\rho,\theta))
 =\psi(e^t\rho,\theta)
 =(e^t\rho,\theta+\alpha(t+\log\rho))
=e^tR_{\alpha t}(\rho,\theta+\alpha\log\rho)
=\theta(h)[\psi(\rho,\theta)]
\]
establishes (OM2). 
Thus we may restrict ourselves to the case $\alpha=0$ and for simplicity we write $H$, $X$, $Y$ and so on, without the index $\alpha=0$.

The action \eqref{semidirect} is transitive on $Y$, whose origin is
chosen to be $o=-1/2$ and the corresponding stabilizer is
$H_o=\{I_2\}$, hence compact. Therefore the group is reproducing by Theorem~\ref{th:suffnec}. We shall use Theorem~\ref{th:9} in order to describe the admissible vectors. The
relevant fiber is $\Phi^{-1}(o)=S^1$, the unit circle, with measure $\nu_o=\,d\theta$, upon parametrizing its points by $(\cos\theta,\sin\theta)$.
We shall therefore identify $L^2(X,\nu_o)$ with $L^2(S^1,d\theta)$.
A smooth section $q:Y\to H$ for which $q(y)[o]=y$ is $q(y)=\sqrt{2|y|} I_2$. 
The measure $d\tau_o$ on $Y$ is $dy$. Because of Weil's integral
formula~\eqref{weil},  the  Haar measure of $H_o$  must be $\delta_{I_2}$. Next,
\[
S\eta (y,\theta)= \eta(\sqrt{2|y|}\cos\theta,\sqrt{2|y|}\sin\theta).
\]
Finally, the quasi regular representation of the singleton group $H_o$ is $\pi={\rm id}$ on $L^2(S^1,d\theta)$, hence it decomposes as the direct sum of countably many copy of the identity on $\C$.  Next, we  choose the exponential basis $\{(2\pi)^{-1/2}e^{ik\theta}:k\in\Z\}$ of 
$L^2(S^1,d\theta)$, so that
\[
F^k(y)=\frac{1}{\sqrt{2\pi}}\int_0^{2\pi}\eta(\sqrt{2|y|}\cos\theta,\sqrt{2|y|}\sin\theta)e^{-ik\theta}\,d\theta.
\]
 Theorem~\ref{th:9} tells us that $\eta\in L^2(X,dx)$ is admissible if and only if
\begin{equation*}
\sum_{k\in\Z}\int_{\R_-}|F^k(y)|^2dy<+\infty,
\qquad
\int_{\R_-}F^k(y)\overline{F^\ell}(y)\frac{dy}{|y|}=2\delta_{k\ell}.
\end{equation*}

\paragraph{(2.2), $\alpha\in[0,+\infty)$}\label{2.2}
The elements of $\Sigma_1\rtimes H_\alpha$, with  $H_\alpha=\{h_t=e^tA_{\alpha t}:t\in\R\}$, are 
\[
(u;t):=
\begin{bmatrix}e^{t}A_{\alpha t}&0\\
\left[\begin{smallmatrix}u&\\&-u\end{smallmatrix}\right]e^{t}A_{\alpha t}&e^{-t}A_{-\alpha t}\end{bmatrix}.
\]
A left Haar measure of $H_\alpha$ is the pushforward of the Lebesgue measure $dt$ under $t\mapsto h_t$ and $H_\alpha$ is unimodular. The action \eqref{semidirect} is
$h_t[y]=e^{2t}y$ and so $\beta(t)=e^{2t}$ and $\chi(t)=e^{-2t}$. 
 The intertwining is $\Phi(x)=-(x_1^2-x_2^2)/2$, with Jacobian $J\Phi(x)=\|x\|$. Thus, for every $\alpha$ we put  $X_\alpha=\R^2\setminus\{(x_1,x_2):x_1^2-x_2^2=0\}$ and $Y_\alpha=\R^*$.
The representation  is
\[
U_{(u;t)}f(x_1, x_2)
=e^{-t}e^{\pi iu (x_1^2- x_2^2)}f\bigl(e^{-t}A_{-\alpha t}(x_1,x_2)\bigr).
\]
The groups $\Sigma_1\rtimes H_\alpha$ are mutually orbitally equivalent. Fix $\alpha\not=0$. We define the orbit equivalence $(\theta,\psi,\xi)$ between $(\Sigma_1\rtimes H_0, dt, X_0, Y_0)$  and $(\Sigma_1\rtimes H_\alpha, dt, X_\alpha, Y_\alpha)$ as follows. First, $\theta:H_0\to H_\alpha$  is the group isomorphism $\theta(e^tI_2)=e^{t}A_{\alpha t}$. Now, the four connected components of $X_\alpha$ will be labeled by $(\pm,i)$, with $i=0,1$; the plus sign corresponds to the north-south quadrants and the minus sign to the east-west quadrants; $0$ labels north and east, and $1$ south and west. Each quadrant is then fibered by branches of hyperbolae. Thus, a point in a quadrant  is  parametrized by the branch of the hyperbola, labeled by $\rho>0$, and by the real variable $s$ running along it. Explicitly, write
\begin{equation}
\begin{cases}
x=\rho(\sinh s,(-1)^i\cosh s)&\text{for $x$ in the quadrant }(+,i)\\
x=\rho((-1)^i\cosh s,\sinh s)&\text{for $x$ in the quadrant }(-,i)
\end{cases}
\label{hypcoord}
\end{equation}
and denote by $(\rho,s)$ these hyperbolic coordinates on $X_\alpha$,
see Fig.~\ref{HC}.

\begin{figure}[htpb]
  \begin{center}
  \includegraphics{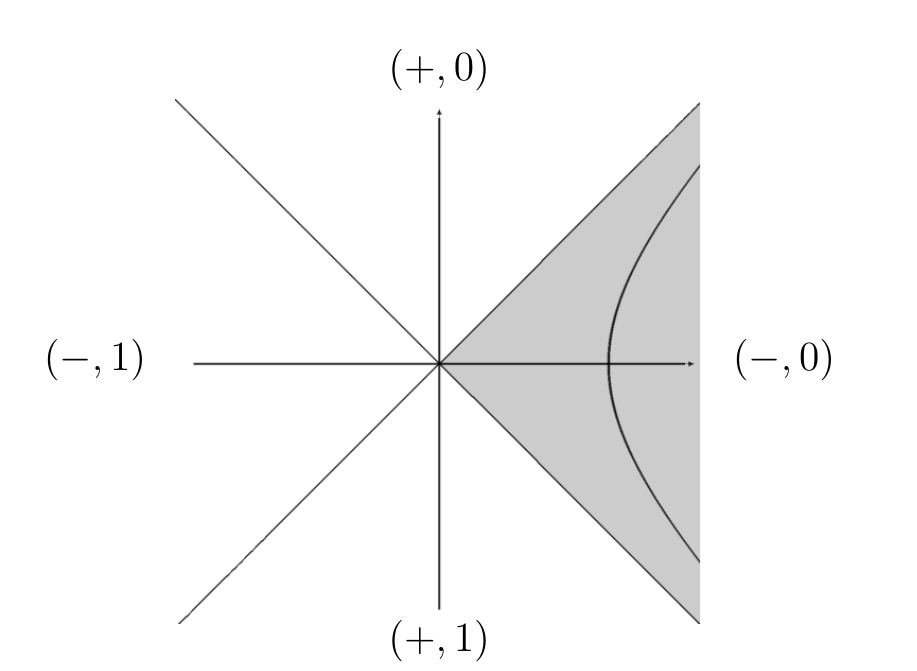} 
  \caption{Hyperbolic coordinates}\label{HC}
\end{center}
\end{figure}

The map $\xi$ is the identity of $\R^*$. The map $\psi:X_0\to X_\alpha$ is defined separately on each quadrant. For example, in the quadrant $(-,0)$ it is given by $\psi(\rho,s)=(\rho,s+\alpha\log\rho)$. Both (OM1) and (OM3) are obvious, and for $h=e^tI_2\in H_0$,
 \[
 \psi(h.(\rho,s))
 =\psi(e^t\rho,s)
 =(e^t\rho,s+\alpha(t+\log\rho))
=e^tA_{\alpha t}(\rho,s+\alpha\log\rho)
=\theta(h)[\psi(\rho,s)]
\]
establishes (OM2); the other three quadrants are treated
similarly.Thus, we may restrict ourselves to the case $\alpha=0$ and
for simplicty we write $H$, $X$, $Y$ and so on. 
The action \eqref{semidirect} on $Y$ has the two orbits 
$\R_-=(-\infty, 0)$  and $\R_+=(0, +\infty)$.
As origins we chose
$\pm1/2$ with stabilizers
$H_{\pm}=\{I_2\}$, hence compact.
Therefore the group is reproducing by Theorem~\ref{th:suffnec}.  We shall use Theorem~\ref{th:9} in order to describe the admissible vectors. The
relevant fibers are $\Phi^{-1}(1/2)$,
 the equilateral hyperbola with vertices in $(0, \pm1)$,
and $\Phi^{-1}(-1/2)$,
the equilateral hyperbola with vertices in $(\pm1, 0)$.
The measures $\nu_{\pm }$ on $X$ are concentrated each on the relative hyperbola and, because of  \eqref{eq:11}  are given, for functions in $C_c(\R^2)$, by
\begin{align}
\int_X\varphi(x_1,x_2)\,d\nu_{+}(x_1,x_2)
&=\sum_{i=0,1}\int_{\R}\varphi(\sinh s,(-1)^i\cosh s)\;\,ds \label{hyp1}    \\
\int_X\varphi(x_1,x_2)\,d\nu_{-}(x_1,x_2)
&=\sum_{i=0,1}\int_{\R}\varphi((-1)^i\cosh s,\sinh s)\;\,ds,\label{hyp2}
\end{align}
with the parametrization \eqref{hypcoord}.
We shall therefore identify each of $L^2(X,\nu_{+})$ and $L^2(X,\nu_{-})$ with  two copies 
of $L^2( \R,\,ds)$. A smooth section $q:Y\to H$ for which $q(y)[\pm 1/2]=y$ is $q(y)=\sqrt{2|y|} I_2$. Weil's integral formula~\eqref{weil} forces the  Haar measure of $H_{\pm}$  to be $\delta$. The measure $d\tau_\pm$ is $dy$ on $\R_\pm$. Next 
\[
S^{\pm}\eta_{\pm} (y,x)= \eta_{\pm}(\sqrt{2|y|}x),
\qquad
 y\in Y, x\in\Phi^{-1}(\pm1/2).
\]
Hereafter we shall write any function on $X$ as a sum of four functions, each supported on one of the connected components of $X$. The quasi regular representation $\pi^{\pm}$ of each singleton group $H_{\pm}$
is the identity on two copies of $L^2( \R,\,ds)$. Choose bases $\{e^{j}_{\pm,i}\}_{j\in\Z}$ of $L^2( \R,\,ds)$, one for each of the four basic branches and put
$F_{\pm,i}^j(y)=\langle S^{\pm}\eta_{\pm}(y,\cdot),e^{j}_{\pm,i}
\rangle$.
Theorem~\ref{th:9} tells us that $\eta\in L^2(X,dx)$ is admissible if and only if 
\begin{align*}
&\sum_{j\in\Z}\int_{\R_\pm}|F_{\pm,i}^j(y)|^2dy<+\infty \qquad i=0,1
&\int_{\R_\pm}F_{\pm,i}^j(y)\overline{F_{\pm,k}^\ell}(y)\frac{dy}{|y|}=2\delta_{j\ell}\delta_{ik}
\qquad i,k=0,1,\;j,\ell\in\Z.
\end{align*}

\paragraph{ (2.4)}
The elements of $G=\Sigma_3\rtimes H$, with $H=\set{h_t:=e^t[
  \begin{smallmatrix}
    1 & 0 \\ t & 0 
  \end{smallmatrix}]\,:\, t\in\R
}$
are 
\[
(u;t):=
\begin{bmatrix}e^{t}&0&0&0\\t e^t&e^t&0&0\\ue^{t}&0&e^{-t}&-t e^{-t}\\0&0&0&e^{-t}\end{bmatrix}.
\]
A left Haar measure of $H$ is the pushforward of the Lebesgue measure
$dt$ under $t\mapsto h_t$ and $H$ is unimodular.
The action~\eqref{semidirect} is $h_t[y]=e^{2t}y$, hence $\beta(t)=e^{2t}$ and $\chi(t)=e^{-2t}$.
 The intertwining is $\Phi(x)=-x_1^2/2$, with Jacobian $J\Phi(x)=|x_1|$. We put
$X=\R^2\setminus\{x_1=0\}$ and $Y=(- \infty, 0)$. The representation is
\[
U_{(u;t)}f(x_1, x_2)=e^{-t}e^{\pi iu x_1^2}f\left(e^{-t}(x_1, x_2- t x_1)\right).
\]
The action \eqref{semidirect} is transitive on $Y$, whose origin is chosen to be $o=-1/2$ with stabilizer
$H_{o}=\{I_2\}$, hence compact.
Therefore the group is reproducing by Theorem~\ref{th:suffnec}. We shall use Theorem~\ref{th:9} in order to describe the admissible vectors. The fiber $\Phi^{-1}(-1/2)$ consists of two vertical lines $\{(\pm1,x_2):x_2\in\R\}$.
The measure $\nu_o$ is the push-forward of one-dimensional Lebesgue measure to the straight line $\{(\pm1,x_2)\}$, and is given by the integral formula
\begin{equation}
\int_X\varphi(x_1,x_2)\,d\nu_{o}(x_1,x_2)
=\int_{\R}\varphi(1,s) ds
+\int_{\R}\varphi(-1,s)ds,
\qquad
\varphi\in C_c(\R^2),
\label{twolines}
\end{equation}
whereby we parametrize the lines of the
fiber by   $(\pm 1, s)$, with $s\in\R$.
A smooth section $q:Y\to H$ for which $q(y)[- 1/2]=y$ is 
\[
q(y)=\sqrt{2|y|}
\begin{bmatrix} 1 & 0\\
\log(\sqrt{2|y|}) & 1 \\
\end{bmatrix}.
\]
The measure $d\tau_o$ on $Y$ is $dy$. Weil's integral
formula~\eqref{weil}  forces the  Haar measure of $H_{o}$  to be the Dirac mass $\delta_{I_2}$. Next,
\[
S_\pm\eta(y, x_2)= \eta\left(\sqrt{2|y|}(\pm1, x_2\pm \log(\sqrt{2 |y|}))\right).
\]
Hereafter we shall write any function on $X$ as a sum of two functions, each supported
on one of the connected components of $X$, labeled by
``$+$'' for the right half plane and by ``$-$'' for the left half plane.
A point in a half plane, in turn, is  parametrized by the vertical line to which it belongs, determined by $y$,
and by the real variable $s$ running along it, as explained above.
Finally, the quasi regular representation of the singleton group $H_{o}$
is $\pi={\rm id}$ on each of the two copies of $L^2(\R, ds)$.
We choose two  bases $\{e_{\pm}^j\}_{j\in\Z}$ of
$L^2(\R , ds)$, one for each of the two basic lines, and put 
$F^j_\pm(y)=\scal{S\eta_\pm(y,\cdot)}{e_{\pm}^j}$.
Theorem~\ref{th:9} tells us that $\eta\in L^2(X,dx)$ is admissible if and only if
\begin{align*}
&\sum_{j\in\Z} \int_{\R_-}|F^j_\pm(y)|^2dy<+\infty
& \int_{\R_-}F_{\eps}^j(y)\overline{F_{\iota}^\ell}(y)\frac{dy}{|y|}=2\delta_{j\ell}\delta_{\eps\iota}
\qquad\eps,\iota\in\{\pm\},\;j,\ell\in\Z.
\end{align*}
\paragraph{ (2.5), $\alpha\in[-1,0)$} The elements of $G_\alpha:=\Sigma_3\rtimes H_\alpha$, with  
$H_\alpha=\{h_t:={\rm diag}(e^{\alpha t},e^{(\alpha+1)t}):t\in\R\}$, are of the form
\begin{equation*}
(u;t):=\begin{bmatrix}e^{\alpha t}&0&0&0\\0&e^{(\alpha +1)t}&0&0\\ue^{\alpha t}&0&e^{-\alpha t}&0\\0&0&0&e^{-(\alpha +1)t}\end{bmatrix}.
\end{equation*}
A left Haar measure  is the pushforward of the Lebesgue measure $dt$ under $t\mapsto h_t$ and $H$ is Abelian, hence unimodular. The action~\eqref{semidirect} is $h_t[y]=e^{2\alpha t}y$ and hence $\beta(t)=e^{(2\alpha+1)t}$ and $\chi(t)=e^{-2\alpha t}$. The intertwining is $\Phi(x)=-x_1^2/2$, with Jacobian $J\Phi(x)=|x_1|$.
Hence we put $X_{\alpha}=\R^2\setminus\{x_1=0\}$ and $Y_{\alpha}=(- \infty, 0)$. The representation is
\[
U_{(u;t)}f(x_1, x_2)=e^{-(2\alpha +1)t/2}e^{\pi iu x_1^2}f\left(e^{-\alpha t}x_1,
e^{-(\alpha +1)t}x_2\right).
\]
The groups are mutually orbitally equivalent.
For  $\alpha\in(-1,0)$, we define the orbit equivalence $(\theta,\psi,\xi)$ between $(G_{-1}, dt, X_{-1}, Y_{-1})$  and $(G_\alpha, dt, X_\alpha, Y_\alpha)$ as follows.
First, $\theta:H_{-1}\to H_\alpha$  is the group isomorphism 
$\theta({\rm diag}(e^{-t},1))={\rm diag}(e^{-t},e^{-(\frac{\alpha+1}{\alpha})t})$.
Next, $\psi:X_{-1}\to X_{\alpha}$ is given by 
$\psi(x_1,x_2)=(x_1,|x_1|^{\frac{\alpha+1}{\alpha}}x_2)$, whereas
$\xi:Y_{-1}\to Y_{\alpha}$ is $\xi(y)=y$. The verification of (OM1), (OM2) and (OM3) is straightforward,
and evidently $J\xi\equiv1$.
Thus, we may restrict ourselves to the case $\alpha=-1$ and we write $H$, $X$, $Y$ and so on.\\
The action \eqref{semidirect} is transitive on $Y$, whose origin is chosen to be $o=-1/2$ with stabilizer
$H_{o}=\{I_2\}$, hence compact. Therefore the group is reproducing by Theorem~\ref{th:suffnec}. 
We shall use Theorem~\ref{th:9} in order to describe the admissible vectors. The
 fiber $\Phi^{-1}(-1/2)$ consists of two vertical lines $\{(\pm1,x_2):x_2\in\R\}$.
The measure $\nu_o$ is again given by the integral formula \eqref{twolines}, 
whereby we parametrize the lines of the
fiber by   $(\pm 1, s)$, with $s\in\R$. A smooth section $q:Y\to H$ for which $q(y)[-1/2]=y$ is 
\[
q(y)=
\begin{bmatrix}\sqrt{2|y|}& 0\\0& 1\\\end{bmatrix}.
\]
Weil's integral
formula~\eqref{weil} forces the  Haar measure of $H_{o}$  to be $ \delta_{I_2}$. Next,
\[
S_\pm\eta (y, x_2)= (2|y|)^{-1/4}
\eta\left(\pm\sqrt{2|y|},x_2\right).
\]
Hereafter we shall write any function on $X$ as a sum of two functions, each supported
on one of the connected components of $X$, labeled by
``$+$'' for the right half plane and by ``$-$'' for the left half plane.
A point in a half plane, in turn, is  parametrized by the vertical line to which it belongs, determined by $y$,
and by the real variable $s$ running along it.
Finally, the quasi regular representation of the singleton group $H_{o}$
is $\pi={\rm id}$ on each of the two copies of $L^2(\R, ds)$.
We choose two  bases $\{e_{\pm}^j\}_{j\in\Z}$ of
$L^2(\R , ds)$, one for each of the two basic lines, and put 
$F^j_\pm(y)=\scal{S\eta_\pm(y,\cdot)}{e_{\pm}^j}$.
Theorem~\ref{th:9} tells us that $\eta\in L^2(X,dx)$ is admissible if and only if
\begin{align*}
&\sum_{j\in\Z}\int_{\R_-}|F_\pm^j(y)|^2dy<+\infty \\
&\int_{\R_-}F_{\eps}^j(y)\overline{F_{\iota}^\ell}(y)\frac{dy}{|y|}=\delta_{j\ell}\delta_{\eps\iota}
\qquad \eps,\iota\in\{\pm\},\;j,\ell\in\Z. 
\end{align*}

\subsection*{Three dimensional groups, $n=1$} 
\paragraph{(3.1)}
The elements of $\Sigma_1\rtimes H$, with $H=\{h_{t,\theta}:=e^tR_\theta:t\in\R, \theta\in[0,2\pi)\}$ are
\begin{equation*}
(u;t,\theta):=\begin{bmatrix}e^tR_{\theta}&0\\ue^tR_{\theta}&e^{-t}R_{\theta}\end{bmatrix}.
\end{equation*}
A left Haar measure of  $H$  is the pushforward under the map
$\R\times[0,2\pi)\to H$ defined by $(t,\theta)\mapsto h_{t,\theta}$ 
of the product Lebesgue measure $dt\,d \theta$, and $H$ is unimodular. 
The action  \eqref{semidirect} is $h_{t, \theta}[y]=e^{2t}y$, so that 
$\beta(t,\theta)=\beta(t)=e^{2t}$ and $\chi(t, \theta)=\chi(t)=e^{-2t}$. The intertwining is 
$\Phi(x)=-\|x\|^2/2$, with Jacobian  $J \Phi(x)=\|x\|$.
Hence we put $X=\mathbb{R}^2\setminus \{x=0\}$ and $Y=(-\infty,0)$.
The representation is
\[
U_{(u;t,\theta)}f(x_1,x_2)=e^{-t}e^{\pi iu(x_1^2+x_2^2)}f\left(e^{-t}R_{-\theta}(x_1,x_2)\right).
\]
The action \eqref{semidirect} is transitive on $Y$, whose origin is chosen to be $o=-1/2$, with stabilizer 
$H_o=\{(0, \theta), \, : \theta \in [0,2\pi)\}$, hence compact.
Therefore the group is reproducing by Theorem~\ref{th:suffnec}. We shall use Theorem~\ref{th:9} 
describe the admissible vectors. The fiber is $\Phi^{-1}(o)=\{x \in\R^2 \, : \|x\|=1\}=S^1$ with measure
$\nu_o=d \theta$ concentrated on the unit circle. This amounts to the  integral formula
\[
\int_X \varphi(x) d\nu_o(x) =\int_{0}^{2\pi} \varphi(\cos \theta, \sin \theta)\,d\theta,
\qquad
\varphi\in C_c(\R^2).
\]
We shall therefore identify $L^2(X,\nu_o)$ with $L^2(S^1,d \theta)$.
A smooth section $q:Y\to H$ for which $q(y)[o]=y$ is  $q(y)=\sqrt{2|y|}I_2$. The measure $d\tau$ is the Lebesgue measure $dy$. Because of Weil's integral
formula~\eqref{weil}, the  Haar measure of $H_o$  is $d\theta$, whence 
${\rm vol}(H_o)=2\pi$. Next,
\[
S\eta(y,(\cos\theta,\sin\theta))=\eta(\sqrt{2|y|}\cos \theta, \sqrt{2|y|}\sin \theta).
\]
The quasi regular representation $\pi$ of the group $H_o\simeq \mathbb T$
on $L^2(X, d \nu_o) \simeq L^2(S^1, d\theta)$
is completely reducible as the direct sum $\pi =\oplus_{\mathbb Z} \pi_k$
on the invariant subspaces ${\rm sp}\{e_k\} $,
where $\{e_k\}_{k\in\mathbb Z}$ is the normalized exponential
basis of $L^2(S^1, d\theta)$, namely $e_k(\theta)=e^{ik\theta}/\sqrt{2\pi}$.
Let $F^k(y)=\langle S\eta(y,\cdot),e_k\rangle$.
Theorem~\ref{th:9} tells us that $\eta\in L^2(X,dx)$ is admissible if and only if
\[
\int_{\R_-}\sum_{k\in\Z}|F^k(y)|^2\,dy<+\infty,
\qquad
\int_{\R_-}|F^k(y)|^2\frac{dy}{|y|}=\frac{1}{\pi}
\]
We can reformulate these by the natural change of variable
$r=\sqrt{2|y|}$.  Set
$\eta_k(r)=F^k(y)$, then 

\begin{align}
&\int_{\R_+}\sum_{k\in\Z}|\eta_k(r)|^2\,r\,dr<+\infty \qquad \label{3.1a}\\
&\int_{\R_+}|\eta_k(r)|^2\frac{dr}{r}=\frac{1}{2\pi}.\label{3.1b}
\end{align}
Notice that, under the chosen normalization,
\[
\eta_k(r)=\frac{1}{\sqrt{2\pi}}\int_0^{2\pi}\eta(r\cos\theta,r\sin\theta)\,e^{-ik\theta}\,d\theta,
\]
the ordinary Fourier coefficient of the restriction of $\eta$ to the circle of radius $r$.


\paragraph{(3.2)}
The elements of $\Sigma_2\rtimes H$, with $H=\{h_{t,s}:t,s\in\R\}$, are
\begin{equation*}
(u;t,s):=
\begin{bmatrix}e^t A_s&0\\
\left[\begin{smallmatrix}u&\\&-u\end{smallmatrix}\right]e^tA_s&e^{-t}A_{-s}
\end{bmatrix}.
\end{equation*}
A Haar measure of $H$ is the pushforward of the map $\R^2\to H$ under the map $(t,s)\mapsto h_{t,s}$ of the product Lebesgue measure $dt\,ds$. 
The action \eqref{semidirect} is $h_{t,s}[y]=e^{2t}y$,  $\beta(t,s)=\beta(t)=e^{2t}$ and 
$\chi(t,s)=\chi(t)=e^{-2t}$. The intertwining is $\Phi(x)=(x^2_2 - x^2_1)/2$, with Jacobian $J \Phi(x)=\|x\|$.
We put $X=\mathbb{R}^2\setminus \{(x_1, x_2) \in \R : \, x_1=\pm x_2\}$
and $Y=\mathbb R^*$. The representation is
\[
U_{(u;t,s)}f(x_1, x_2)=e^{-t}e^{\pi iu(x_1^2 -x_2^2)}f\left(e^{-t}A_{-s}(x_1, x_2)\right).
\]
The action \eqref{semidirect} on $Y$ has the two orbits  
$\R_-=(-\infty, 0)$  and $\R_+=(0, +\infty)$, whose origins are chosen to be 
$\pm1/2$ with stabilizers
 $H_{\pm}=\{(0, s), \, :s  \in \R \}$, hence not compact.
Therefore we shall use Theorem~\ref{adm-vec1} in order to show that $G$ is reproducing,
and then we describe the admissible vectors. The relevant fibers are: $\Phi^{-1}(1/2)$,
 the equilateral hyperbola with vertices in $(0, \pm1)$,
and $\Phi^{-1}(-1/2)$,
the equilateral hyperbola with vertices in $(\pm1, 0)$.
The measures $\nu_{\pm}$ on $X$ are concentrated each on the relative hyperbola and are given in integral form by \eqref{hyp1} and \eqref{hyp2}. As in Section~\ref{2.2}, 
we will identify each of $L^2(X,\nu_+)$ and $L^2(X,\nu_-)$ with  two copies
of $L^2( \R,ds)$.
A smooth section $q_{\pm}:\R_{\pm}\to H$ for which $q_{\pm}(y)[\pm1/2]=y$
is $q_{\pm}(y)=\sqrt{2|y|} I_2$. The measure $d\tau$ is the Lebesgue measure $dy$.
Weil's formula~\eqref{weil} forces the  Haar measure of $H_{\pm}$  to be $ds$.
Next,
\[
S^{\pm}\eta_{\pm} (y,x)= \eta_{\pm}(\sqrt{2|y|}x).
\]
Hereafter we shall write any function on $X$ as a sum of four functions, as in Section~\ref{2.2}. We will also adopt the following notation:
\[
\eta_{\pm, i}(y)(s) 
=  \begin{cases}
\eta_{+}\left( q_+(y)(\sinh s, (-1)^i\cosh s)\right) =
\eta_{+}\left( \sqrt{2|y|}(\sinh s, (-1)^i\cosh s)\right)& \\
\\
\eta_{-}\left( q_-(y)( (-1)^i\cosh s, \sinh s)\right)=
\eta_{-}\left(\sqrt{2|y|}((-1)^i\cosh s, \sinh s)\right).
\end{cases}
\]
Similarly, we shall write $u_{\pm,i}(s)$ to mean the analogous restrictions of $u\in L^2(X,\nu_\pm)$ to the various hyperbolae.
We may now apply Theorem~\ref{adm-vec1}.  Equation \eqref{SECOND} reads 
\begin{align}
\|u\|_{\pm}^2&=
\int_{\R_\pm} \int_{\R}
\Big| \int_{\R} \sum_{i=0,1}u_{\pm,i}(s) \overline{\eta_{\pm,i}(y)(s-(-1)^ip)}ds
\Big|^2 dp\frac{dy }{2|y|}\nonumber\\
&=\int_{\R_\pm} \int_{\R}
\Big| \sum_{i=0,1}\langle u_{\pm,i},\tau_{(-1)^ip}{\eta_{\pm,i}(y)}\rangle
\Big|^2 dp\;\frac{dy }{2|y|},
\label{adm32}
\end{align}
where $\tau_p$ denotes the standard translation on functions.
Choose first a function $u$ with support in the northern quadrant of $X$, so that $u=u_{+,0}$ and there is only one summand.
By setting $G_y(p)=\overline{\eta_{+,0}(y)}(-p)$, we get
\begin{align*}
\|u\|_+&=\int_{\R_+} \Bigl\{\int_{\R}
\Big|(u_{+,0}*G_y)(p)\Bigr|^2 dp\Bigr\}\,\frac{dy }{2y}     \\
&=\int_{\R_+}\|u_{+,0}*G_y\|_2^2\,\frac{dy }{2y}      \\
&=\int_{\R_+}\|\widehat{u_{+,0}}\;\widehat{G_y}\|_2^2\,\frac{dy }{2y}      \\
&=\int_{\R_+}\int_\R|\widehat{u_{+,0}}(\xi)|^2\;|\widehat{G_y}(\xi)|^2\,d\xi\,\frac{dy }{2y} \\
&=\int_{\R}|\widehat{u_{+,0}}(\xi)|^2\Bigl(\int_{\R_+}\;|\widehat{G_y}(\xi)|^2\,\frac{dy }{2y}\Bigr)  \,d\xi. 
\end{align*}
This is readily equivalent to requiring the inner integral to be equal to $1$ for almost every $\xi$. A similar argument for the other quadrants yields
\begin{equation}
\int_{\R_\pm}\frac{|\widehat{\eta_{\pm,i}(y)}(\xi)|^2}{|y|}\,dy=2,
\qquad \text{a.e. }\xi\in\R,\;i=0,1.
\label{3.2a}
\end{equation}
Next we use these in \eqref{adm32} and we obtain that the mixed terms must vanish, namely, using
Parseval's equality in the form
$\langle f,\tau_p g\rangle={\mathcal F}^{-1}(\hat f\,\overline{\hat g})(p)$
\begin{align*}
0&=\int_{\R_{\pm}}\int_{\R}
\langle u_{\pm,0},\tau_p{\eta_{\pm,0}(y)}\rangle
\overline{\langle u_{\pm,1},\tau_{-p}{\eta_{\pm,1}(y)}\rangle}\,dp\,\frac{dy }{2|y|}       \\
&=\int_{\R_{\pm}}\int_{\R} 
{\mathcal F}^{-1}\left(\widehat{u_{\pm,0}}\;\overline{\widehat{\eta_{\pm,0}(y)}}\right)(p)
\;
\overline{{\mathcal F}^{-1}\left(\widehat{u_{\pm,1}}\;\overline{\widehat{\eta_{\pm,1}(y)}}\right)(-p)}
\,dp\,\frac{dy }{2|y|}    \\
&=\int_{\R_{\pm}}\int_{\R} 
\widehat{u_{\pm,0}}(\xi)\;\overline{\widehat{\eta_{\pm,0}(y)}(\xi)}
\;
\overline{\widehat{u_{\pm,1}}(-\xi)}\;\widehat{\eta_{\pm,1}(y)}(-\xi)\,d\xi\,\frac{dy }{2|y|}    \\
&=\int_{\R}
\widehat{u_{\pm,0}}(\xi)\overline{\widehat{u_{\pm,1}}(-\xi)}
\Bigl( \int_{\R_{\pm}}
\widehat{\eta_{\pm,1}(y)}(-\xi)\;\overline{\widehat{\eta_{\pm,0}(y)}(\xi)}\,\frac{dy }{2|y|}\Bigr)
\,d\xi,   \\
\end{align*}
which yields the second set of admissibility conditions
\begin{equation}
\int_{\R_{\pm}}\frac{\widehat{\eta_{\pm,1}(y)}(-\xi)\;\overline{\widehat{\eta_{\pm,0}(y)}(\xi)}} {|y|}\,dy=0,
\qquad \text{a.e. }\xi\in\R,\;i=0,1.
\label{3.2b}
\end{equation}
It is not difficult to exhibit vectors $\eta$ that do satisfy \eqref{3.2a} and \eqref{3.2b}, thereby showing that the group is indeed reproducing.

\paragraph{(3.3)}  The elements of the group $\Sigma_3\rtimes H$, with $H=\{h_{a,c}:={\rm diag}(a^{-1/2},c):a,c>0\}$ are
\begin{equation*}
(u;a,c):=
\begin{bmatrix}a^{-1/2}&0&0&0\\0&c&0&0\\ua^{-1/2}&0&a^{1/2}&0\\0&0&0&c^{-1}\end{bmatrix}.
\end{equation*}
A left Haar measure of $H$ is $dh=(ac)^{-1}da\,dc$ and $H$ is unimodular.
The action \eqref{semidirect} is given by $h_{a,c}[y]=a^{-1}y$ and hence $\beta(a,c)=a^{-1/2}c$ and 
$\chi(a,c)=\chi(a)=a$.
The intertwining  is $\Phi(x_1,x_2)=-x_1^2/2$, with Jacobian  $J\Phi(x_1,x_2)=\abs{x_1}$. 
We put $X=\R^2\setminus\{x_1=0\}$ and $Y=(-\infty,0)$.
The representation is
\[
U_{(u;a,c)}f(x_1,x_2)=a^{1/4}c^{-1/2}e^{\pi iux_1^2}f\left(x_1a^{1/2},c^{-1}x_2\right).
\]
The action \eqref{semidirect} is transitive on $Y$, whose origin is
chosen to be $o=-1/2$ with stabilizer given by 
$H_o=\{(1,c):c>0\}$, hence not compact. Therefore we shall use Theorem~\ref{adm-vec1} in order to show that $G$ is reproducing, and then we describe the admissible vectors. Now, the
relevant fiber is $\Phi^{-1}(-1/2)=\{(\pm1,x_2):x_2\in\R\}$. The measure $\nu_o$ is decribed by \eqref{twolines}.
A smooth section $q:Y\to H$ for which $q(y)[-1/2]=y$ is 
\[
q(y)=\begin{bmatrix}(2|y|)^{-1}&0\\0&1\end{bmatrix}.
\] 
Because of Weil's integral 
formula~\eqref{weil}, this forces the  Haar measure of $H_o$  to be $2dc/c$. Next,
\[
S\eta_\pm(y,x_2)=(2|y|)^{-1/4}\eta(\pm\sqrt{2|y|},x_2).
\]
We may now apply Theorem~\ref{adm-vec1}. The right-hand side of \eqref{SECOND} is
\begin{align}
&\int_{\R_-}\int_{\R_+}
\left|\sum_{\pm}\int_\R u_\pm(x_2)(2|y|)^{-1/4}c^{-1/2}\bar\eta\bigl(\pm\sqrt{2|y|},c^{-1}x_2\bigr)\,dx_2\right|^2
\,\frac{2dc}{c}\frac{dy}{2|y|}\nonumber   \\
&\stackrel{x_1=\sqrt{2|y|}}{=}
\int_{\R_+}\int_{\R_+}
\left|\sum_{\pm}\int_\R u_\pm(x_2)c^{-1/2}\bar\eta\bigl(\pm x_1,c^{-1}x_2\bigr)\,dx_2\right|^2
\,\frac{dx_1}{x_1^2}\frac{dc}{c}  \nonumber \\
&=
\int_{\R_+}\int_{\R_+}
\left|\sum_{\pm}\langle u_\pm,\pi_{c}\eta(\pm x_1,\cdot)\rangle\right|^2
\,\frac{dx_1}{x_1^2}\frac{dc}{c},  \label{driftW}
\end{align}
where $\pi_c$ is the non-irreducible representation on $L^2(\R,dt)$ of $\R_+$
\[
\pi_cf(t)=c^{-1/2}f\bigl(c^{-1}t\bigr).
\]
In order to handle \eqref{driftW}, we diagonalize $\pi_c$ with the unitary  Mellin transform. Recall that the unitary Mellin transform is the  operator ${\mathcal M}:L^2(\R_+, dt)\to L^2(\R, d\xi)$ that is obtained by extending
\[
{\mathcal M}f(\xi)=\int_{\R_+}f(t)t^{-2\pi i\xi}\frac{dt}{\sqrt t}
\]
from $L^1(\R_+)\cap L^2(\R_+)$ to $L^2(\R_+)$. Its inverse is defined on $L^1(\R)\cap L^2(\R)$ by
\[
{\mathcal M}^{-1}f(t)=\frac{1}{\sqrt t}\int_{\R}f(\xi)t^{2\pi i\xi}d\xi
\]
and then extended to the whole of  $L^2(\R)$. It is immediate to see that
\[
{\mathcal M}(\pi_cf)(\xi)=c^{-2\pi i \xi}{\mathcal M}f(\xi)
\]
and hence
\begin{align*}
\scal{g}{\pi_c f}&=\scal{{\mathcal M}g}{{\mathcal M}(\pi_c f)}\\
&=\scal{{\mathcal M}g}{c^{-2\pi i \cdot}{\mathcal M} f}\\
&=\sqrt{c}\;\frac{1}{\sqrt c}\int_\R{\mathcal M}g(\xi)c^{2\pi i \xi}\overline{{\mathcal M} f}(\xi)\,d\xi\\
&=\sqrt{c}\,{\mathcal M}^{-1}\left({\mathcal M}g\;\overline{{\mathcal M}f}\,\right)(c).
\end{align*}
Below we use the   following notation: 
$\eta_i(x_1,\cdot)$
is the function on $\R_+$ defined by 
\[t\mapsto\eta_i(x_1,t)=\eta(x_1,(-1)^it)\]
and  $\eta^\sharp_i(x_1,\xi)$ is its Mellin transform evaluated at $\xi$. Take now $u$ supported in the first quadrant $\{(x_1,x_2)\in X:x_1>0, x_2>0\}$ and  denote 
by $u_{I}$ its restriction to the half line $\{(1,t):t>0\}$, a function in $L^2(\R_+,dt)$. Using the above version of Parseval's identity for the Mellin transform, the admissibility conditions given by \eqref{driftW} yield 
\[
\|u\|_{\nu_o}^2=\|u_{I}\|^2_{L^2(\R_+,dt)}
=\int_{\R_+}\Bigl(\int_\R|{\mathcal M}u_{I}(\xi)|^2| \eta_0^\sharp(x_1,\xi)|^2\,d\xi\Bigr)
\,\frac{dx_1}{x_1^2}.
\]
 By the fact that ${\mathcal M}$ is unitary and by Fubini's theorem,  this gives
\[
\int_{\R_+}|\eta_0^\sharp(x_1,\xi)|^2\,\frac{dx_1}{x_1^2}=1,
\qquad\text{a.e. } \xi\in\R.
\]
Similarly one argues for the other three quadrants.  Finally, for an arbitrary function, one uses the above (and the analogous relative to the left-half plane), and deduces that  the mixed terms arising from the square modulus appearing in \eqref{driftW} must vanish. Altogether, the admissibility conditions are therefore 
\begin{equation}
\int_{\R_+}\eta_i^\sharp((-1)^kx_1,\xi)\overline{\eta_j^\sharp((-1)^{\ell}x_1,\xi)}\,\frac{dx_1}{x_1^2}=
\delta_{ij}\delta_{k\ell},
\label{mellin}
\end{equation}
which must  hold for almost every $\xi\in\R$ and all $i,j,k,\ell\in\{0,1\}$. 
Finally, we show how to meet \eqref{mellin}. Take four orthonormal functions $\varphi_\ell\in L^2(\R_+,dx_1)$ satisfying
\[
\int x_1^2|\varphi_\ell(x_1)|^2\,dx_1<+\infty
\]
and one positive function $a\in L^2(\R,d\xi)$, and  define, for $\xi\in\R$
\begin{align*}
\eta_0^\sharp(x_1,\xi)&=\begin{cases} x_1 a(\xi)^{-1/2}\Phi(x_1 a(\xi)^{-1}) & x_1>0\\
- x_1 a(\xi)^{-1/2}\varphi_2(-x_1a(\xi)^{-1}) & x_1<0
\end{cases}\\
\eta_1^\sharp(x_1,\xi)&=\begin{cases} x_1 a(\xi)^{-1/2}\varphi_3(x_1 a(\xi)^{-1}) & x_1>0\\
- x_1 a(\xi)^{-1/2}\varphi_4(-x_1a(\xi)^{-1}) & x_1<0.
\end{cases}\\
\end{align*}
Finally, define
\[
\eta(x_1,x_2)=\begin{cases}(\eta_0^\sharp)^\flat(x_1,x_2 )&x_2>0\\ (\eta_1^\sharp)^\flat(x_1,-x_2)&x_2<0\end{cases}
\]
where we have denoted by $(\eta_i^\sharp)^\flat(x_1,x_2)$ the  inverse Mellin transform evaluated at $x_2$ of the function defined on $\R$ by  $\xi\mapsto\eta_i^\sharp(x_1,\xi)$, for $i=0,1$. We show first that $\eta\in L^2(X,dx)$.
Using the fact that ${\mathcal M}$ is unitary, this is equivalent to proving that
\[
\int_{\R}\int_{\R_+}x_1^2a(\xi)^{-1}|\varphi_\ell(x_1 a(\xi)^{-1})|^2\,dx_1\,d\xi<+\infty,
\]
which follows by the change of variables $\tilde x_1=x_1 a(\xi)^{-1}$ and the hypotheses on $\varphi_\ell$ and on $a$. The admissibility conditions \eqref{mellin} follow from the orthonormality  of the $\varphi_\ell$.

\subsection{Three dimensional groups, $n=2$} The dimension of  $\Sigma$ in the group (3.4) and in the family of groups (3.5) is one, but they are conjugate to semidirect products in ${\mathcal E}_2$ for which the normal factor has dimension $2$. 

\paragraph{The groups(3.4) and (3.5)} Consider the symplectic permutation
\begin{equation*}
w=\begin{bmatrix}0&0&0&-1\\-1&0&0&0\\0&1&0&0\\0&0&-1&0\end{bmatrix}.
\end{equation*}
It is easy to show that it conjugates the group  (3.4), that is $G=\Sigma_3\rtimes \bigl\{\left[\begin{smallmatrix}
e^t & 0 \\
s & e^t \\
\end{smallmatrix}\right]
:s,t\in\R\bigr\}$,
into 
\begin{equation}
wGw^{-1}=
\Sigma_3^\perp\rtimes \bigl\{\left[\begin{smallmatrix}e^t & 0 \\0 & e^{-t} \\\end{smallmatrix}\right]:t\in\R\bigr\}
\label{34}
\end{equation}
and the groups (3.5), that is  $G_\alpha=\Sigma_3\rtimes \bigl\{\left[\begin{smallmatrix}
                                         e^{\alpha t} & 0 \\
                                         s & e^{(\alpha+1)t} \\
                                       \end{smallmatrix}\right]
:s,t\in\R\bigr\}$,
into 
\begin{equation}
wG_\alpha w^{-1}=
\Sigma_3^\perp\rtimes \bigl\{\left[\begin{smallmatrix}e^{-(\alpha+1)t} & 0 \\0 & e^{\alpha t} \\\end{smallmatrix}\right]:t\in\R\bigr\}.
\label{35alpha}
\end{equation}
Recall that the value $\alpha=1/2$ must be neglected because $G_{1/2}$ is unimodular. Next we relabel the groups \eqref{34} and \eqref{35alpha} in a single family. Upon defining
\[
\cos\zeta= -\frac{\alpha+1}{\sqrt{(\alpha+1)^2+\alpha^2}},\qquad
\sin\zeta=\frac\alpha{\sqrt{(\alpha+1)^2+\alpha^2}}
\]
we write
\begin{equation*}
H_\zeta:= \bigl\{h_\zeta(t):=\begin{bmatrix}e^{t\cos\zeta}&\\&e^{t\sin\zeta}\end{bmatrix}:\, t\in \R\bigr\},
\qquad
\zeta\in[-\frac{\pi}{2},\frac{\pi}{2})\setminus\{\zeta_0\},
\end{equation*}
where $\tan\zeta_0=-1/3$ corresponds  to $\alpha=1/2$. Notice that $H_{-\pi/4}$ is conjugate to (3.4).
We show below that all the groups $G_\zeta:=\Sigma_3^\perp\rtimes H_\zeta$ are mutually orbitally equivalent, by exhibiting orbit equivalences of each $G_\zeta$ with $G_0$. 

A left Haar measure on $H_\zeta$ is the pushforward of the Lebesgue measure $dt$ under $t\mapsto h_\zeta(t)$ and $H_\zeta$ is unimodular.
The actions \eqref{semidirect} is 
\[
h_\zeta(t)[y_1,y_2]=
\begin{bmatrix}e^{2t\sin\zeta}&\\&e^{t(\sin\zeta+\cos\zeta)}\end{bmatrix}\begin{bmatrix}y_1\\y_2\end{bmatrix}.
\]
so that $\beta(t)=e^{t(\cos\zeta+\sin\zeta)}$ and $\chi(h_\zeta(t))=e^{-t(3\sin\zeta+\cos\zeta)}$.
The map $\Phi(x)=(-x_2^2/2,-x_1x_2)$ intertwines, and has Jacobian $J\Phi(x)=x_2^2$. Due to the orbit equivalence that we are about to define, we slightly strengthen the natural choice of $X$ and put  $X_\zeta=\R^2\setminus\{x_1x_2=0\}$, and $Y_\zeta=\R_-\times\R\setminus\{0\}$. The representation is
 \[
 U_{(u,v;t)}f(x_1,x_2)=e^{-t(\cos\zeta+\sin\zeta)/2}e^{\pi i(ux_2^2+2vx_1x_2)/2}
 f(h_\zeta(-t)(x_1,x_2)).
 \]
The orbits under the action \eqref{semidirect} all lie  in the left half plane and are: 
open rays issuing from the origin for $\zeta\not\in\{0,-\pi/4\}$; 
open vertical rays issuing from the $y_1$-axis 
and open horizontal rays issuing from the $y_2$-axis in the negative
direction if $\zeta=-\pi/4$, see Fig.~\ref{rays}.

\begin{figure}[htpb]
  \begin{center}
   \subfigure{\includegraphics[width=0.3\linewidth]{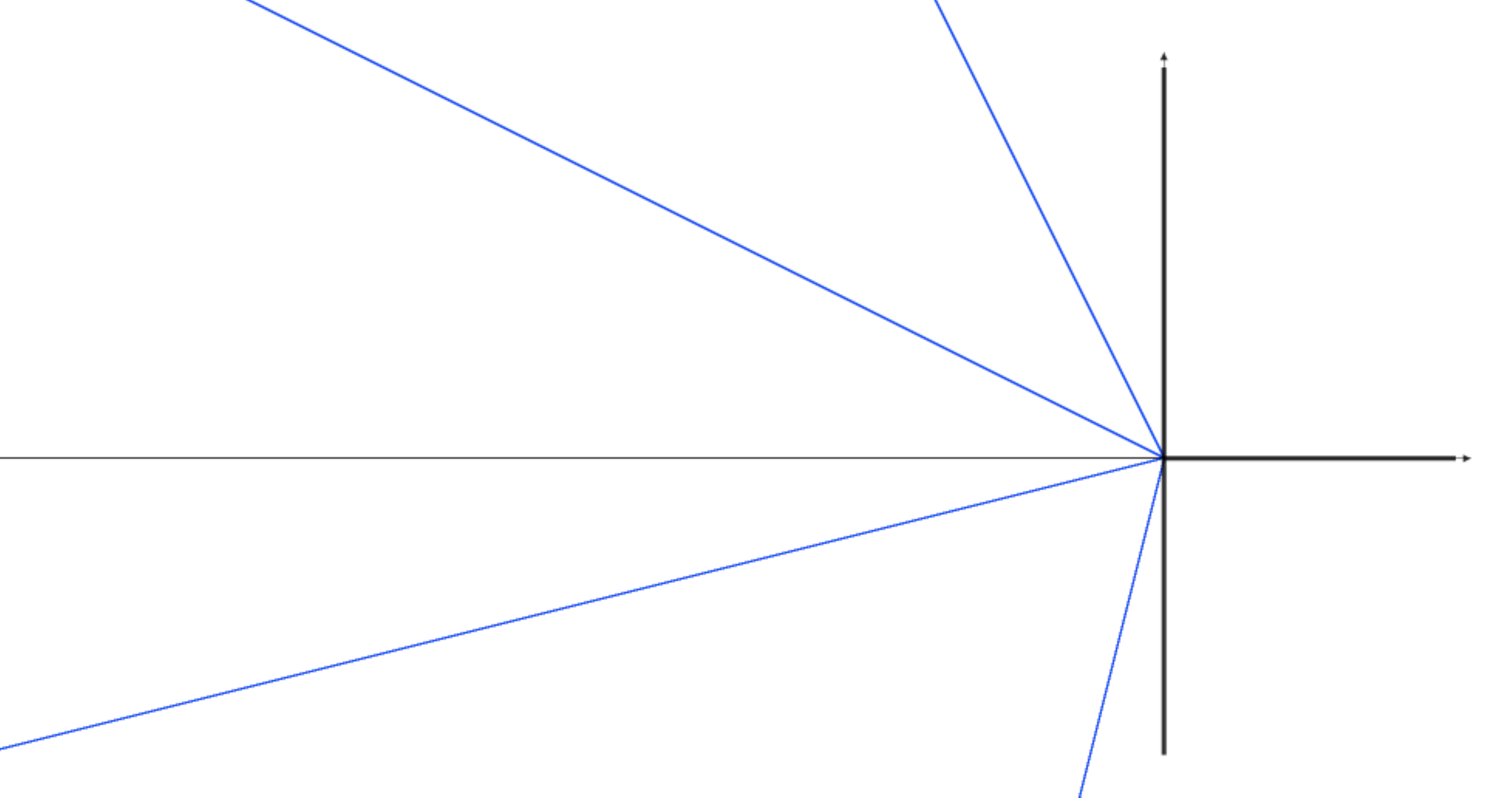} }\quad
    \subfigure{\includegraphics[width=0.3\linewidth]{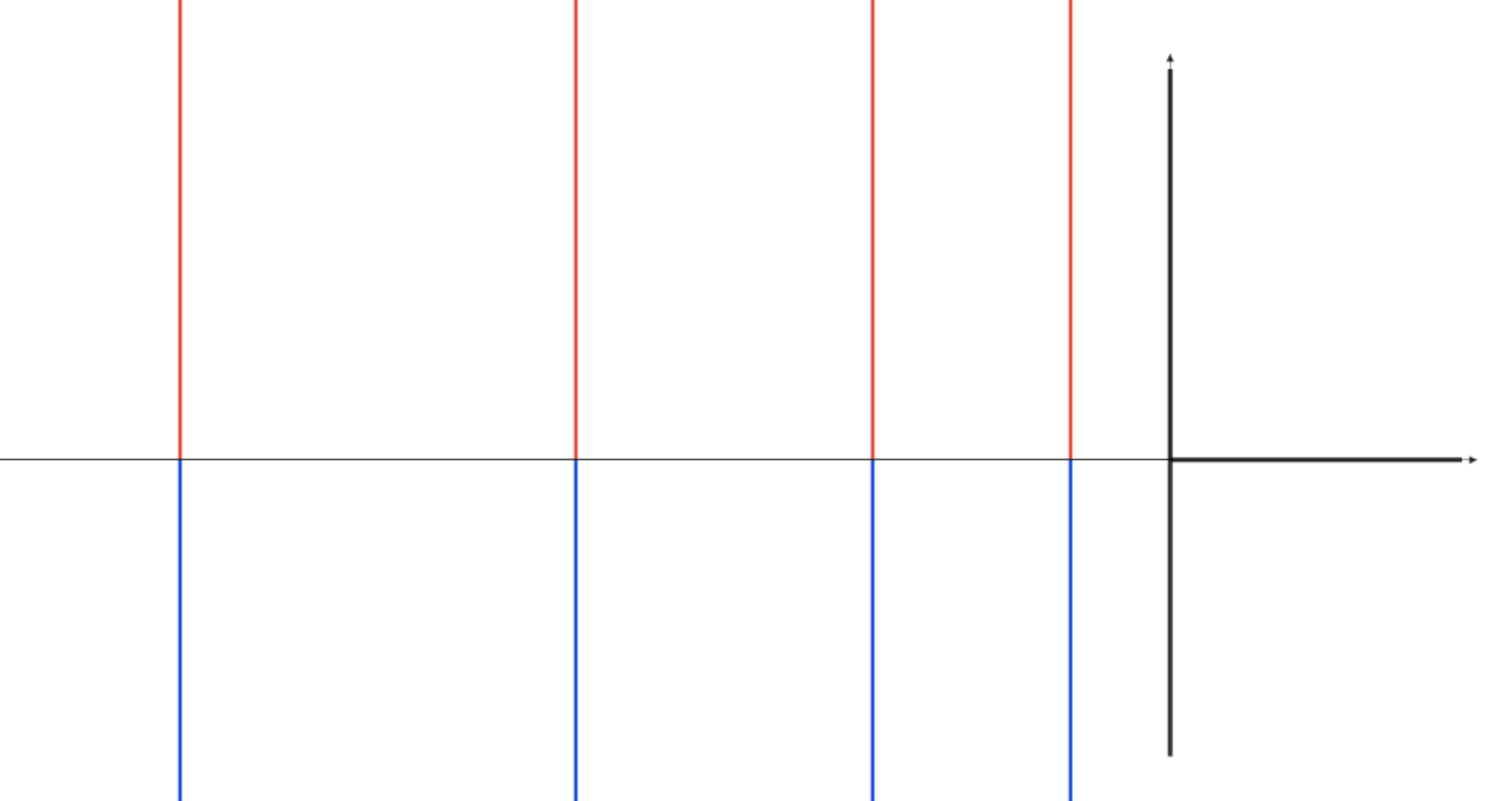}}\quad
    \subfigure{\includegraphics[width=0.3\linewidth]{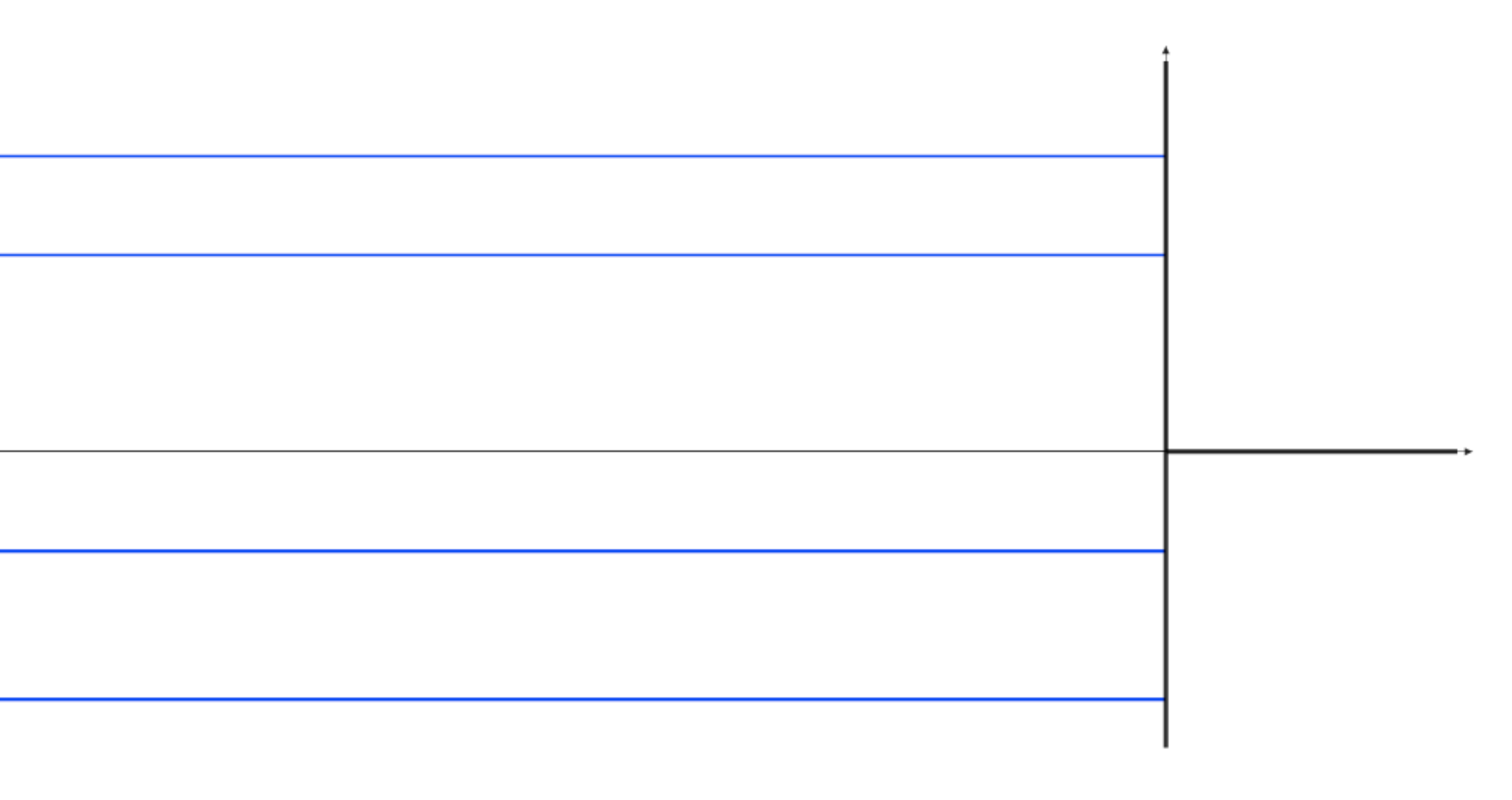} }
    \caption{Left to right: orbits for $\zeta\not\in\{0,-\pi/4\}$, $\zeta=0$ and then
      $\zeta=-\pi/4$.}\label{rays}
  \end{center}

 \end{figure}
 
 \noindent
 Next,  for $\zeta\not=\zeta_0$, we put
\[
c_\zeta=\frac{1}{3\sin\zeta+\cos\zeta}
\]
and define the orbit map from $(G_0, dt, X_0, Y_0)$ to $(G_\zeta, dt, X_\zeta, Y_\zeta)$, with $\zeta\ne0$, as the triple
\begin{align*}
\theta\bigl(\begin{bmatrix}e^t&\\&1\end{bmatrix}\bigr)
&=\begin{bmatrix}e^{ tc_\zeta\cos\zeta}&\\&e^{tc_\zeta\sin\zeta}\end{bmatrix}\\
\psi(x_1,x_2)&=\Bigl( \frac{\sqrt2}{4}\,{\rm sign}(x_1)\,x^2_2\left|x_1x_2\right|^{c_\zeta\cos\zeta},
\sqrt2\,{\rm sign}(x_2)\left|x_1x_2\right|^{c_\zeta\sin\zeta} \Bigr)\\
\xi(y_1,y_2)&=\Bigl(-|y_2|^{2c_\zeta\sin\zeta},
-y_1\,{\rm sign}(y_2)|y_2|^{c_\zeta(\sin\zeta+\cos\zeta)}\Bigr).
\end{align*}
Properties (OM1),(OM2) and (OM3) are easily verified. Notice in particular that the mappings are smooth in the open quadrants in which they are defined. It is tedious but trivial to check that the Jacobian of $\xi$ is equal to $2|c_\zeta\sin\zeta|$, hence constant and non vanishing because $\zeta\ne0$. Thus the triple is an orbit equivalence and we may restrict ourselves to the case $G=G_0$ and for simplicity we write $H$, $X$, $Y$ and so on. For any fixed point $(y_1, y_2)\, \in Y $, the orbit is
\[H[(y_1, y_2)]= \{(y_1, e^{t}y_2)\, : t \in \R\}=\{y_1\}\times \R_\pm\, ,\]
where we take either $\R_+$ or $\R_-$ according to the sign of $y_2$.
As origin of each orbit we chose the intersection of the orbit with the half line $y_2=\pm y_1$, that is
$o(z_1,z_2)=(z_1, {\rm sign}(z_2)|z_1|)$, see Fig.~\ref{orbit}.

            \begin{figure}[H]
 \centering 
             \includegraphics[width=0.40\linewidth]{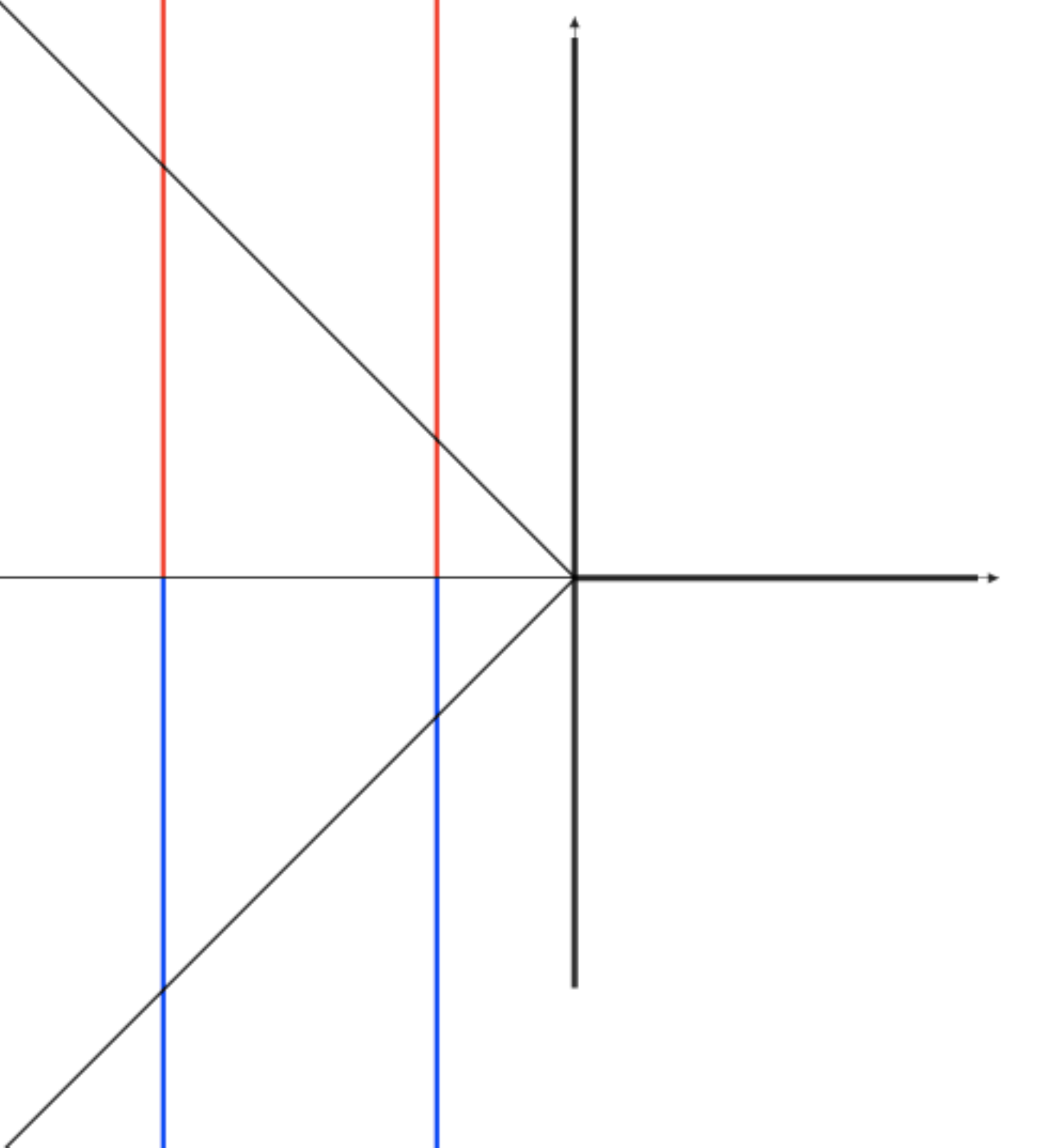}
            \caption{Orbit representatives \label{orbit}}
            \end{figure}

We label the elements of $Z$ with $s\in\R^*$, identifying $s$ with the point $o(s)=(-|s|, s)$ in the plane, namely the origin of the orbit. 
Each stabilizer is the identity of $H$, hence  compact, and the orbits are locally closed.
Therefore the group is reproducing by  Theorem \ref{th:suffnec}.
We shall use Theorem \ref{th:9}  in order to describe the admissible vectors. 

Fix $s\in Z$. An easy computation shows that $\Phi^{-1}(o(s))=\pm(-{\rm sign}(s) \sqrt{|2s|},\sqrt{|2s|})$. The points of the orbit $H[o(s)]$ have the form $y^s_a:=(-|s|, {\rm s}a)$, with $a>0$.
A smooth section $q_s:H[o(s)]\to H$  for which $q_s(y^s_a)[(-|s|, s)]=y^s_a$ is 
\[
q_s(y^s_a)=\begin{bmatrix}\frac a{|s|}&\\&1\end{bmatrix}.
\]
It follows that
\[
S^sf_s(y^s_a,x)=f_{s, a}\left(\frac{|s|x_1}{a}, x_2\right).
\]
The representation $\pi^s$ of the singleton group is the identity on  $L^2(\Phi^{-1}(o(s)))\simeq\C^2$. We fix a basis $\{e_1,e_2\}$ of $\C^2$. If $\eta\in L^2(\R^2)$ and we write $F^j_s:=\scal{S^s\eta_s}{e_j}$, with $j=1,2$, then $\eta$ is admissible if and only if
\[
\int_\R F^j_s(-|s|, {\rm sign}(s) a)\overline{F^\ell_s(-|s|, {\rm sign}(s) a)}\frac{da}a=\delta_{j,\ell},
\qquad j,\ell=1,2.
\]
\paragraph{ (3.6), $\alpha\in[0,+\infty)$} The elements of
$G_\alpha:=\Sigma_1^\perp\rtimes H_\alpha$, with
$H_\alpha=\{h_t:=e^tR_{\alpha t} :t\in\R\}$, are
\begin{equation*}
(u,v;t):=
\begin{bmatrix} e^tR_{\alpha t}&  0 \\
  [\begin{smallmatrix}
    u & v \\ v & -u
  \end{smallmatrix}]e^tR_{\alpha t}
&  e^{-t}R_{\alpha t}\end{bmatrix}.
\end{equation*}
 The Haar measure on $H_\alpha$ is the pushforward
of the Lebesgue measure $dt$ under $t\mapsto h_t$.
The contragradient action~\ref{semidirect} of $H_\alpha$ on $\R$ is $h_t[y]=e^{2t}R_{2\alpha t}\,y$ so that
 $\beta(t)=e^{2t}$ and $\chi(t)=e^{-4t}$. The interwining is
 $\Phi(x)=((x_2^2-x_1^2)/2,-x_1x_2)$, with Jacobian $J\Phi(x)=\|x\|^2$. Hence we set 
 $X_\alpha=\R^2\setminus \{(0,0)\}$ and $Y_\alpha=\R^2\setminus\{(0,0)\}$. The representation is
\[
U_{(u,v;t)}f(x_1,x_2)=e^{-t}e^{\pi  i(u(x_1^2-x_2^2)+ 2 vx_1x_2)}f(e^{-t}R_{-\alpha t}(x_1,x_2)).
\]
We show that these groups are mutually orbitally equivalent. Indeed, fix $\alpha\neq0$ and define
the map $\theta:H_0\to H_\alpha$ by $\theta(e^tI_2)=e^{t}R_{\alpha t}$. We parametrize the elements
in  $X_0$ and $X_\alpha$ with the polar
 coordinates $(r,\theta)$ and define $\psi:X_0\to X_\alpha$ by 
 $\psi(r,\theta)=(r,\theta+\alpha\log (r/\sqrt{2}))$.
 Similarly, we parametrize the elements
in  $Y_0$ and $Y_\alpha$ with the  polar
 coordinates $(\rho,\omega)$ and define $\xi:Y_0\to Y_\alpha$ by 
 $\xi(\rho,\omega)=(\rho,\omega+\alpha\log\rho)$.
 Observe that in polar coordinates $\Phi$ becomes $(r,\theta)\mapsto(r^2/2,2\theta-\pi)$,
 whereas the actions are 
 \begin{equation}
 h_t[(\rho,\omega)]=(e^{2t}\rho,\omega+2\alpha t),
 \qquad
  h_t.(r,\theta)=(e^tr,\theta+\alpha t).
\label{actions36}
\end{equation}
The verification of  (OM1), (OM2)  and (OM3) is now straightforward. A tedious but direct computation shows that $J\xi\equiv1$. Thus we may restrict ourselves to the case $\alpha=0$ and for simplicity we write $H$, $X$, $Y$ and so on, without the index $\alpha=0$.

From \eqref{actions36}, with $\alpha=0$, it is clear that the orbits in $Y$
are rays in $\R^2$ issuing from the origin, hence locally closed.
They are parametrized by the angle $\omega=2\theta+\pi$ with  $\theta\in
(-\pi/2,\pi/2]=:Z$ and $\lambda$ is the Lebesgue measure $d\theta$. For each $\theta$, we pick as origin of the corresponding
orbit the point $o(\theta)=\frac{1}{2}(\cos(2\theta+\pi),\sin(2\theta+\pi))$, whose fiber in $X$ is the pair
$\Phi^{-1}(o(\theta))=\{\pm(\cos\theta,\sin\theta)\}$. The group is reproducing by Theorem~\ref{th:suffnec} because it is non unimodular and the stabilizer is a singleton for every $y\in Y$.

Now we use Theorem \ref{th:corollary 4} to determine the admissible
vectors. A function $\eta\in L^2(\R^2)$ is an admissible vector for
$G_0$ if and only if for almost every $\theta\in (-\pi/2,\pi/2]$
\[
\int_{\R}|\eta(\pm e^{-t} (\cos\theta,\sin\theta))|^2\,e^{2t}\,dt=1,
\qquad
\int_{\R}\eta(e^{-t}(\cos\theta,\sin\theta)\overline{-\eta(e^{-t}(\cos\theta,\sin\theta)}\,e^{2t}\,dt=0.
\]
A final change of variable $r=e^{-t}$ yields
\[
\int_{\R_+}\frac{|\eta(r\cos\theta,r\sin\theta)|^2}{r^3}\,dr=1
\]
for almost every $\theta\in (-\pi,\pi]$, and
\[
\int_{\R_+}\frac{\eta(r\cos\theta,r\sin\theta)\overline{\eta(r\cos(\theta+\pi),r\sin(\theta+\pi))}}{r^3}\,dr=0
\]
 for almost every $\theta\in (-\pi/2,\pi/2]$.
 
\paragraph{ (3.7), $\alpha\in[0,+\infty)$} The elements of
$G_\alpha:=\Sigma_2^\perp\rtimes H_\alpha$, with
$H_\alpha=\{h_t:=e^tA_{\alpha t} :t\in\R\}$, are
\begin{equation}
(u,v;t):=
\begin{bmatrix} e^tA_{\alpha t}&  0 \\
  [\begin{smallmatrix}
    u & v \\ v & u
  \end{smallmatrix}]e^tA_{\alpha t}
&  e^{-t}A_{-\alpha t}\end{bmatrix}.
\label{p2}\end{equation} 
The Haar measure on $H_\alpha$ is the pushforward
of the Lebesgue measure $dt$ under $t\mapsto h_t$.
The contragradient action~\ref{semidirect} of $H_\alpha$ on $\R$ is $h_t[y]=e^{2t}A_{2\alpha t}\,y$ so that
 $\beta(t)=e^{2t}$ and $\chi(t)=e^{-4t}$. The interwining is
 $\Phi(x)=(-(x_1^2+x_2^2)/2,-x_1x_2)$, with Jacobian $J\Phi(x)=|x_1^2-x_2^2|$. Hence we set 
 $X_\alpha=\set{(x_1,x_2)\in\R^2: x_2\neq\pm x_1}$ and 
$Y_\alpha= \set{(y_1,y_2)\in\R^2: y_1<0,\,|y_2|< |y_1|}$. The representation is
\[
U_{(u,v;t)}f(x_1,x_2)=e^{-t}e^{\pi  i(u(x_1^2+x_2^2)+2vx_1x_2)}f(e^{-t}A_{-\alpha t}(x_1,x_2)).
\]
As before these groups are mutually orbitally equivalent. The proof is as in the
previous example by replacing polar coordinates with hyperbolic
ones. Note that  $X$  has now four connected components, each of them
is parametrized by  a pair of hyperbolic variables and is mapped one-to-one
onto $Y$.  We leave the explicit computation to the interested reader. Hence
we shall only treat the case $\alpha=0$.

The orbits in $Y$
are rays in $\R^2$ issuing from the origin with slope between 
$(3/4)\pi$ and  $(5/4)\pi$. Each orbit is  locally
closed and we choose as origin the point $o(\theta)= -\frac12(1,\sin(2\theta))$ with
$\theta\in (-\pi/4,\pi/4)$. Hence $Z=(-\pi/4,\pi/4)$ and $\lambda$ is
the Lebesgue measure $d\theta$. For each $\theta$ the
corresponding fiber $\Phi^{-1}(o(\theta))$ has four points
$\{T_i (\cos\theta,\sin\theta):1\leq i\leq4\}$, where
\[
T_1=I_2,
\quad
T_2=\begin{bmatrix}0&1\\1&0\end{bmatrix},
\quad
T_3=-I_2,
\quad
T_4=-T_2. 
\]
The group
is reproducing by Theorem~\ref{th:suffnec} because it is non
unimodular and all stabilizers are singletons. 

Now we use Theorem \ref{th:corollary 4} to determine the admissible
vectors. A function $\eta\in L^2(\R^2)$ is an admissible vector for
$G_0$ if and only if for almost every $\theta\in (-\pi/4,\pi/4)$ 
\begin{align*}
  & \int_{\R}|\eta(e^{-t}T_i (\cos\theta,\sin\theta))|^2\,e^{2t}\,dt
  =\cos(2\theta),\qquad  1\leq i\leq4\\
  & \int_{\R}\eta(e^{-t}T_i (\cos\theta,\sin\theta))
\overline{\eta(e^{-t}T_j(\cos\theta,\sin\theta)))}\,e^{2t}\,dt=0
  \quad 1\leq i<j\leq 4.
\end{align*}
A final change of variable $r=e^{-t}$ yields
\begin{align*}
&\int_{\R_+}\frac{|\eta(T_i(r\cos\theta,r\sin\theta))|^2}{r^3}\,dr=\abs{\cos(2\theta)},
\qquad1\leq i\leq4\\
&\int_{\R}\frac{\eta(T_i (r\cos\theta,r\sin\theta))
\overline{\eta(T_j (r\cos\theta,r\sin\theta))}}{r^3}\,dr=0
  \quad 0\leq i<j\leq 3
\end{align*}
 for almost every $\theta\in (-\pi/4,\pi/4)$.

\paragraph{ (3.9)} The elements of  $G:=\Sigma_3^\perp\rtimes H$,
with $H=\{h_t:=e^t[
\begin{smallmatrix}
  1 & t \\
 0 & 1
\end{smallmatrix}]
:t\in\R\}$, are 
\begin{equation}
(u,v;t):=
\begin{bmatrix}  e^t & te^t & 0 & 0 \\
  0 & e^t & 0 & 0 \\
 0 &  v e^t &  e^{-t}  & 0    \\
 v e^t & (v+ut)e^t & -t e^{-t} & e^{-t} 
\end{bmatrix}.
\label{p3}\end{equation}
The Haar measure on $H$ is the pushforward
of the Lebesgue measure $dt$ under $t\mapsto h_t$.
The contagradient action~\ref{semidirect}  of $H$ on $\R$ is $h_t[y]=e^{2t}[
\begin{smallmatrix}
  1 & 0 \\
 2t & 1
\end{smallmatrix}]\,y$ so that
 $\beta(t)=e^{2t}$ and $\chi(t)=e^{-4t}$. The intertwining map  is
 $\Phi(x)=(-x_2^2/2,-x_1x_2)$, with Jacobian $J\Phi(x)=x_2^2$. Hence  we define
 $X=\set{(x_1,x_2)\in\R^2: x_2\neq 0}$ and 
$Y= \set{(y_1,y_2)\in\R^2: y_1<0}$. The representation is
\[
U_{(u,v;t)}f(x)=e^{-t}e^{\pi i(u x_2^2+ 2vx_1x_2)}f(e^{-t} (x_1 -tx_2),e^{-t}x_2).
\]
The orbits in $Y$ are curves  whose parametric equation $\gamma_z:\R\to Y$ is
$\gamma_z(t)=- e^t (t,1)\frac{z^2}{2}$ where $z\in (0,+\infty)$
labels the corresponding origin $o(z)=(-\frac{z^2}{2},0)$. Clearly,
the orbits are locally closed, we can set $Z=(0,+\infty)$ and $\lambda$ is
the Lebesgue measure $dz$. For each $z$ the
fiber $\Phi^{-1}(o(z))$ has two points, namely
$(0,\pm z) $. The group
is reproducing by Theorem~\ref{th:suffnec} because it is non
unimodular and all stabilizers are singletons. 

Now we use Theorem \ref{th:corollary 4} to determine the admissible
vectors. A function $\eta\in L^2(\R^2)$ is an admissible vector for
$G$ if and only if for almost every $z\in (0,+\infty)$ 
\begin{align*}
 \int_{\R}|\eta( \pm e^{-t} z(-t,1) )|^2\,e^{2t}\,dt=z^2, \qquad
   \int_{\R}\eta(  e^{-t} z(t,1) )
\overline{\eta(- e^{-t} z(t,1) )}\,e^{2t}\,dt=0. 
\end{align*}
A final change of variable $r=ze^{-t}$ yields
\begin{align*}
  & \int_{\R_+}\dfrac{\left|\eta( \pm (r\log\frac{r}{z},r))
    \right|^2}{r^3}\,dr= 1 \\
   & \int_{\R_+}\frac{ \eta(r\log\frac{r}{z},r)
\overline{\eta( - r\log\frac{r}{z},-r)}}{r^3}\,dr=0 
\end{align*}
 for almost every $z\in (0,+\infty)$.

\subsection{Four dimensional groups, $n=2$}

We  discuss all the details relative to  the groups (4.1) and (4.4), and then give  a quick overview of the other cases. The reason for this choice is that example~(4.1) exhibits new interesting structural features, whereas example~(4.4) is nothing else but the shearlet groups  and is thus a relevant test of our approach.
\paragraph{ (4.1)} The elements of  $G:=\Sigma_3\rtimes H$,
with $H=\{h_{a,b,c}:=[
\begin{smallmatrix}
  a & 0 \\
  b & c
\end{smallmatrix}]
:a,b,c\in\R,\, a>0,c>0\}$ are
\begin{equation}
(u;a,b,c):=
\begin{bmatrix}  a & 0 & 0 & 0 \\
  b & c & 0 & 0 \\
 u a &  0  &  a^{-1}  &  -b a^{-1} c^{-1}    \\
 0 & 0 &   0 & c^{-1} 
\end{bmatrix}.
\label{p4}\end{equation}
The Haar measure on $H$ is the pushforward
of the  measure $a^{-1}c^{-3}dadbdc$ under $(a,b,c)\mapsto h_{a,b,c}$
and the modular function is $\Delta_H(a,b,c)=ac^{-1}$.
The contragradient action of $H$ on $\R$ is $h_{a,b,c}[y]= a^2\,y$ so that
 $\beta(a,b,c)=ac$, $\chi(t)=a^{-2}$. The interwining is
 $\Phi(x)=-x_1^2/2$, with Jacobian $J\Phi(x)=\abs{x_1}$. Hence we set 
 $X=\{(x_1,x_2):x_1\not=0\}$ and $Y=(-\infty,0)$. 
The representation is
\[
U_{(u;a,b,c)}f(x)=(ac)^{-\frac12}e^{\pi iu x_1^2}f(a^{-1}x_1,c^{-1}(x_2-ba^{-1}x_1)).
\]
Clearly, the contragradient action \eqref{semidirect} is transitive on $Y$, whose origin is chosen to be $o=-1/2$ with stabilizer
$H_o=\{(1,b,c):b\in\R,c>0\}$, clearly isomorphic to the affine group, hence not compact.
Therefore we shall use Theorem~\ref{adm-vec1} in order to show that
$G$ is reproducing, and then we describe the admissible vectors. Now,
the relevant fiber is $\Phi^{-1}(o)=\{(\pm1,x_2):x_2\in\R\}$ with measure $\nu_o=\delta_++\delta_-$, where $\delta_\pm$ is the push-forward of one-dimensional Lebesgue measure to the straight line $\{(\pm1,x_2)\}$.
We shall therefore identify $L^2(X,\nu_o)$ with $L^2(\R,dx_2)\oplus L^2(\R,dx_2)$ and write $f_\pm(x_2):=f(\pm1,x_2)$. 

A smooth section $q:Y\to H$ is $q(y)=h_{\sqrt{2|y|},0,1}$. Because of Weil's integral 
formula~\eqref{weil}, this forces the  Haar measure of $H_o$  to be $db\,dc/c^2$. Next,
\[
S\eta_\pm(y,x_2)=(2|y|)^{-1/4}\eta(\pm\sqrt{2|y|},x_2).
\]
We may now apply Theorem~\ref{adm-vec1}. The right-hand side of \eqref{SECOND} is
\begin{align}
&\int_{\R_-}\int_{\R\times\R_+}
\left|\sum_{\pm}\int_\R u_\pm(x_2)(2|y|)^{-1/4}\bar\eta\bigl(\pm\sqrt{2|y|},\frac{x_2\mp b}{c}\bigr)\,dx_2\right|^2
\,\frac{db\,dc}{c^3}\frac{1}{(2|y|)^{3/2}}\,dy\nonumber   \\
&\stackrel{x_1=\sqrt{2|y|}}{=}
\int_{\R_+}\int_{\R\times\R_+}
\left|\sum_{\pm}\int_\R u_\pm(x_2)\bar\eta\bigl(\pm x_1,\frac{x_2\mp b}{c}\bigr)\,dx_2\right|^2
\,\frac{dx_1\,db\,dc}{x_1^3c^3}  \nonumber \\
&=
\int_{\R_+}\int_{\R\times\R_+}
\left|\sum_{\pm}\langle u_\pm,\pi^\pm_{(b,c)}\eta(\pm x_1,\cdot)\rangle\right|^2
\,\frac{dx_1\,db\,dc}{x_1^3c^2}\nonumber \\
&=
\int_{\R_+}\int_{\R\times\R_+}
\left|\langle u_+,\pi^+_{(b,c)}\eta(x_1,\cdot)\rangle
+\langle Tu_-,\pi^+_{(b,c)}T\eta(-x_1,\cdot)\rangle\right|^2
\,\frac{dx_1\,db\,dc}{x_1^3c^2},\label{DW}
\end{align}
where $\pi^{\pm}$ are the standard reproducing representations of the  affine group on $L^2(\R,dx)$
\begin{equation}
\pi^{\pm}_{(b,c)}f(x)=c^{-1/2}f\bigl(\frac{x\mp b}{c}\bigr),
\label{yetanotherwavelet}
\end{equation}
which are mutually equivalent under the intertwining isometry $Tf(x)=f(-x)$. Separating negative and positive  frequencies, the corresponding  orthogonality relations read
\begin{align*}
\int_{\R\times\R_+}\langle u,\pi^+_{(b,c)}v\rangle\langle \pi^+_{(b,c)}v',u'\rangle\,\frac{db\,dc}{c^2}
&=\int_{\R_+}\hat u(\xi)\bar{\hat u}'(\xi)\,d\xi
\int_{\R_+}\frac{\bar{\hat v}(\xi)\hat v'(\xi)}{\xi}\,d\xi\\
&+\int_{\R_+}\hat u(-\xi)\bar{\hat u}'(-\xi)\,d\xi
\int_{\R_+}\frac{\bar{\hat v}(-\xi)\hat v'(-\xi)}{\xi}\,d\xi .
\end{align*}
Upon selecting $u_-=0$, or $u_+=0$, respectively, we get the  following four admissibility conditions
\begin{equation}
\int_{\R_+\times\R_+}\frac{|{\mathcal F}_2\eta\bigl(\pm x_1,\pm\xi_2\bigr)|^2}{x_1^3\,\xi_2}\,dx_1\,d\xi_2=1,
\label{double1}
\end{equation}
where the signs may vary independently. Using these, the mixed terms in \eqref{DW} 
must therefore vanish, and thus yield the remaining two admissibility conditions
\begin{align}
&\int_{\R_+\times\R_+}\frac{{\mathcal F}_2\eta\bigl(x_1,\xi_2\bigr)\overline{{\mathcal F}_2\eta}\bigl(-x_1,-\xi_2\bigr)}{x_1^3\,\xi_2}\,dx_1\,d\xi_2=0\label{double21}\\
&\int_{\R_+\times\R_+}\frac{{\mathcal F}_2\eta\bigl(x_1,-\xi_2\bigr)\overline{{\mathcal F}_2\eta}\bigl(-x_1,\xi_2\bigr)}{x_1^3\,\xi_2}\,dx_1\,d\xi_2=0.\label{double22}
\end{align}
Theorem~\ref{adm-vec1} tells us that $G$ is reproducing if and only if there exist vectors $\eta\in L^2(\R^2)$ satisfying all the above admissibility conditions. It is easy to se that this is the case. First of all, factor 
$\eta(x_1,x_2)=\eta_1(x_1)\eta_2(x_2)$. Then \eqref{double1} becomes
\[
\int_{\R_+}\frac{\eta_1\bigl(\pm x_1\bigr)|^2}{x_1^3}\,dx_1
\int_{\R_+}\frac{|\hat\eta_2\bigl(\pm\xi_2\bigr)|^2}{\xi_2}\,d\xi_2
=1,
\]
and may be met by taking for $\eta_2$ a  usual wavelet, and then by choosing $\eta_1$ to satisfy
\[
\int_{\R_+}\frac{|\eta_1\bigl(\pm x_1\bigr)|^2}{x_1^3}\,dx_1
=1.
\]
As for \eqref{double21} and \eqref{double22}, it is enough to require
\[
\int_{\R_+}\frac{\eta_1\bigl(x_1\bigr)\bar\eta_1\bigl(-x_1\bigr)}{x_1^3}\,dx_1
=0,
\]
which can be achieved with support considerations.


\paragraph{(4.4), $\alpha\in (0,1{]}$ the  shearlet groups}
In order to comply with standard notation, we change some parametrization.
For any $\gamma\leq 1/2$ we set $G_\gamma:=\Sigma_3^\perp\rtimes H_\gamma$, where 
\begin{equation}
\Sigma_3= \bigl\{\left[\begin{smallmatrix}
                                         0 & v/2\\
                            v/2 & u \\
\end{smallmatrix}\right]\bigr\} \qquad
H_\gamma=\bigl\{h_{a,s}:=\left[\begin{smallmatrix}
                                         a^{\frac{1}{2}-\gamma} & -s a^{-1/2} \\
                                         & a^{-\frac{1}{2}} \\
                                       \end{smallmatrix}\right]
:a>0,s\in\R\bigr\} \qquad 
\label{shearmatrices}
\end{equation}
and the action of $H_\gamma$ on the normal abelian factor $\R^2$ is
\begin{equation}
h_{a,s}^\dagger[(u,v)]=\begin{bmatrix}a&sa^\gamma\\0&a^\gamma\end{bmatrix}
\begin{bmatrix}u\\v\end{bmatrix},\label{eq:action-shear}
\end{equation}
which shows that $G_\gamma$ is indeed the shearlet group.

The correspondence with the notation in Theorem~\ref{main} is 
\[
\gamma=(1+2\alpha)/2(\alpha+1) \qquad a=e^{t/(\gamma-1)}
\]
and the standard shearlet group with parabolic scaling corresponds to~$\gamma=1/2$, i.e., $\alpha=0$.  As explained in Part I, we require
that $\gamma\leq1/2$ since the groups with
$\gamma\in(1/2,+\infty)$ are conjugate via a Weyl-group matrix to
those corresponding to $\gamma\in(-\infty,1/2)$.  However, the
description of the admissible vectors that follows holds for all
choices of $\gamma$.

First of all, we show  that there exists no orbit equivalence between any pair of different shearlet groups. We do this by finding all possible isomorphisms $\theta:H_\gamma\to H_{\gamma'}$ and then showing that the condition $\alpha(h)=\alphaÕ(\theta(h))$, which is  necessary in order to have orbit equivalence (see Remark~\ref{samealfa}), implies $\gamma=\gamma'$.   
We start by observing that each of the groups $H_\gamma$ is isomorphic to  the linear group
consisting of the matrices
\[
(a,s)=\begin{bmatrix}a^{1-\gamma}&s\\0&1\end{bmatrix},
\qquad a>0,s\in\R.
\]
They obey the product rule
$(a, s)(a', s')=(aa', s+a^{1-\gamma}sÕ)$, just as those in~ $H_\gamma$,
see~\eqref{eq:action-shear}. Fix now $\gamma\neq\gamma'$ in $(-\infty, 1/2]$.
\begin{lemma}
Any isomorphism  $\theta:H_\gamma\to H_{\gamma'}$ has the form
\[
\theta((a, s))=(a^{\frac{1-\gamma}{1-\gamma'}},ds+b\frac{a^{1-\gamma}-1}{1-\gamma})
\]
for some $b\in \R$ and some $d\neq0$.
\end{lemma}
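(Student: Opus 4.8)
The plan is to exploit the fact, recorded just before the lemma, that each $H_\gamma$ is isomorphic to the $ax+b$ group, realised as the set of pairs $(a,s)$ with $a>0$, $s\in\R$ and product $(a,s)(a',s')=(aa',s+a^{1-\gamma}s')$. First I would locate the derived subgroup, which any isomorphism must preserve. Since $(a,s)\mapsto a$ is a homomorphism onto the abelian group $\R^+$ with kernel $N=\{(1,s):s\in\R\}$, we have $[H_\gamma,H_\gamma]\subseteq N$; and the commutator $[(a,0),(1,s)]=(1,(a^{1-\gamma}-1)s)$ sweeps out all of $N$, so $N$ is exactly the derived subgroup. Hence $\theta(N)=N'$, where $N'=\{(1,S):S\in\R\}$. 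As $N\cong N'\cong(\R,+)$ and $\theta$ is a Lie group isomorphism, its restriction is a continuous additive bijection, so $\theta((1,s))=(1,ds)$ for some $d\neq0$.

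Next I would pin down the dilation part. Writing $\theta((a,0))=(A(a),S_0(a))$, I would apply $\theta$ to the conjugation identity $(a,0)(1,s)(a,0)^{-1}=(1,a^{1-\gamma}s)$. Conjugation by $(A,\cdot)$ in $H_{\gamma'}$ scales the translation parameter by $A^{1-\gamma'}$, so the image of the left-hand side reads $(1,A(a)^{1-\gamma'}ds)$, while the image of the right-hand side is $(1,d\,a^{1-\gamma}s)$. Comparing forces $A(a)^{1-\gamma'}=a^{1-\gamma}$, and since $A(a)>0$ and $1-\gamma'\neq0$ this gives $A(a)=a^{(1-\gamma)/(1-\gamma')}$; set $\kappa=(1-\gamma)/(1-\gamma')$.

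It remains to determine $S_0$. Because $\{(a,0):a>0\}$ is a subgroup and $\theta$ a homomorphism, the map $(a,0)\mapsto(a^{\kappa},S_0(a))$ must respect the product; using $A(a)^{1-\gamma'}=a^{1-\gamma}$ this yields the cocycle relation $S_0(aa')=S_0(a)+a^{1-\gamma}S_0(a')$. Exploiting the symmetry $aa'=a'a$ I would rewrite this as $S_0(a)(1-a'^{1-\gamma})=S_0(a')(1-a^{1-\gamma})$, so that $S_0(a)/(1-a^{1-\gamma})$ is constant for $a\neq1$; calling that constant $-b/(1-\gamma)$ gives $S_0(a)=b(a^{1-\gamma}-1)/(1-\gamma)$ (with $S_0(1)=0$ automatic). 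Finally I would assemble everything from the factorisation $(a,s)=(a,0)(1,a^{\gamma-1}s)$: applying $\theta$, multiplying out in $H_{\gamma'}$, and simplifying $(a^{\kappa})^{1-\gamma'}=a^{1-\gamma}$ collapses the translation contribution to exactly $ds$, producing $\theta((a,s))=(a^{(1-\gamma)/(1-\gamma')},\,ds+b(a^{1-\gamma}-1)/(1-\gamma))$, as claimed.

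The conceptual steps (derived subgroup preserved, conjugation fixing the exponent $\kappa$) are short; the only genuine work is the cocycle equation for $S_0$, and that is the step I expect to be the crux. It becomes painless once one notices the symmetrisation trick, but it is the one place where the specific one-parameter structure of $H_\gamma$ enters through the weight $a^{1-\gamma}$, and where a careless approach would instead demand solving an inhomogeneous functional equation by differentiation. A minor point worth flagging is that the reduction $\theta((1,s))=(1,ds)$ uses continuity of $\theta$ on $N$; this is legitimate since $\theta$ is assumed to be a Lie group isomorphism, but it should be stated, as abstract additive automorphisms of $\R$ need not be linear.
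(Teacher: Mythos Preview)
Your argument is correct and complete. The identification of the derived subgroup, the conjugation computation fixing the exponent $\kappa=(1-\gamma)/(1-\gamma')$, the symmetrisation trick for the cocycle, and the final assembly via $(a,s)=(a,0)(1,a^{\gamma-1}s)$ all check out.

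The route, however, differs from the paper's. The paper works at the Lie algebra level: it writes down generators $X,Y$ of $\mathrm{Lie}(H_\gamma)$ with $[X,Y]=(1-\gamma)Y$, represents $d\theta$ as a $2\times2$ matrix in the bases $\{X,Y\}$, $\{X',Y'\}$, and uses the bracket relation to force the matrix to be upper triangular with prescribed $(1,1)$-entry $(1-\gamma)/(1-\gamma')$. It then exponentiates the one-parameter subgroups to read off $\theta$ on $(a,0)$ and $(1,s)$ separately and combines them. Your approach stays entirely at the group level: the derived subgroup replaces the bracket computation, conjugation replaces the adjoint action, and the cocycle equation replaces the exponential of an upper-triangular matrix. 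Your method is arguably more elementary---no Lie theory beyond ``continuous homomorphisms of $\R$ are linear''---while the paper's is more systematic and would scale more readily to higher-dimensional analogues. Both reach the same formula with comparable effort; your symmetrisation step is slicker than solving the analogous functional equation by differentiation, and the paper's exponentiation avoids the cocycle altogether.
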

\begin{proof}
In the course of the proof, we write $\delta=1-\gamma$. By taking derivatives at $t=0$ of the one-parameter subgroups
$\{(e^t,0)\}$ and $\{(1,t)\}$, we recognize at once that  the Lie algebra ${\rm Lie}(H_\gamma)$ has generators
\[
X=\begin{bmatrix}\delta&0\\0&0\end{bmatrix},
\qquad
Y=\begin{bmatrix}0&1\\0&0\end{bmatrix},
\]
and their  bracket is $[X,Y]=\delta$.  Similarly,  the Lie algebra ${\rm Lie}(H_{\gamma'})$ has generators
given by 
$X'=\left[\begin{smallmatrix}\delta'&0\\0&0\end{smallmatrix}\right]$ and $Y'=Y$ with bracket $[X',Y']=\delta'$. 
Let $d\theta$ denote the differential of a Lie group isomorphism $\theta:H_\gamma\to H_{\gamma'}$, and  denote by $\left[\begin{smallmatrix}a&b\\c&d\end{smallmatrix}\right]$ the matrix representing $d\theta$ in the bases $\{X,Y\}$ and $\{X',Y'\}$. Thus $ad-bc\neq0$ and  since $d\theta$ is a Lie algebra homomorphism
\[
\delta(cX'+dYÕ)=\delta d\theta(Y)=d\theta([X,Y])=[aX'+bYÕ,cX'+dYÕ]=(ad-bc)\delta'YÕ.
\]
This forces $c=0$ and $\delta=a\delta'$. Therefore
\[
d\theta=\begin{bmatrix}\delta/\delta'&b\\0&d\end{bmatrix},
\qquad
d\neq0, b\in\R.
\]
We thus have
\begin{align*}
\theta(\exp sY)&=\theta(\begin{bmatrix}1&s\\0&1\end{bmatrix})
=\exp(s\,d\theta (Y))=\exp(sdYÕ)=\begin{bmatrix}1&sd\\0&1\end{bmatrix}\\
\theta(\exp tX)&=\theta(\begin{bmatrix}e^{t\delta}&0\\0&1\end{bmatrix})
=\exp(t\,d\theta (X))=\exp(t\begin{bmatrix}(\delta/\delta')\delta&b\\0&1\end{bmatrix}))
=\begin{bmatrix}(e^{t\delta})^{\delta/\delta'}&b\left(\frac{e^{t\delta}-1}{\delta}\right)\\0&1\end{bmatrix}.
\end{align*}
This means that $\theta((1,s))=(1,ds)$ and, with $a=e^t$,
$\theta((a,0))=(b(a^\delta-1)/\delta,a^{\delta/\delta'})$.
The result follows from $\theta((a,s))=\theta((1,s))\theta((0,a))$.
\end{proof}
We now come back to the issue of orbit equivalence.We know  that ,
under the correspondence $(a,s)\leftrightarrow h$ we have
$\alpha(h)=a^{\gamma+1}=a^{2-\delta}$.  Since
$\alpha(h)=\alpha'(\theta(h))$ is  assumed to be valid for every $h\in
H_\gamma$, then 
$a^{2-\delta}=(a^{\delta/\delta'})^{2-\delta'}$ for all $a>0$, and this implies $\delta=\delta'$.
\vskip0.1truecm

Our next goal is to prove that $G_\gamma$ is reproducing.
The Haar measure on $H_\gamma$ is the pushforward
of the measure $a^{\gamma-2}dads$ under $(a,s)\mapsto h_{a,s}$.
The contragradient action of $H_\gamma$ on $\R^2$ is 
\begin{equation}\label{eq:actionshear}
    h_{a,s}[(y_1,y_2)]=\begin{bmatrix}a^{-1}&0\\-sa^{-1}&a^{-\gamma}\end{bmatrix}
\begin{bmatrix}y_1\\ y_2\end{bmatrix}=
\begin{bmatrix}a^{-1}\,y_1\\ a^{-\gamma}(y_2-y_1sa^{\gamma-1})\end{bmatrix}
\end{equation}
so that  $\beta(a,s)=a^{-\gamma}$, $\chi(a,s)=a^{(\gamma+1)}$. The intertwining map  is defined on the whole of  $\R^2 $ by 
 $\Phi(x_1,x_2)=-\frac{1}{2}(x_2^2,x_1x_2)$ and has Jacobian
 $J(\Phi)(x)=x_2^2/2$. We set $X=\{x\in\R^2:x_2\neq 0\}$
 and $Y=\{(y_1,y_2):y_1< 0,y_2\in\R\}$. The representation is
\[
U_{(u,v;a,s)}f(x_1,x_2)= a^{-\frac{\gamma}{2}}   e^{\pi i(u x_2^2+ vx_1x_2)}f(
h_{a,s}^{-1}(x_1,x_2)).
\]
By~\eqref{eq:actionshear} the action~\eqref{eq:actionshear} on $Y$ is transitive and the
stability subgroups are singleton.   The group
is reproducing by Theorem~\ref{th:suffnec}. 

Our next goal is to compute the admissible vectors. In order to illustrate the two possible approaches arising (in this case) from the general theory, we  shall perform the computation both ways:
via Theorem \ref{th:corollary 4} and via Theorem~\ref{th:9}. 

\vskip0.1truecm
{\bf First method.}
Clearly  $Z=\set{0}$ and $\lambda$ is $\delta_0$.  We select the
origin  $o=(-1/2,0)$. Evidently, $\Phi^{-1}(o)=\{\pm
(0,1)\}$. Condition $\eqref{eq:corollary 4}$  becomes
\[
\frac{1}{2}=\int_{\R^2_r}\abs{\eta(\pm\,h_{a,s}^{-1}\,(0,1))}^2\,a^{\gamma-3}\,dads=
\int_{\R^2_r}\abs{\eta(\pm\,(sa^{\gamma-1/2},a^{1/2}) )}^2\,\,a^{\gamma-3}\,dads.
\]
For simplicity, we introduce the maps $\Gamma^\pm\colon\R^2_r\to\R^2_\pm$ defined by $\Gamma^\pm(a,s)=\pm\,(sa^{\gamma-1/2},a^{1/2})$, whose Jacobians are $J(\Gamma^\pm)(a,s)=a^{\gamma-1}/2$. Here  $\R^2_\pm=\{(x_1,x_2)\in\R^2:\pm x_2>0\}$. The above relation can thus be rewritten as
\[
\frac{1}{2}
=\int_{\R^2_r}\abs{\eta(\Gamma^\pm(a,s) )}^2\;\frac{2J(\Gamma^\pm)(a,s)}{(a^{1/2})^4}\,dads
=2\int_{\R^2_\pm}\frac{\abs{\eta( x_1,x_2 )}^2}{x_2^4}\,dx_1dx_2.
\]
Similarly, condition $\eqref{eq:corollary 4}$ for the pair $(P_1,P_2)$ yields
\[
0=\int_{\R^2_+}\frac{\eta(x_1,x_2)\overline{\eta(-x_1,-x_2)}}{x_2^4}\,dx.
\]
In conclusion, a function $\eta\in L^2(\R^2)$ is an admissible vector for $G_\gamma$ if and only if
\begin{equation}
\int_{\R^2_\pm}\frac{\abs{\eta( x_1,x_2 )}^2}{x_2^4}\,dx_1dx_2=\frac{1}{4}\,,\qquad
\int_{\R^2_+}\frac{\eta(x_1,x_2)\overline{\eta(-x_1,-x_2)}}{x_2^4}\,dx_1dx_2=0.
\label{shearlets}
\end{equation}

{\bf Second method.}
Next, we re-obtain the above conditions by applying Theorem~\ref{th:9}, that is, by decomposing  the restriction of the metaplectic representation to $G_\gamma$ into its irreducible components. 
The first step of this process consists in applying Proposition \ref{prop:dis1}, thereby obtaining the disintegration formula $dx=\int_Y \nu_y\,dy$ for the measure $dx$ on $X$. For $y\in Y$ we have $\Phi^{-1}(y)=\{P_1^y,P_2^y\}$ with $P_1^y=\sqrt2(y_2/\sqrt{|y_1|},\sqrt{|y_1|})$ and $P_2^y=-P_1^y$. To compute the measures $\nu_y$ we use  (A.9) in \cite{dede10} recalling that $\nu_y$ is concentrated on $\{P_1^y,P_2^y\}$, and we obtain
\[
\nu_y(P_i^y)=\frac{1}{J\Phi(P_i^y)}=\frac{1}{|y_1|}.
\]
In particular, for each of the inverse images $P^o_i$ of the origin $o$ we have  $\nu_0(P_i)=2$.
As in Section~\ref{sub:analytic} we write $f=\int_Y f_y\,dy$ for every $f\in L^2(X,dx)=\int_Y L^2(X,\nu_y)\,dy$.
Since $Z=\{0\}$ the second disintegration discussed in Proposition~\ref{prop:dis2} is trivial and we simply get $d\tau=dy$.

The second step is to write the operator $S$ that intertwines the metaplectic representation with the relevant induced representation.  An easy computation shows that the map
\[
q\colon Y\to H_\gamma, \quad q(y)=h_{\frac{1}{2|y_1|},\frac{y_2}{|y_1|}}
\]
satisfies $q(y)[o]=y$ and so $q(o)=I$. The map $q$ gives a diffeomorphism between $Y$ and $H_\gamma$ and has Jacobian $Jq(y)=1/2|y_1|^{3}$. By \eqref{weil} we obtain for $\varphi\in C_c(H)$
\begin{align*}
\int_Y \varphi (q(y))\operatorname{vol}(H_o) \,dy
&=\int_{\R^2_r} \varphi(h_{a,s})\alpha(h_{a,s})^{-1}a^{\gamma-2}\,dads \\
&=\int_{\R^2_r} \varphi(h_{a,s})a^{-3}\,dads \\
&=\int_{Y} \varphi(q(y))8|y_1|^3Jq(y)\,dy\\
&=4\int_{Y} \varphi(q(y))\,dy,
\end{align*}
whence $\operatorname{vol}(H_o)=4$. Since $\nu_o$ is concentrated on $\{P_1^o,P_2^o\}$, an orthonormal basis for $L^2(X,\nu_o)$ is $\{\chi_1/\sqrt2,\chi_2/\sqrt2\}$ where $\chi_i$ is the indicator function of $P_i^o$.
Hence the components of the map $S\colon L^2(X,dx)\to L^2(Y,dy;L^2(X,\nu_o))$ defined in \eqref{eq:Sz2} are
\[
(S\eta)_i(y)=\sqrt{\alpha(q(y))\beta(q(y))}\eta_y(q(y).P_i^o)=(2|y_1|)^{-1/2}\eta_y(P_i^y).
\]
where  $\eta=\int_Y \eta_y\,dy \in L^2(X,dx)$.
We now use the decomposition discussed in Section~\ref{sub:compact}. Since the stabilizer $H_o=\{I\}$ is trivial, the representation $\pi$ of $H_o$ on $L^2(X,\nu_0)$  decomposes as
\[
\pi=2 \,{\rm id}_\C={\rm id}_\C\oplus{\rm id}_\C,\qquad L^2(X,\nu_0)=2\,\C=\C\oplus\C.
\]
Looking at \eqref{eq:decomp_Sf}, we have $F^i=\sqrt{2}(S\eta)_i$ because $\{\chi_1/\sqrt2,\chi_2/\sqrt2\}$ is an orthonormal basis.
Finally, we are in a position to apply Theorem \ref{th:9}. Take $\eta\in L^2(X,dx)$ and write $F^i$ as above. Relation \eqref{eq:th9} becomes
\[
\begin{split}
\frac{\delta_{ij}}{4}&= \intl_Y \scal{F^{i}(y)}{F^{j}(y)} \frac{\alpha(q(y))}{\Delta_H(q(y))}\,dy\\
&=2\intl_Y \scal{(S\eta)_i(y)}{(S\eta)_j(y)} |2y_1|^{-2}\,dy\\
&=2\intl_Y \scal{\eta_y(P_i^y)}{\eta_y(P_j^y)} |2y_1|^{-3}\,dy.
\end{split}
\]
If $i=j$, with the change of variable $y=\Phi(x)$ we get 
\[
\frac{1}{4}=2\intl_{\R^2_\pm} \abs{\eta( x_1,x_2)}^2 x_2^{-6}J\Phi( x_1,x_2)\,dx_1dx_2
=\int_{\R^2_\pm}\frac{\abs{\eta( x_1,x_2 )}^2}{x_2^4}\,dx_1dx_2.
\]
The orthogonality condition in \eqref{shearlets} is obtained similarly.

\paragraph{ (4.2)} The elements of $G=\Sigma_1^\perp\rtimes
\{h_{t,\theta}:=e^tR_\theta :t\in\R,\,\theta\in [0,2\pi)\}$ are
\begin{equation*}
(u,v;t):=
\begin{bmatrix} e^tR_{\theta}&  0 \\
  [\begin{smallmatrix}
    u & v \\ v & -u
  \end{smallmatrix}]e^tR_{\theta}
&  e^{-t}R_{\theta}\end{bmatrix}.
\end{equation*}
The Haar
measure on $H$ is the pushforward
of the  measure $dtd\theta/2\pi$ under $(t,\theta)\mapsto h_{t,\theta}$
and $H$ is unimodular.
The contragradient action of $H$ on $\R^2$ is $h_{t,\theta}[y]=
e^{2t}R_{2\theta}\,y$ so that
$\beta(t,\theta)=e^{2t}$ and $\chi(t,\theta)= e^{-4t}$. The interwining is
$\Phi(x)=((x_2^2-x_1^2)/2,-x_1x_2)$ with Jacobian $J\Phi(x)=\|x\|^2$.
Hence we set
$X=\R^2\setminus \{(0,0)\}$ and $Y_\alpha=\R^2\setminus\{(0,0)\}$.
The representation is
\[
U_{(u,v;t,\theta)}f(x_1,x_2)=e^{-t}e^{\pi  i(u(x_1^2-x_2^2)+ 2
vx_1x_2)}f(e^{-t}R_{-\theta}(x_1,x_2)).
\]
The action \eqref{semidirect} is transitive on
$Y$. We  choose as origin $o=-(1/2,0)$, whose compact stabilizer is
$H_o=\{I,R_\pi\}$ and whose fiber in $X$ is the pair
$\Phi^{-1}(o)=\{\pm(1,0)\}$.
The group is reproducing by Theorem \ref{th:suffnec}
and we use Theorem \ref{th:corollary 4} to determine the admissible
vectors. Since $-(1,0)=R_\pi (1,0)$,
a function $\eta\in L^2(\R^2)$ is an admissible vector for
$G$ if and only if
\begin{align*}
& \int_{\R\times [0,+\infty)} |\eta(e^{-t} (\cos\theta,-\sin\theta))|^2 e^{2t}
\frac{dtd\theta}{2\pi}= 1 \\
& \int_{\R\times [0,+\infty)} \eta(e^{-t}(\cos\theta,-\sin\theta))
\overline{\eta}(-e^{-t}R_{-\theta} (1,0))  e^{2t}
\frac{dtd\theta}{2}= 0
\end{align*}
The change of variables
$(x_1,x_2)=e^{-t}(\cos\theta,-\sin\theta)$, whose Jacobian is $e^{-2t}$,
yields
\begin{align*}
&\int_{\R^2}  |\eta(x)|^2 \, \frac{dx}{|x|^4}= 2\pi \\
& \int_{\R^2}  \eta(x)
\overline{\eta}(-x)
 \, \frac{dx}{|x|^4}= 0.
\end{align*}

\paragraph{ (4.3)} The elements of
$G=\Sigma_2^\perp\rtimes \{h_{t,s}:=e^tA_s :t,s\in\R\}$ are
\begin{equation*}
(u,v;t):=
\begin{bmatrix} e^tA_{st}&  0 \\
  [\begin{smallmatrix}
    u & v \\ v & u
  \end{smallmatrix}]e^tA_{s}
&  e^{-t}A_{s}\end{bmatrix}.
\end{equation*} 
The Haar
measure on $H$ is the pushforward of the measure
$dtds$ under $(t,s)\mapsto h_{t,s}$ and $H$ is unimodular.
The  action~\eqref{semidirect} of $H$ on $\R^2$ is
$h_{t,s}[y]=e^{2t}A_{2s}\,y$ so that
$\beta(t,s)=e^{2t}$ and $\chi(t,s)=e^{-4t}$. The interwining is
$\Phi(x)=(-(x_1^2+x_2^2)/2,-x_1x_2)$, with Jacobian
$J\Phi(x)=|x_1^2-x_2^2|$. 

We set
$X=\set{(x_1,x_2)\in\R^2: x_2\neq\pm x_1}$ and
$Y= \set{(y_1,y_2)\in\R^2: y_1<0,\,|y_2|< |y_1|}$, and the representation is
\[
U_{(u,v;t,s)}f(x_1,x_2)=e^{-t}e^{\pi  i(u(x_1^2+x_2^2)+2vx_1x_2)}f(e^{-t}A_{-s}(x_1,x_2)).
\]
Clearly, the  action \eqref{semidirect} is transitive
and free on
$Y$, whose origin is  $o=-(1/2,0)$ and the corresponding
fiber in $X$ is
$\Phi^{-1}(o)=\{M_i(1,0):i=0,\ldots,3\}$ where
\[
M_0=I,\quad M_1=\Omega,\quad M_2=-I,\quad M_3=-\Omega
\]
must be seen as isomorphisms of $\R^2$.

The group is reproducing by Theorem \ref{th:suffnec}
and we use Theorem \ref{th:corollary 4} to determine the admissible
vectors. Set $C=\set{(x_1,x_2): x_1>0, |x_2|<x_1}$ and observe
 that the maps $(t,s)\mapsto (x_1,x_2)=e^{-t}A_sM_i(1,0)$ are  diffeomorphisms
from $\R^2$
 onto $M_iC$  with Jacobian $e^{-2t}$. Hence, reasoning as in the
 previous example, a function $\eta\in L^2(\R^2)$ is an admissible vector for
$G$ if and only if
\begin{align*}
&\int_{M_iC}  |\eta(x)|^2 \, \frac{dx}{|x_1^2-x_2^2|^2}= 1 \qquad i=0,\ldots,3\\
& \int_{C}  \eta(M_ix)
\overline{\eta}(M_jx)
 \, \frac{dx}{|x_1^2-x_2^2|^2}= 0\qquad 0\leq i<j\leq 3,
\end{align*}
where in the second set of equations we use that $M_i$ are unimodular
maps of $C$ onto $M_iC$.

\end{document}